\title{{\bf On a computation of rank two Donaldson-Thomas invariants}}
\date{}
\author{Yukinobu Toda}
\DeclareFontFamily{U}{rsfs}{%
\skewchar\font127}
\DeclareFontShape{U}{rsfs}{m}{n}{%
<-6>rsfs5<6-8.5>rsfs7<8.5->rsfs10}{}
\DeclareSymbolFont{rsfs}{U}{rsfs}{m}{n}
\DeclareRobustCommand*\rsfs{%
\@fontswitch\relax\mathrsfs}
\theoremstyle{plain}
\newtheorem{thm}{Theorem}[section]
\newtheorem{prop}[thm]{Proposition}
\newtheorem{lem}[thm]{Lemma}
\newtheorem{defi}[thm]{Definition}
\newtheorem{rmk}[thm]{Remark}
\newtheorem{cor}[thm]{Corollary}
\newtheorem{case}{Case}
\newtheorem{prop-defi}[thm]{Proposition-Definition}
\newtheorem{thm-defi}[thm]{Theorem-Definition}
\newtheorem{lem-defi}[thm]{Lemma-Definition}
\newtheorem{exam}[thm]{Example}
\newdimen\argwidth
\def\db[#1\db]{
 \setbox0=\hbox{$#1$}\argwidth=\wd0
 \setbox0=\hbox{$\left[\box0\right]$}
  \advance\argwidth by -\wd0
 \left[\kern.3\argwidth\box0 \kern.3\argwidth\right]}
\newcommand{\aA}{\mathcal{A}}
\newcommand{\dD}{\mathcal{D}}
\newcommand{\eE}{\mathcal{E}}
\newcommand{\fF}{\mathcal{F}}
\newcommand{\hH}{\mathcal{H}}
\newcommand{\mM}{\mathcal{M}}
\newcommand{\oO}{\mathcal{O}}
\newcommand{\qQ}{\mathcal{Q}}
\newcommand{\sS}{\mathcal{S}}
\newcommand{\uU}{\mathcal{U}}
\newcommand{\xX}{\mathcal{X}}
\newcommand{\yY}{\mathcal{Y}}
\newcommand{\zZ}{\mathcal{Z}}
\newcommand{\lr}{\longrightarrow}
\newcommand{\Supp}{\mathop{\rm Supp}\nolimits}
\newcommand{\Hom}{\mathop{\rm Hom}\nolimits}
\newcommand{\Hilb}{\mathop{\rm Hilb}\nolimits}
\newcommand{\id}{\textrm{id}}
\newcommand{\Ext}{\mathop{\rm Ext}\nolimits}
\newcommand{\Spec}{\mathop{\rm Spec}\nolimits}
\newcommand{\Coh}{\mathop{\rm Coh}\nolimits}
\newcommand{\cneq}{\mathrel{\raise.095ex\hbox{:}\mkern-4.2mu=}}
\newcommand{\eqcn}{\mathrel{=\mkern-4.5mu\raise.095ex\hbox{:}}}
\newcommand{\Aut}{\mathop{\rm Aut}\nolimits}
\newcommand{\Stab}{\mathop{\rm Stab}\nolimits}
\newcommand{\Sch}{\mathop{\rm Sch}\nolimits}
\newcommand{\DT}{\mathop{\rm DT}\nolimits}
\newcommand{\groupoid}{\mathop{\rm groupoid}\nolimits}
\newcommand{\Sym}{\mathop{\rm Sym}\nolimits}
\newcommand{\Eu}{\mathop{\rm Eu}\nolimits}
\newcommand{\Quot}{\mathop{\rm Quot}\nolimits}
\newcommand{\Imm}{\mathop{\rm Im}\nolimits}
\newcommand{\GL}{\mathop{\rm GL}\nolimits}
\newcommand{\PGL}{\mathop{\rm PGL}\nolimits}
\newcommand{\tr}{\mathop{\rm tr}\nolimits}
\newcommand{\ex}{\mathop{\rm ex}\nolimits}
\newcommand{\length}{\mathop{\rm length}\nolimits}
\newcommand{\cl}{\mathop{\rm cl}\nolimits}
\begin{document}
\maketitle
\begin{abstract}
For a Calabi-Yau 3-fold $X$, 
we explicitly compute the Donaldson-Thomas type 
invariant counting pairs $(F, V)$, where
$F$ is a zero-dimensional coherent sheaf on $X$ and 
$V\subset F$ is a two dimensional linear subspace, which 
satisfy a certain stability condition. 
This is a rank two version of the DT-invariant of rank one, 
studied by 
 Li, Behrend-Fantechi and 
Levine-Pandharipande. 
We use 
the wall-crossing formula of DT-invariants 
established by Joyce-Song, Kontsevich-Soibelman.  
\end{abstract}
\section{Introduction}
The purpose of this article is to write down a 
closed formula of the generating series of certain rank 
two Donaldson-Thomas (DT) type invariants on 
Calabi-Yau 3-folds. The DT-invariant
is a counting invariant of stable coherent sheaves on $X$, and 
it is 
introduced in~\cite{Thom} in order to 
give a holomorphic analogue of the
Casson invariant on real 3-manifolds.
It is now conjectured by Maulik-Nekrasov-Okounkov-Pandharipande
(MNOP)~\cite{MNOP}
that Gromov-Witten invariants and rank one 
DT-invariants are related in terms of generating functions. 
So far, rank one DT-invariants have been
studied in several papers, e.g.
~\cite{Li}, \cite{BBr}, \cite{LP}, \cite{BeBryan}.
 
On the other hand, it seems that 
higher rank DT-invariants have not been explicitly calculated
yet in any example.
Although the rank one case is important in connection 
with MNOP conjecture, there is also some motivation of 
studying higher rank DT-invariants. For instance, 
the rank of a coherent sheaf is not preserved 
under Fourier-Mukai transformations, e.g. the 
Pfaffian-Grassmannian derived 
equivalence established in~\cite{BorCal}.
Hence in order to compare DT-invariants under Fourier-Mukai 
transformations, it seems that we also have to work with 
higher rank DT-invariants. 

Recently the wall-crossing 
formula of DT-invariants has been developed by Joyce-Song~\cite{JS}
and Kontsevich-Soibelman~\cite{K-S}. 
As pointed out in~\cite[Paragraph~6.5]{K-S}, 
certain higher rank DT-type invariants are in principle 
calculated by the wall-crossing formula, 
if we are given data for the DT-invariants of rank one. 
In this article, we work out the wall-crossing 
formula established by Joyce-Song~\cite{JS}, 
and write down the explicit formula of 
DT-type invariants counting rank two D0-D6 bound state, 
discussed in~\cite[Paragraph~6.5]{K-S}. 
We also give an evidence of the integrality 
conjecture proposed by Kontsevich-Soibelman~\cite[Conjecture~6]{K-S}.

\subsection{Rank one Donaldson-Thomas invariant}
Let $X$ be a smooth projective Calabi-Yau 3-fold 
over $\mathbb{C}$, i.e. $K_X=\wedge^{3} T_{X}^{\ast}$ is trivial
and $H^1(\oO_X)=0$. 
For $n\in \mathbb{Z}$, let $\Hilb^{n}(X)$ is the 
Hilbert scheme of $n$-points in $X$, 
\begin{align*}
\Hilb^{n}(X)&=\{Z\subset X : \dim Z=0, \
\length \oO_Z=n\}, \\ 
&=\left\{(F, v) : \begin{array}{l}
F \mbox{ is a zero-dimensional coherent sheaf on }
X \mbox{ with}\\
\mbox{length }
n, \mbox{ and }
v\in F \mbox{ generates }F \mbox{ as an }\oO_X\mbox{-module}.
\end{array}\right\}.
\end{align*}
The moduli space $\Hilb^{n}(X)$ is projective and has a 
symmetric obstruction theory~\cite{Thom}. 
By integrating the associated zero-dimensional virtual 
cycle, we can define the rank one Donaldson-Thomas (DT) invariant, 
$$\DT(1, n)=\int_{[\Hilb^{n}(X)]^{\rm{vir}}}1 \in \mathbb{Z}.$$
Another way of defining DT-invariant is to use 
Behrend's constructible function~\cite{Beh},
$$\nu \colon \Hilb^{n}(X) \to \mathbb{Z}.$$
In~\cite{Beh}, K.~Behrend shows that 
 $\DT(1, n)$ is also written as 
 $$\DT(1, n)=\int_{\Hilb^{n}(X)}\nu \ d\chi \cneq 
 \sum_{k\in \mathbb{Z}}k \chi(\nu^{-1}(k)),$$
 where $\chi(\ast)$ is the topological Euler 
 characteristic. 
 Let $\DT(1)$ be the generating series, 
 $$\DT(1)=\sum_{n\in \mathbb{Z}}\DT(1, n)q^n.$$
 The series $\DT(1)$ is 
 computed by Li~\cite{Li}, Behrend-Fantechi~\cite{BBr} and
Pandharipande-Levine~\cite{LP}.   
\begin{thm}\emph{\bf{\cite{Li}, \cite{BBr}, \cite{LP}}}
We have the following formula, 
$$\DT(1)=M(-q)^{\chi(X)}.$$
Here $M(q)$ is the MacMahon function, 
$$M(q)=\prod_{k\ge 1}\frac{1}{(1-q)^{k}}.$$
\end{thm}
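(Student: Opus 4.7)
The plan is to combine Behrend's constructible function formula with torus localisation on $\Hilb^m(\mathbb{C}^3)$, after first reducing the global problem on $X$ to a purely local one. The starting point is the identity $\DT(1,n)=\chi(\Hilb^n(X),\nu)$ recorded in the excerpt.

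First I would stratify $\Hilb^n(X)$ by support via the Hilbert--Chow morphism $\pi\colon \Hilb^n(X)\to \Sym^n(X)$. The fibre over $\sum m_i[x_i]$ is isomorphic to $\prod_i \Hilb^{m_i}(X,x_i)$, where $\Hilb^{m}(X,x)$ denotes the punctual Hilbert scheme at $x$. Since $X$ is smooth of dimension three, the formal neighbourhood of every $x\in X$ is isomorphic to $\Spec \mathbb{C}[[t_1,t_2,t_3]]$, so both the schemes $\Hilb^{m}(X,x)$ and their Behrend functions agree with those of the punctual Hilbert scheme $\Hilb^{m}(\mathbb{C}^3)_0$ at the origin. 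Using the multiplicativity of $\nu$ on products (a formal property for symmetric obstruction theories) and a G\"ottsche-type power structure identity for the weighted Euler characteristic $\chi(-,\nu)$ over the symmetric product, I would conclude
\[
\DT(1) = Z(q)^{\chi(X)}, \qquad Z(q) = \sum_{m\ge 0} \chi\bigl(\Hilb^m(\mathbb{C}^3)_0,\nu\bigr)\, q^m.
\]

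Next I would compute $Z(q)$ by torus localisation on $\Hilb^m(\mathbb{C}^3)$. The torus $T=(\mathbb{C}^\ast)^3$ acts on $\Hilb^m(\mathbb{C}^3)$ preserving the symmetric obstruction theory, with isolated fixed points indexed by 3D partitions $\pi$ of $m$ corresponding to monomial ideals $I_\pi\subset \mathbb{C}[t_1,t_2,t_3]$. The general principle that $\nu$ is constant along connected components of the fixed locus of a $T$-equivariant symmetric obstruction theory gives $\nu([I_\pi])=(-1)^{d(\pi)}$, where $d(\pi)\cneq \dim_{\mathbb{C}} T_{[I_\pi]}\Hilb^{m}(\mathbb{C}^3)$. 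Hence
\[
Z(q) = \sum_{\pi} (-1)^{d(\pi)} q^{|\pi|}.
\]

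The final step is a combinatorial parity identity $d(\pi) \equiv |\pi| \pmod 2$. Granting this, together with MacMahon's identity $\sum_{\pi} q^{|\pi|} = M(q)$ for 3D partitions, one obtains $Z(q)=M(-q)$ and therefore $\DT(1)=M(-q)^{\chi(X)}$. The main obstacle is the parity statement: it requires a careful pairing argument on the $T$-character of $T_{[I_\pi]}\Hilb^m(\mathbb{C}^3)$ via Haiman's explicit formula, together with the Calabi--Yau symmetry that pairs a character with its inverse modulo self-dual summands. The reduction from $X$ to $\mathbb{C}^3$ and the localisation to monomial ideals are essentially formal once multiplicativity of $\nu$ is in place, so the real weight of the argument sits in this parity computation.
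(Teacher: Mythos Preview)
The paper does not give its own proof of this theorem: it is quoted as a known result with references to Li~\cite{Li}, Behrend--Fantechi~\cite{BBr}, and Levine--Pandharipande~\cite{LP}, and no argument appears in the text. So there is nothing in the paper to compare your proposal against.

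That said, your outline is essentially the Behrend--Fantechi proof from~\cite{BBr}. The reduction to $\mathbb{C}^3$ via the Hilbert--Chow stratification and multiplicativity of $\nu$, the torus localisation on $\Hilb^m(\mathbb{C}^3)$ giving $\nu([I_\pi])=(-1)^{\dim T_{[I_\pi]}}$, and the parity identity $\dim T_{[I_\pi]}\equiv |\pi|\pmod 2$ are exactly the ingredients of~\cite[Theorem~4.11]{BBr} (a result the present paper in fact invokes later, in the proof of Lemma~\ref{lem:e3}). You are right that the parity step is where the genuine work lies; Behrend--Fantechi handle it by a direct computation of the $T$-character of the tangent space and a pairing argument using the Calabi--Yau symmetry. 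The other two cited proofs are genuinely different: Li~\cite{Li} works directly with the virtual class via degeneration, and Levine--Pandharipande~\cite{LP} use algebraic cobordism, neither relying on the Behrend function.
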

\subsection{Rank two Donaldson-Thomas invariant}
In this article, we consider a
rank two analogue of 
the invariant $\DT(1, n)$. 
Let $F$ be a zero-dimensional 
coherent sheaf on $X$ with length $n$, 
and $V\subset F$ is a two 
dimensional $\mathbb{C}$-vector subspace. 
We call the pair $(F, V)$ \textit{semistable} 
(resp. \textit{stable}) if it
satisfies the following 
stability condition. 
\begin{itemize}
\item The subspace $V\subset F$ generates $F$ as an 
$\oO_X$-module. 
\item For any non-zero $v\in V$, the subsheaf 
$F_v\cneq \oO_X \cdot v \subset F$ satisfies 
$$\length F_v \ge n/2, \quad (\mbox{resp. }\length F_v > n/2.)$$
\end{itemize}
We denote by $M^{(2, n)}$ the moduli space 
of semistable $(F, V)$ with $\length F=n$. 
If $n$ is odd, the space $M^{(2, n)}$ is an algebraic 
space of finite type, and the integration 
of the Behrend function yields the DT-type invariant, 
\begin{align}\label{intro:DT2n}
\DT(2, n)=\int_{M^{(2, n)}}\nu \ d\chi. 
\end{align}
When $n$ is even, the space $M^{(2, n)}$ is 
an algebraic stack, and the integration 
such as (\ref{intro:DT2n}) does not make sense. 
However we are also able to 
define the DT-type invariant, 
$$\DT(2, n)\in \mathbb{Q}, $$
when $n$ is even by using the 
technique of the Hall-algebra. 
The existence of the above $\mathbb{Q}$-valued 
invariant is one of the big achievement of 
the recent work of Joyce-Song~\cite{JS}. 
We will give a brief introduction of 
the definition of $\DT(2, n)$ in Section~\ref{sec:Hall}.
Let us consider the generating series, 
$$\DT(2)=\sum_{n\in \mathbb{Z}}\DT(2, n)q^n.$$
Applying the wall-crossing formula of DT-invariants~\cite{JS}, \cite{K-S}, 
we show the following formula. 
\begin{thm}
We have the following formula. 
\begin{align}\label{DT2}
\DT(2)=\frac{1}{4}M(q)^{2\chi(X)} -\frac{\chi(X)}{2}\{M(q)^{\chi(X)}
\cdot M(q)^{\chi(X)}\cdot N(q)\}_{\Delta},
\end{align}
where $\Delta \subset \mathbb{Z}^3_{\ge 0}$ is 
$$\Delta=\{(m_1, m_2, m_3)\in \mathbb{Z}_{\ge 0}^3 : 
-m_3 \le m_1-m_2 < m_3\}.$$
\end{thm}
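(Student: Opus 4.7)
The plan is to apply the Joyce--Song wall-crossing formula in a Hall algebra built from an auxiliary abelian category $\aA$ of triples $(F, V, \phi)$, where $F$ is a zero-dimensional coherent sheaf on $X$, $V$ a finite-dimensional $\mathbb{C}$-vector space, and $\phi\colon V \to H^0(F)$ a linear map; morphisms in $\aA$ are the obvious compatible pairs of a sheaf homomorphism and a linear map. The numerical class of $(F,V,\phi)$ is $(\length F, \dim V)$. The full subcategory with $\dim V \le 1$, equipped with the stability requirement that $\phi(V)$ generates $F$, recovers the rank one setting of Theorem~1.1, whereas the moduli space $M^{(2,n)}$ is the semistable locus of a specific weak stability condition $\sigma$ on $\aA$ in the class $(n,2)$.

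First I would introduce a one-parameter family of weak stability conditions $\sigma_t$ interpolating between $\sigma$ and an opposite chamber $\sigma_{+\infty}$ in which every semistable object of class $(n,2)$ is polystable and splits as a direct sum of two rank one objects. The invariant at $\sigma_{+\infty}$ is therefore an explicit symmetrized square of the rank one series; taking into account the Joyce--Song automorphism factor for the $\GL(2)$-stabilizer on the diagonal, together with the sign twist induced on the Behrend function by the direct-sum decomposition, should yield the first summand $\frac{1}{4}M(q)^{2\chi(X)}$ from the formula of Theorem~1.1.

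Between $\sigma_{+\infty}$ and $\sigma$, the Joyce--Song wall-crossing formula expresses the change in the DT-type invariant of class $(n,2)$ as a sum of Poisson brackets in the Hall algebra, indexed by numerical decompositions $(n,2) = (n_1,1) + (n_2,1)$ that cross a wall. The Euler pairing of $\aA$ on such a decomposition produces the overall coefficient $\chi(X)$ in the correction, the Joyce--Song symmetry factor for the rank two class provides the $1/2$, and the stability inequality $\length F_v \ge n/2$ characterizing which walls are actually crossed recovers, after reparametrization, the triangle-type support $\Delta$. The main obstacle will be the bookkeeping: evaluating each wall-crossing bracket against the Behrend-weighted motivic integration, identifying the contribution of the extension data between the two rank one summands with the third series appearing in the correction term, and matching signs and multiplicities to the stated closed form restricted to $\Delta$.
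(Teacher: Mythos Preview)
Your proposal has the right spirit—Joyce--Song wall-crossing in a category of pairs—but the specific mechanism you describe does not match what actually happens, and the gaps are not just bookkeeping.

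First, there is no chamber $\sigma_{+\infty}$ in which a semistable object of class $(n,2)$ splits as a sum of two rank-one objects. In the paper's setup the stability space has only three qualitative regions $Z_+$, $Z_0$, $Z_-$, and in the opposite chamber $Z_-$ there are \emph{no} semistable objects of mixed class $(r,n)$ with $r,n>0$ at all: the only $Z_-$-semistable classes are $(r,0)$ and $(0,n)$. So the invariant in the opposite chamber is zero, not a symmetrized square, and the term $\tfrac14 M(q)^{2\chi(X)}$ cannot arise as you describe. In the actual argument this term comes from the wall-crossing contribution of a single ``rank'' vertex of weight $2$ (using $\DT(2,0)=\tfrac14$) coupled to arbitrarily many rank-zero vertices, which exponentiate to $M(q)^{2\chi(X)}$ via the known formula for $\DT(0,n)$.

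Second, the wall-crossing sum is not indexed by two-term decompositions $(n,2)=(n_1,1)+(n_2,1)$. The Joyce--Song formula decomposes $(2,n)$ into an arbitrary ordered string of classes, each of type $(r_i,0)$ or $(0,n_i)$, connected by a tree, with combinatorial weights $u(\Lambda)$ and $s(\Lambda)$. The correction term in the theorem comes from strings with \emph{two} rank-one vertices together with a distinguished rank-zero vertex; the factor $\chi(X)$ and the series $N(q)$ both originate from the rank-zero invariant $\DT(0,n)=-\chi(X)\sum_{m\mid n}m^{-2}$, not from the Euler pairing between two rank-one pieces (that pairing is $n_2-n_1$, not $\chi(X)$). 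The region $\Delta$ then emerges from the inequalities governing when $s(\Lambda)\neq 0$, after several cancellations among the tree types. Your outline, restricted to $(1,n_1)+(1,n_2)$, cannot produce the third factor $N(q)$ and misidentifies the source of $\chi(X)$.
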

Let us explain the notation. 
The series $N(q)$ is defined by 
\begin{align*}N(q) 
&\cneq q\frac{d}{dq}\log M(q) \\
&=\sum_{r, n\ge 0, \ r|n}r^2 q^n, 
\end{align*}
and for $f_1, f_2, \cdots, f_N \in \mathbb{Q}\db[q\db]$
given by 
$$f_i=\sum_{n\ge 0}a_{n}^{(i)}q^n, \quad 1\le i\le N, $$
and a subset $\Delta \subset \mathbb{Z}^{N}_{\ge 0}$, the series 
$\{f_1 \cdot f_2 \cdots f_N\}_{\Delta}$ is defined by 
$$\{f_1 \cdot f_2 \cdots f_N\}_{\Delta}
=\sum_{(n_1, n_2, \cdots, n_N)\in \Delta}
a_{n_1}^{(1)}a_{n_2}^{(2)}\cdots a_{n_N}^{(N)}q^{n_1+n_2+\cdots +n_N}.$$
In the formula~(\ref{DT2}), we set 
$N=3$, $f_1=f_2=M(q)^{\chi(X)}$ and $f_3=N(q)$. 
\subsection{Integrality property}
Following~\cite{K-S}, we introduce the invariant 
$$\Omega(2, n)=\left\{ \begin{array}{cl}\DT(2, n), & n \mbox{ is odd,} \\
\DT(2, n)-\frac{1}{4}\DT(1, \frac{n}{2}), & n\mbox{ is even.}
\end{array}\right.
$$
We also prove an evidence of the
integrality conjecture by Kontsevich-Soibelman~\cite[Conjecture~6]{K-S}. 
\begin{thm}
We have $\Omega(2, n)\in \mathbb{Z}$ for any $n\in \mathbb{Z}$. 
\end{thm}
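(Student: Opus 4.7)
The plan is to rephrase the integrality claim as a single congruence on formal power series, then verify it by mod-$2$ arithmetic. Setting $a=\chi(X)$ for brevity, observe that for $n$ even the correction $\frac{1}{4}\DT(1,n/2)$ is precisely the $q^n$-coefficient of $\frac{1}{4}M(-q^2)^a$, since $\DT(1)=M(-q)^a$. Hence the generating series of $\Omega(2,n)$ takes the compact form
\begin{equation*}
4\Omega(2)\;=\;M(q)^{2a}-M(-q^2)^a-2a\bigl\{M(q)^a\cdot M(q)^a\cdot N(q)\bigr\}_\Delta,
\end{equation*}
and the task reduces to showing that every coefficient of this series lies in $4\mathbb{Z}$.

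Divisibility by $2$ is the easy half. Writing $M(q)^a=\sum_n c_n q^n$ with $c_n\in\mathbb{Z}$, the mod-$2$ Frobenius congruence gives $(M(q)^a)^2\equiv M(q^2)^a\equiv M(-q^2)^a\pmod 2$, so $M(q)^{2a}-M(-q^2)^a$ is visibly even, while the third summand is trivially even. To push to mod $4$, I would expand $[q^n]\bigl((M(q)^a)^2-M(-q^2)^a\bigr)$ and divide by $2$: this produces $\sum_{k<n/2}c_kc_{n-k}+\frac{1}{2}c_{n/2}(c_{n/2}-(-1)^{n/2})[2\mid n]$, with the diagonal correction always even because $c(c\pm 1)$ is. On the other side, using the $m_1\leftrightarrow m_2$ symmetry of $M(q)^a\cdot M(q)^a$ together with the identity $\{f\}_\Delta=\{f\}_{|m_1-m_2|<m_3}+\frac{1}{2}\{f\}_{|m_1-m_2|=m_3}$ for symmetric $f$, the ordered triples of $\Delta$ split into an interior part ($|m_1-m_2|<m_3$), whose off-diagonal ordered pairs contribute evenly and whose diagonal $m_1=m_2$ collapses via $c_m^2\equiv c_m$ to $\sum c_m\sigma_2(n-2m)$, plus a boundary part $m_3=m_2-m_1>0$, which is supported on $m_2=n/2$ and is therefore present only for $n$ even.

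The main obstacle is then a concrete mod-$2$ identity. For $n$ odd it takes the form
\begin{equation*}
\sum_{k<n/2}c_kc_{n-k}\;\equiv\;a\sum_{m\ge 0}c_m\,\sigma_2(n-2m)\pmod 2,
\end{equation*}
which I would attempt to deduce from the logarithmic differential relation $aM(q)^a N(q)=q\frac{d}{dq}M(q)^a$ (which rewrites the right side in terms of $nc_n$) combined with $c_m^2\equiv c_m\pmod 2$. For $n$ even the diagonal correction $\frac{1}{2}c_{n/2}(c_{n/2}-(-1)^{n/2})$ and the boundary $\Delta$-term (supported on $m_2=n/2$) must be balanced simultaneously, with the sign $(-1)^{n/2}$ coming from $M(-q^2)^a$ being precisely what the $\frac{1}{4}\DT(1,n/2)$ subtraction is designed to compensate. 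Small cases $n\le 4$ can be verified by direct computation as a sanity check, and if the algebraic matching resists closing in closed form, I would pass to induction on $n$, using the same differential identity to reduce each new case to lower-order data.
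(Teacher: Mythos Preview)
Your route is entirely different from the paper's, and in fact the paper explicitly declines to attempt it: the author writes that the integrality ``seems not obvious from the explicit formula'' and instead gives a \emph{geometric} proof working directly from the Hall-algebra definition of $\DT(2,n)$ via the Euler characteristic version $\Eu(2,n)$. Concretely, the paper stratifies the moduli space $Q^{(2,2m)}$ of $Z_+$-semistable objects into five $\GL(2,\mathbb{C})$-invariant pieces according to the shape of the Jordan--H\"older filtration (stable; non-split extension of non-isomorphic stables; their direct sum; non-split self-extension; $E_1^{\oplus 2}$), writes $\epsilon^{(2,2m)}(Z_+)=\delta_0+\sum_{i=1}^4(\delta_i-\tfrac12\widetilde\delta_i)$ with $\widetilde\delta_i$ the corresponding pieces of $\delta^{(1,m)}\ast\delta^{(1,m)}$, and then computes $\chi(\epsilon_i,1)$ stratum by stratum using virtual Poincar\'e polynomials of $\GL(2)$-orbits. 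The residual non-integer contributions are $\tfrac{m}{2}\chi(Q^{(1,m)})$ from the self-extension stratum and $-\tfrac14\chi(Q^{(1,m)})$ from the $E_1^{\oplus 2}$ stratum; combined with the $(-1)^m$ coming from the $\tfrac14\DT(1,m)$ correction they give $-\tfrac14\chi(Q^{(1,m)})\{2m-1+(-1)^m\}\equiv 0$. The geometry does the cancellation for you.

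Your generating-function approach is set up correctly: the identity $4\Omega(2)=M(q)^{2a}-M(-q^2)^a-2a\{M^a\cdot M^a\cdot N\}_\Delta$ is right, the mod-$2$ step via Frobenius is fine, and your symmetrisation of the $\Delta$-sum is valid. The gap is that the argument stops exactly where the difficulty is. You state the ``main obstacle'' mod-$2$ identity for odd $n$,
\[
\sum_{k<n/2}c_kc_{n-k}\equiv a\sum_{m\ge 0}c_m\,\sigma_2(n-2m)\pmod 2,
\]
but do not prove it; the tools you gesture at (the differential relation $aM^aN=q\tfrac{d}{dq}M^a$ and $c_m^2\equiv c_m$) give $a\sum_m c_m\sigma_2(n-m)=nc_n$, which involves $\sigma_2(n-m)$ rather than $\sigma_2(n-2m)$, so there is a genuine combinatorial step missing before this becomes a proof. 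For even $n$ you likewise describe what needs to match without actually matching it. As written this is a plausible outline, not a proof; if you want to make it go through you will need to close that mod-$2$ identity explicitly, and the paper's remark suggests this is not entirely routine.
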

A first few terms of $\Omega(2, n)$ are calculated as follows, 
\begin{align*}
\Omega(2, 0)&=\Omega(2, 1)=0, \ \Omega(2, 2)=-\chi, \\
\Omega(2, 3)&=-\frac{1}{6}(\chi^3 +15 \chi^2 +20\chi), \\
\Omega(2, 4)&=-\frac{1}{12}(\chi^4 +30\chi^3 +119\chi^2 +102\chi).
\end{align*}
We note that $\Omega(2, n)$ are numbers which 
fill a part of the marks `?' in~\cite[Paragraph~6.5]{K-S}.
In the very recent paper by Stoppa~\cite{STOP}, 
the invariants have also been computed up to rank three. 
Especially he computed the invariants both 
using Kontsevich-Soibelman formula and Joyce-Song formula. 
He also show the integrarity of Kontsevich-Soibelman's
BPS invariant up to rank three.

\subsection{Acknowledgement}
The author thanks Tom Bridgeland and Dominic Joyce
for valuable discussions. 
He is also grateful to Jacopo Stoppa for informing him 
a mistake in the first version of this paper. 
This work is supported by World Premier International 
Research Center Initiative (WPI initiative), MEXT, Japan. 

\subsection{Notation and convention}
In this paper, all the varieties are defined over 
$\mathbb{C}$. 
For a variety $X$, the abelian category of 
coherent sheaves on $X$ is denoted by $\Coh(X)$. 
The bounded derived category of coherent sheaves on $X$, 
which forms a triangulated category, 
is denoted by $D^b(\Coh(X))$. 
For a triangulated category 
$\dD$, the shift functor is denoted by $[1]$. 
For a set of objects $S\subset \dD$, we denote by 
$\langle S \rangle_{\tr}\subset\dD$ the
smallest triangulated subcategory of $\dD$ which 
contains $S$. Also we denote by $\langle S \rangle_{\ex}
\subset \dD$
the smallest extension closed subcategory of $\dD$
which contains $S$. For an abelian category $\aA$
and a set of objects $S\subset \aA$, the subcategory 
$\langle S \rangle_{\ex}\subset \aA$ is also defined to be 
the smallest extension closed subcategory of $\aA$
which contains $S$.

\section{Triangulated category of D0-D6 bound state}
Let $X$ be a smooth projective Calabi-Yau 3-fold 
over $\mathbb{C}$, i.e. 
$$K_X=\wedge^{3}T_{X}^{\ast}\cong \oO_X, \quad H^1(\oO_X)=0.$$
We denote by $\Coh_{0}(X)$ the subcategory of $\Coh(X)$, 
defined by 
$$\Coh_{0}(X)=\{E\in \Coh(X) : \dim \Supp(E)=0\}.$$
In this section, we study the triangulated 
subcategory of $D^b(\Coh(X))$
generated by $\oO_X$ and objects in $\Coh_{0}(X)$, 
$$\dD_{X} \cneq \langle \oO_X, \Coh_{0}(X)\rangle_{\tr}
\subset D^b(\Coh(X)).$$
The triangulated category $\dD_X$ is called the 
category of \textit{D0-D6 bound state} in~\cite[Paragraph~6.5]{K-S}. 

\subsection{t-structure on $\dD_X$}
Here we construct the heart of a bounded 
t-structure on $\dD_X$. 
The readers can refer~\cite[Section~4]{GM}
for the notion of bounded t-structures and their hearts. 
\begin{lem}
There is the heart of a bounded t-structure 
$\aA_X \subset \dD_X$, written as 
\begin{align}\label{written}
\aA_X=\langle \oO_X, \Coh_{0}(X)[-1]\rangle_{\ex}.
\end{align}
\end{lem}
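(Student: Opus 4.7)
The plan is to apply the standard characterization of hearts of bounded t-structures (cf.~\cite[Section~4]{GM}): an extension-closed full subcategory $\aA \subset \dD$ is the heart of a bounded t-structure on $\dD$ if and only if (a) $\Hom_{\dD}(A, B[k]) = 0$ for all $A, B \in \aA$ and all $k < 0$, and (b) every object $E \in \dD$ admits a finite filtration by distinguished triangles $0 = E_0 \to E_1 \to \cdots \to E_n = E$ with $\Cone(E_{i-1} \to E_i) \cong A_i[k_i]$ for some $A_i \in \aA$ and $k_1 > k_2 > \cdots > k_n$. Since $\aA_X$ is extension-closed by its very definition, the task reduces to verifying (a) and (b).

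For (a), since $\aA_X$ is the extension closure of $\{\oO_X\} \cup \Coh_{0}(X)[-1]$ in $\dD_X$, an induction on the number of extension steps reduces the Hom-vanishing to the four pairs of generators. For $F, G \in \Coh_{0}(X)$ and $k < 0$, these are $\Hom(\oO_X, \oO_X[k]) = H^k(\oO_X)$, $\Hom(\oO_X, F[k-1]) = H^{k-1}(F)$, $\Hom(F[-1], G[k-1]) = \Ext^{k}(F, G)$, and $\Hom(F[-1], \oO_X[k]) = \Ext^{k+1}(F, \oO_X)$. The first three vanish because sheaf cohomology and $\Ext^{\bullet}$ of coherent sheaves vanish in negative degrees. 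The last vanishes for $k+1 < 0$ by the same reason, and for $k+1 = 0$ because $\Hom(F, \oO_X) = 0$ ($F$ is torsion while $\oO_X$ is torsion-free).

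For (b), let $\dD' \subset \dD_X$ denote the full subcategory of objects admitting a filtration as in (b). Then $\dD'$ is closed under shifts, and it contains the generators of $\dD_X$: $\oO_X \in \aA_X$ lies in $\dD'$ trivially, while each $F \in \Coh_{0}(X)$ admits the one-term filtration $0 \to F$ whose cone is $F = (F[-1])[1] \in \aA_X[1]$. Because $\dD_X = \langle \oO_X, \Coh_{0}(X)\rangle_{\tr}$ is generated as a triangulated category by these objects, it suffices to show $\dD'$ is closed under taking cones. Given a triangle $E \to F \to G$ with $E, G \in \dD'$, one concatenates the given filtrations of $E$ and $G$ to produce a (generally unsorted) filtration of $F$, then reorders the graded pieces using repeated applications of the octahedral axiom together with the Hom-vanishing from (a).

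The main technical point is the reordering step in (b); this is the analogue, in the triangulated setting, of constructing a Harder-Narasimhan filtration. It is a formal consequence of (a) with no additional geometric input, and terminates because the indices $k_i$ strictly decrease in the sorted form. All other steps are essentially bookkeeping.
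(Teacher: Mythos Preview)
Your proof is correct, but it takes a genuinely different route from the paper. The paper does not verify the characterization of hearts directly on $\dD_X$; instead it works on the ambient category $D^b(\Coh(X))$. It introduces the torsion pair $(\Coh_0(X), \fF)$ on $\Coh(X)$, where $\fF$ consists of sheaves with no zero-dimensional subsheaves, and tilts to obtain a heart $\aA^{\dag} = \langle \fF, \Coh_0(X)[-1]\rangle_{\ex}$ on all of $D^b(\Coh(X))$ via~\cite{HRS}. It then checks two compatibility conditions---that $\aA^{\dag}\cap D^b(\Coh_0(X)) = \Coh_0(X)[-1]$ and that $\Hom(\oO_X, F[-1]) = \Hom(F[-1], \oO_X) = 0$ (the latter by Serre duality)---and invokes \cite[Proposition~3.3]{Tcurve1} to conclude that the intersection $\aA^{\dag}\cap\dD_X$ is a heart on $\dD_X$ and equals $\langle \oO_X, \Coh_0(X)[-1]\rangle_{\ex}$.

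The trade-offs: the paper's argument is shorter on the page because the technical work (that tilts are hearts, and that hearts restrict along suitably compatible subcategories) is outsourced to~\cite{HRS} and~\cite{Tcurve1}. Your argument is self-contained and never leaves $\dD_X$, which is conceptually cleaner, but the reordering step in (b) is where the real content hides. That step is indeed formal from (a)---for adjacent cones $A[i]$, $B[j]$ with $i<j$, either $j>i+1$ and the extension splits by (a), or $j=i+1$ and the octahedral axiom applied to the kernel/cokernel factorization of the connecting map $B\to A$ in $\aA_X$ produces the swapped filtration---but you should be aware that a reader may want this spelled out or a precise reference (e.g., the proof of \cite[Lemma~3.2]{Brs1}) rather than the analogy with Harder--Narasimhan.
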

\begin{proof}
Let $\fF$ be the subcategory of $\Coh(X)$, defined by 
$$\fF\cneq \{ E\in \Coh(X) : \Hom(F, E)=0 \mbox{ for any }F\in 
\Coh_{0}(X)\}.$$
Then $(\Coh_{0}(X), \fF)$ is a torsion pair on $\Coh(X)$.
(cf.~\cite{HRS}.)
Let $\aA^{\dag}\subset D^b(\Coh(X))$ be the associated 
tilting, 
$$\aA^{\dag}=\langle \fF, \Coh_{0}(X)[-1]\rangle_{\ex}.$$
Note that $\aA^{\dag}$ is the heart of a bounded 
t-structure on $D^b(\Coh(X))$. 
(cf.~\cite[Proposition~2.1]{HRS}.)
It is easy to see the following. 
\begin{itemize}
\item We have 
\begin{align}\label{cond1}
\aA^{\dag}\cap D^b(\Coh_{0}(X))=\Coh_{0}(X)[-1],
\end{align}
in $D^b(\Coh(X))$. In particular the LHS of (\ref{cond1})
is the heart of a bounded t-structure on $D^b(\Coh_{0}(X))$. 
\item For any $F\in \Coh_{0}(X)$, we have 
$$\Hom(\oO_X, F[-1])=\Hom(F[-1], \oO_X)=0, $$
by the Serre duality. 
\end{itemize}
Then we can apply~\cite[Proposition~3.3]{Tcurve1}, 
and conclude that 
$\aA_X \cneq \aA^{\dag}\cap \dD_X$ is the heart of a bounded 
t-structure on $\dD_X$, satisfying (\ref{written}). 
\end{proof}
The abelian category $\aA_X \subset \dD_X$ 
is described 
in a simpler way, as follows. 
\begin{prop}
The abelian category $\aA_X$
given by (\ref{written}) 
is equivalent to the abelian category of triples
\begin{align}\label{cat:triple}
(\oO_X^{\oplus r}, F, s),
\end{align} where $r$ is an integer, 
$F\in \Coh_{0}(X)$ and $s\colon \oO_{X}^{\oplus r} \to F$
is a morphism in $\Coh(X)$. The set of morphisms 
from $(\oO_X^{\oplus r}, F, s)$ to $(\oO_{X}^{\oplus r'}, F', s')$
is
given by the commutative diagrams, 
\begin{align}\label{comdiag}
\xymatrix{
\oO_{X}^{\oplus r} \ar[r]^{s}\ar[d]_{\alpha} & F \ar[d]_{\beta} \\
\oO_{X}^{\oplus r'} \ar[r]^{s'}& F'.
}
\end{align} 
The equivalence is given by sending a triple 
$E=(\oO_X^{\oplus r}, F, s)$
to the two term complex 
 \begin{align}\label{twoterm}
\Phi(E)=\cdots \to 0 \to \oO_X^{\oplus r} \stackrel{s}{\to}
 F \to 0 \to \cdots \in \aA_X,
\end{align}
where $\oO_X^{\oplus r}$ is located in degree zero.
\end{prop}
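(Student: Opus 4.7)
The plan is to verify, in order: (i) that $\Phi(E) \in \aA_X$ for every triple $E$; (ii) that $\Phi$ is fully faithful; and (iii) that $\Phi$ is essentially surjective. Functoriality and exactness of $\Phi$ then follow from the shape of the construction together with the standard characterization of short exact sequences in the heart of a $t$-structure. For (i), given $E = (\oO_X^{\oplus r}, F, s)$, the two-term complex $\Phi(E)$ sits in the distinguished triangle $F[-1] \to \Phi(E) \to \oO_X^{\oplus r} \to F$ in $D^b(\Coh(X))$; since both $F[-1]$ and $\oO_X^{\oplus r}$ lie in $\aA_X$ and $\aA_X$ is closed under extensions, so does $\Phi(E)$.

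The core step is (ii). The key vanishings I would use are $\Hom(\oO_X, F'[i]) = 0$ for $i < 0$ (trivial, as $F'$ is a sheaf), $\Hom(F, \oO_X^{\oplus r'}) = 0$ since $F$ is torsion, $\Ext^1(F, \oO_X) = 0$ by Serre duality on the Calabi--Yau 3-fold $X$ together with $\dim \Supp F = 0$, and $\Ext^1(\oO_X, \oO_X) = H^1(\oO_X) = 0$ by the Calabi--Yau hypothesis. Applying $\Hom_{\dD_X}(-, \Phi(E'))$ to the triangle for $\Phi(E)$ and expanding each resulting term through the analogous triangle for $\Phi(E')$, these vanishings collapse the long exact sequences to an identification of $\Hom_{\aA_X}(\Phi(E), \Phi(E'))$ with the kernel of the map
\[ \Hom(\oO_X^{\oplus r}, \oO_X^{\oplus r'}) \oplus \Hom(F, F') \longrightarrow \Hom(\oO_X^{\oplus r}, F'), \quad (\alpha, \beta) \longmapsto s' \circ \alpha - \beta \circ s, \]
which is exactly the set of commutative squares of the form (\ref{comdiag}).

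For (iii), the generators $\oO_X$ and $F[-1]$ for $F \in \Coh_0(X)$ are visibly $\Phi(\oO_X, 0, 0)$ and $\Phi(0, F, 0)$, so it suffices to show that the essential image of $\Phi$ is closed under extensions in $\aA_X$. Given a short exact sequence $0 \to \Phi(E_2) \to G \to \Phi(E_1) \to 0$ in $\aA_X$ with $E_i = (\oO_X^{\oplus r_i}, F_i, s_i)$, the vanishing $\Ext^1(\oO_X, \oO_X) = 0$ forces the degree-zero part of $G$ to split as $\oO_X^{\oplus r_1 + r_2}$, while the extension class lies in $\Ext^1_{\aA_X}(\oO_X^{\oplus r_1}, F_2[-1]) \cong \Hom(\oO_X, F_2)^{\oplus r_1}$ and supplies exactly the twist needed to assemble a new structure map into some $F \in \Coh_0(X)$ fitting in $0 \to F_2 \to F \to F_1 \to 0$, yielding a triple whose $\Phi$-image is isomorphic to $G$.

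The main technical obstacle is step (ii): carefully tracking the two long exact sequences so that the resulting description of $\Hom_{\aA_X}(\Phi(E), \Phi(E'))$ matches the space of commutative squares on the nose, with no residual boundary terms or unwanted higher $\Ext$ contributions. Once this identification is pinned down, (iii) reduces to a standard extension argument and (i), functoriality, and exactness are routine.
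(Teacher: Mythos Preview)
Your proposal is correct. Steps (i) and (ii) match the paper's argument closely; the paper phrases full faithfulness as an existence-and-uniqueness-of-lift statement using the two vanishings $\Hom(\oO_X^{\oplus r}, F'[-1]) = 0$ and $\Hom(F[-1], \oO_X^{\oplus r'}) = 0$ rather than writing out the long exact sequences explicitly, but the content is the same.

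For essential surjectivity you and the paper diverge. You argue directly that the essential image of $\Phi$ is extension-closed, handling a general extension $0 \to \Phi(E_2) \to G \to \Phi(E_1) \to 0$ in one shot. The paper instead uses that every $M \in \aA_X$ carries a finite filtration with each subquotient isomorphic to $\oO_X$ or to an object of $\Coh_0(X)[-1]$, and inducts up the filtration, so at each step it only has to extend a two-term complex by one of these simple pieces. This makes the cone computation transparent: when the new subquotient is $\oO_X$, the vanishing $H^1(\oO_X)=0$ forces the connecting map $\oO_X[-1] \to \oO_X^{\oplus r}$ to be zero and yields $F[-1] \to M_j \to \oO_X^{\oplus r+1}$; when it is $F'[-1]$, the vanishing $\Hom(F'[-2], \oO_X^{\oplus r})=0$ pushes the extension entirely onto the $\Coh_0$ side and yields $F''[-1] \to M_j \to \oO_X^{\oplus r}$. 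Your approach would certainly work, but as written it is a little hand-wavy (the phrase ``the degree-zero part of $G$'' presupposes what you are proving, and the extension class lives in $\Ext^1(\Phi(E_1),\Phi(E_2))$, not literally in $\Ext^1(\oO_X^{\oplus r_1}, F_2[-1])$); filling in the details would amount to the same two-case analysis the paper does. The paper's inductive reduction buys simplicity; your formulation is more conceptual but needs the extra care you flag as the ``main technical obstacle''.
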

\begin{proof}
For a triple
$E=(\oO_X^{\oplus r}, F, s)$ as in (\ref{cat:triple}), 
note that the two term complex $\Phi(E)$
given by (\ref{twoterm}) fits into 
the exact sequence in $\aA_X$, 
\begin{align*}
0 \lr F[-1] \lr \Phi(E) \lr \oO_{X}^{\oplus r} \lr 0.
\end{align*}
Let us consider a diagram (\ref{comdiag}). 
Since $\Hom(\oO_X^{\oplus r}, F'[-1])=0$, there is a unique 
morphism $\gamma \colon \Phi(E)\to \Phi(E')$ which 
fits into the commutative diagram, 
\begin{align}\label{comdia2}
\xymatrix{
0 \ar[r] & F[-1] \ar[r]\ar[d]_{\beta[-1]} & \Phi(E) \ar[r]\ar[d]_{\gamma}
 & \oO_{X}^{\oplus r} \ar[r]\ar[d]_{\alpha} & 0 \\
0 \ar[r] & F[-1] \ar[r] & \Phi(E) \ar[r]
 & \oO_{X}^{\oplus r} \ar[r] & 0.}
\end{align}
Hence $E \mapsto \Phi(E)$ is a functor from the category of 
triples (\ref{cat:triple}) to $\aA_X$. Using the diagram (\ref{comdia2}) 
and $\Hom(F[-1], \oO_{X}^{\oplus r'})=0$, 
it is easy to see that $\Phi$ is fully faithful. 
Hence it suffices to show that $\Phi$ is essentially surjective. 

Let us take an object $M\in \aA_X$. 
By (\ref{written}), 
there is a filtration in $\aA_X$, 
$$M_0 \subset M_1 \subset \cdots \subset M_k=M,$$
such that each subquotient $N_i=M_i/M_{i-1}$ is 
isomorphic to $\oO_X$ or an object in $\Coh_{0}(X)[-1]$. 
We show that each $M_j$ is quasi-isomorphic 
to a two term complex (\ref{twoterm}) 
by the induction on $j$. 
The case of $j=0$ is obvious. Suppose that 
$M_{j-1}$ is isomorphic to a 
two term complex $(\oO_X^{\oplus r}\stackrel{s}{\to}F)$
for $F\in \Coh_{0}(X)$. There are two cases. 
\begin{case}\emph{$N_j$ is isomorphic to $\oO_X$.} 
\end{case}
In this case, we have the commutative diagram, 
$$\xymatrix{
& \oO_X[-1] \ar[d]\ar[rd]^{0} & \\
F[-1] \ar[r] &M_{j-1} \ar[r] & \oO_{X}^{\oplus r}, 
}$$
since $H^1(\oO_X)=0$. Taking the cones, we obtain 
the distinguished triangle, 
$$F[-1] \lr M_{j} \lr \oO_X^{\oplus r+1}.$$
Therefore $M_{j}$ is quasi-isomorphic to 
a two term complex $(\oO_X^{\oplus r+1} \to F)$. 
\begin{case}\emph{$N_j$ is isomorphic to $F'[-1]$
for $F'\in \Coh_{0}(X)$.} 
\end{case}
In this case, we have the commutative diagram, 
$$\xymatrix{
& F'[-2]\ar[ld]\ar[d] &\\
F[-1] \ar[r] & M_{j-1} \ar[r] & \oO_X^{\oplus r}, }$$
since $\Hom(F'[-2], \oO_X^{\oplus r})=0$. 
Taking the cones, we obtain the distinguished triangle, 
$$F''[-1] \lr M_{j} \lr \oO_X^{\oplus r}.$$
Here $F''$ fits into the exact sequence of sheaves
$0 \to F\to F'' \to F' \to 0$, hence $F''\in \Coh_{0}(X)$. 
Then $M_j$ is quasi-isomorphic to a two term complex 
$(\oO_X^{\oplus r} \to F'')$. 
\end{proof}
In what follows, we write an object $E\in \aA_X$
as a two term complex $(\oO_X^{\oplus r} \to F)$
occasionally. 
We set $S_0, S_x \in \aA_X$ for $x\in X$
as follows, 
\begin{align}\label{S0x}
S_0=(\oO_X \to 0), \quad S_x=(0 \to \oO_x).
\end{align}
The following lemma is obvious. 
\begin{lem}\label{simple}
An object $E\in \aA_X$
is simple if and only if 
$E$ is isomorphic to $S_0$ or $S_x$
for $x\in X$. 
Any objects in $\aA_X$ is written as successive extensions of 
these simple objects. 
\end{lem}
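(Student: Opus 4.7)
The plan is to prove the two claims in sequence: first that every object of $\aA_X$ is a successive extension of objects in $\{S_0\}\cup\{S_x : x\in X\}$, and then that $S_0$ and $S_x$ are precisely the simple objects of $\aA_X$. The second statement of the lemma is in fact the easier part, and I would use it together with simplicity to deduce that every simple object is of one of these two types.

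For the existence of a filtration, I would argue directly from the defining equation $\aA_X=\langle \oO_X, \Coh_{0}(X)[-1]\rangle_{\ex}$. Since $\Coh_{0}(X)$ is an abelian category in which every object is a finite successive extension of the simple skyscrapers $\oO_x$, the shifted subcategory $\Coh_{0}(X)[-1]$ is extension-generated by the $S_x$'s. Therefore $\aA_X$ is extension-generated by $S_0$ and the $S_x$, giving the filtration. This immediately implies the ``only if'' direction of the simplicity statement: any nonzero $E\in\aA_X$ admits a nonzero subobject $E_1$ isomorphic to one of the simples, so if $E$ itself is simple then $E\cong E_1\in\{S_0\}\cup\{S_x\}$.

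The main technical content is therefore to verify that $S_0$ and $S_x$ really are simple in $\aA_X$. My approach is to test subobjects through the triple description $E=(\oO_X^{\oplus r}\to F)$ supplied by the previous proposition, together with the long exact sequence of $\Coh(X)$-cohomology applied to a hypothetical short exact sequence $0\to A\to S_\ast\to B\to 0$ in $\aA_X$. The constraints I will repeatedly exploit are: $H^0(A),H^0(B)\in\fF$ and $H^1(A),H^1(B)\in\Coh_0(X)$ (since $A,B\in\aA^{\dag}$); and, for $A=(\oO_X^{\oplus r_A}\xrightarrow{s_A}F_A)$, that $\oO_X^{\oplus r_A}/\ker(s_A)$ is zero-dimensional. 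For $S_0=(\oO_X\to 0)$, a morphism $A\to S_0$ is just a scalar map $\alpha\colon\oO_X^{\oplus r_A}\to\oO_X$; the cohomology sequence gives $H^0(A)\hookrightarrow\oO_X$ and $\oO_X/H^0(A)\hookrightarrow H^0(B)\in\fF$, which combined with the rank and torsion-quotient constraints forces $H^0(A)\in\{0,\oO_X\}$, and a short additional argument (using $\Hom(F_A[-1],\oO_X)=0$ in the vanishing case, and $H^0(B)\in\fF$ in the surjective case) forces $A\in\{0,S_0\}$. For $S_x=(0\to\oO_x)$ a morphism is $\beta\colon F_A\to\oO_x$; now the same sequence gives $H^0(A)=0$ directly, hence $r_A=0$ and $A=F_A[-1]$, and the injection $F_A\hookrightarrow\oO_x$ together with $\length\oO_x=1$ gives $A\in\{0,S_x\}$.

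The hard part is the $S_0$ case, because a priori $H^0(A)$ can be any subsheaf $\iI_Z\subset\oO_X$ in $\fF$; ruling out proper nonzero such subsheaves requires simultaneously using the rank constraint ($\ker(s_A)$ has rank $r_A$ in $\oO_X^{\oplus r_A}$) and the fact that $\oO_X/\iI_Z$, as a subsheaf of $H^0(B)\in\fF$, cannot be zero-dimensional unless $Z=\emptyset$. Once this case analysis is set up carefully, both simplicity statements follow mechanically, and the equivalence asserted by the lemma is complete.
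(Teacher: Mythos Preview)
Your proof is correct. The paper itself gives no proof of this lemma, stating only that it is ``obvious,'' so there is no detailed argument to compare against.

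What the paper presumably has in mind is shorter than your cohomological analysis. Since every nonzero $E=(\oO_X^{\oplus r}\to F)\in\aA_X$ has $\cl(E)=(r,\length F)\in\mathbb{Z}_{\ge 0}^2\setminus\{0\}$, and $\cl$ is additive on short exact sequences, any object with $\cl$ equal to $(1,0)$ or $(0,1)$ is automatically simple: a decomposition $(1,0)=\cl(A)+\cl(B)$ or $(0,1)=\cl(A)+\cl(B)$ in $\mathbb{Z}_{\ge 0}^2$ forces one summand to vanish, hence $A=0$ or $B=0$. Combined with the filtration statement---which you prove exactly as intended, from $\aA_X=\langle\oO_X,\Coh_0(X)[-1]\rangle_{\ex}$ and the fact that $\Coh_0(X)$ is generated under extensions by skyscrapers---this immediately yields the classification of simple objects.

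Your route via the long exact sequence of sheaf cohomology and the constraints $H^0(A),H^0(B)\in\fF$ reaches the same conclusion and is entirely valid; it simply works harder than necessary for the simplicity of $S_0$ and $S_x$. Its virtue is that it does not rely on first noticing that $\cl$ reflects the zero object.
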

\subsection{Stability condition on $\aA_X$}
Here we discuss stability conditions on $\aA_X$, 
and the associated (semi)stable objects in $\aA_X$. 
The stability condition discussed here is 
based on the notion of stability 
conditions on triangulated categories by Bridgeland~\cite{Brs1}.

Let $\aA_X \subset \dD_X$ be the abelian category 
given by (\ref{written}).
We set $\Gamma=\mathbb{Z}\oplus \mathbb{Z}$
and a group homomorphism 
$$\cl \colon K(\aA_X) \to \Gamma, $$
by the following,
$$\cl \colon (\oO_X^{\oplus r} \to F) \mapsto
(r, \length F).$$
Also we denote by $\mathbb{H}\subset \mathbb{C}$
the upper half plane, 
$$\mathbb{H}=\{ z\in \mathbb{C} : \Imm z>0\}.$$
\begin{defi}\emph{
A \textit{stability condition} on $\aA_X$ is a group 
homomorphism $Z\colon \Gamma \to \mathbb{C}$, 
which satisfies 
$$Z(\cl(E)) \in \mathbb{H},$$
for any non-zero object $E\in \aA_X$. }
\end{defi}
In what follows, we write $Z(\cl(E))$ as $Z(E)$ for simplicity. 
\begin{rmk}
By Lemma~\ref{simple}, a group homomorphism $Z\colon \Gamma \to \mathbb{C}$
is a stability condition on $\aA_X$ if and only if 
$$Z(1, 0)\in \mathbb{H}, \quad Z(0, 1)\in \mathbb{H}.$$
In particular the set of stability conditions is 
parameterized by points in $\mathbb{H}^2$. 
\end{rmk}
\begin{rmk}
For a stability condition $Z\colon \Gamma \to \mathbb{C}$
on $\aA_X$, the pair $(Z, \aA_X)$ determines 
a stability condition on $\dD_X$ in the sense of
Bridgeland~\cite{Brs1}. 
\end{rmk}
The notion of (semi)stable objects are defined 
as follows. 
\begin{defi}\emph{
Let $Z\colon \Gamma \to \mathbb{C}$ be a stability 
condition on $\aA_X$. We say $E\in \aA_X$ is 
$Z$-\textit{semistable} (resp.~\textit{stable})
 if for any non-zero proper subobject
$0\subsetneq F\subsetneq E$ in $\aA_X$, the 
following inequality holds, }
\begin{align*}
&\arg Z(F) <\arg Z(E), \quad
(\mbox{\emph{resp}}.~\arg Z(F)
\le \arg Z(E).)
\end{align*}
\end{defi}
\subsection{Semistable objects in $\aA_X$}
We fix three stability conditions on $\aA_X$, 
\begin{align}\label{Zast}
Z_{\ast} \colon \Gamma \to \mathbb{C}, \quad 
\ast=\pm, 0
\end{align}
satisfying the following, 
\begin{align*}
&\arg Z_{+}(1, 0)>\arg Z_{+}(0, 1), \\
&\arg Z_{0}(1, 0)=\arg Z_{0}(0, 1), \\
&\arg Z_{-}(1, 0)<\arg Z_{-}(0, 1).
\end{align*}
The set of $Z_{\ast}$-(semi)stable objects are 
characterized as follows. 
\begin{prop}\label{prop:char}
(i) An object $E\in \aA_X$ is $Z_{-}$-(semi)stable 
if and only if $E$ is isomorphic to 
\begin{align}\label{char}
(\oO_X^{\oplus r} \to 0) \quad \mbox{ or } \quad (0 \to F),
\end{align}
for $r\in \mathbb{Z}$ and $F\in \Coh_{0}(X)$. 
(resp.~isomorphic to 
$S_0$ or $S_x$ for $x\in X$, given in (\ref{S0x}).) 

(ii) Any object in $\aA_X$ is $Z_{0}$-semistable, and 
$E\in \aA_X$ is $Z_{0}$-stable if and only if 
$E$ is isomorphic to $S_0$ or $S_x$ for $x\in X$. 

(iii) An object $E\in \aA_X$ is $Z_{+}$-(semi)stable 
if and only if $E$ is isomorphic to (\ref{char}), 
(resp. $S_0$ or $S_x$ for $x\in X$,) or 
isomorphic to $(\oO_X^{\oplus r} \stackrel{s}{\to} F)$
with $r>0$, $F\neq 0$, satisfying the following. 
\begin{itemize}
\item The image of the induced morphism between 
global sections, 
\begin{align}\label{V}
V=\Imm\{ H^{0}(s)\colon \mathbb{C}^{\oplus r} \to H^0(F)\},
\end{align}
is $r$-dimensional and generates $F$ as an $\oO_X$-module. 
\item For any non-zero proper subvector space $0\subsetneq W \subsetneq V$, 
the subsheaf $F_W \cneq \oO_X \cdot W \subset F$ satisfies 
\begin{align}\label{desire}
\frac{\length F_{W}}{\dim W}\ge \frac{\length F}{r}, \quad 
\left(resp.~\frac{\length F_{W}}{\dim W}> \frac{\length F}{r}.\right)
\end{align}
\end{itemize}
\end{prop}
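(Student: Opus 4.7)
The plan is to translate $Z_\ast$-(semi)stability of $E \in \aA_X$ into explicit linear-algebraic conditions via the description of $\aA_X$ as triples. The first step is to classify subobjects of $E = (\oO_X^{\oplus r} \stackrel{s}{\to} F)$. Using the fully-faithful equivalence to the category of triples, a morphism from $E' = (\oO_X^{\oplus r'} \stackrel{s'}{\to} F')$ to $E$ is a pair of sheaf maps $\alpha : \oO_X^{\oplus r'} \to \oO_X^{\oplus r}$ and $\beta : F' \to F$ with $\beta s' = s \alpha$. Computing kernels in the category of triples (where $\ker \alpha$ is again free, since $\alpha$ is encoded by a scalar matrix) and matching them with kernels in $\aA_X$ via the equivalence $\Phi$, one shows that such a morphism is a monomorphism in $\aA_X$ iff both $\alpha$ and $\beta$ are injective. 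Thus subobjects of $E$ are parametrized by pairs $(W', F')$ with $W' \subset \mathbb{C}^r$ a linear subspace, $F' \subset F$ a subsheaf, and $s(W' \otimes \oO_X) \subset F'$; the class of such a subobject is $(\dim W', \length F')$.

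For parts (i) and (ii), the phase analysis is then direct. When $r, n > 0$ with $n = \length F$, the phase $\arg Z_\ast(E) = \arg(r Z_\ast(1,0) + n Z_\ast(0,1))$ lies strictly between $\arg Z_\ast(1,0)$ and $\arg Z_\ast(0,1)$. For $\ast = -$ (where $\arg Z_-(0,1) > \arg Z_-(1,0)$), the subobject $F[-1] = (0, F)$ has strictly larger phase than $E$, so $Z_-$-semistability forces $r = 0$ or $F = 0$. The remaining objects are $\oO_X^{\oplus r}$ (whose subs are all $\oO_X^{\oplus r'}$ of phase $\arg Z_-(1,0)$) and $F[-1]$ (whose subs are all $F'[-1]$ of phase $\arg Z_-(0,1)$), both visibly semistable, and stable iff simple. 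For $\ast = 0$ with $\arg Z_0(1,0) = \arg Z_0(0,1)$, every vector $r' Z_0(1,0) + n' Z_0(0,1)$ has the same argument, so every object is semistable and stable iff simple.

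For part (iii) with $r > 0$ and $F \neq 0$, the two listed conditions correspond to ruling out two families of destabilizing subobjects. If $\ker H^0(s) = U \ne 0$, then the pair $(U, 0)$ yields the subobject $\oO_X^{\oplus \dim U}$ of phase $\arg Z_+(1,0) > \arg Z_+(E)$, destabilizing $E$; hence $\dim V = r$ is forced. If $V$ fails to generate $F$, let $F_V = \oO_X \cdot V \subsetneq F$; then $(\mathbb{C}^r, F_V)$ is a subobject of class $(r, \length F_V)$ with $\length F_V < n$, hence of strictly larger phase than $E$, again destabilizing. Conversely, under both conditions, every proper nonzero subobject $(W', F')$ either has $W' = 0$ (phase $\arg Z_+(0,1) < \arg Z_+(E)$, harmless) or $W' = \mathbb{C}^r$ (forcing $F' \supset F_V = F$, so $E' = E$, excluded), leaving $0 \subsetneq W' \subsetneq \mathbb{C}^r$, identified with a proper nonzero $W \subsetneq V$ via the injection $H^0(s)$. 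For fixed $W$, the phase $\arg Z_+(\dim W, \length F')$ decreases as $\length F'$ increases, so the tightest semistability constraint comes from $F' = F_W = \oO_X \cdot W$. The inequality $\arg Z_+(\dim W, \length F_W) \le \arg Z_+(E)$ is then equivalent, by a direct computation of arguments in $\mathbb{H}$, to the slope inequality $\length F_W / \dim W \ge \length F / r$; stability corresponds to the strict version.

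The main obstacle is the subobject classification in the first paragraph: justifying that monomorphisms in $\aA_X$ between triples are exactly pairs of injective sheaf maps. This requires a careful kernel computation in the category of triples, together with the observation that the kernel of a linear map $\alpha : \oO_X^{\oplus r'} \to \oO_X^{\oplus r}$ remains free. Once this identification is secure, the remainder of the argument reduces to elementary phase comparisons in $\mathbb{H}$ and the observation that enlarging $F'$ beyond the minimal compatible $F_W$ can only decrease the phase of the candidate subobject.
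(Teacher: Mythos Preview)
Your proof is correct and follows essentially the same approach as the paper's: both rely on the triple description of $\aA_X$ and reduce the converse in (iii) to checking the minimal subobject $(\oO_X \otimes W \twoheadrightarrow F_W) \hookrightarrow E$ for each proper subspace $W$. The only minor differences are that you front-load an explicit classification of all subobjects of $E$ (the paper does this implicitly, handling an arbitrary injection $E' \hookrightarrow E$ and passing to $E'' = (\oO_X^{\oplus r'} \twoheadrightarrow \Imm s')$, which is exactly your $(W, F_W)$), and for the generation condition you exhibit the destabilizing subobject $(\mathbb{C}^r, F_V)$ whereas the paper uses the destabilizing quotient $E \twoheadrightarrow \oO_x[-1]$.
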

\begin{proof}
(i) Take a non-zero object $E\in \aA_X$, 
which is isomorphic to $(\oO_X^{\oplus r} \stackrel{s}{\to} F)$
for $F\in \Coh_{0}(X)$. 
We have the exact sequence in $\aA_X$, 
\begin{align}\label{dest}0 \lr F[-1] \lr E \lr \oO_X^{\oplus r} \lr 0.
\end{align}
If $r\neq 0$ and $F\neq 0$, then we have 
$$\arg Z_{-}(F[-1])>\arg Z_{-}(E),$$
hence (\ref{dest}) destabilizes $E$. Therefore 
if $E$ is $Z_{-}$-semistable, 
we have $r=0$ or $F=0$. 
Furthermore if $E$ is $Z_{-}$-stable, 
$r=1$ or $\length F=1$ must hold.
Hence $E$ is isomorphic to 
$S_0$ or $S_x$ for $x\in X$. 
Conversely it is easy to see that 
objects in
(\ref{char}), (resp.~$S_0$, $S_x$ for $x\in X$,)
 are $Z_{-}$-semistable. (resp.~$Z_{-}$-stable.)
 
(ii) The proof of (ii) is obvious. 
 
(iii) Let us take
 a non-zero object $E=(\oO_X^{\oplus r} \stackrel{s}{\to} F)\in \aA_X$.
If $r=0$ or $F=0$, it is easy to see that $E$ is $Z_{+}$-semistable, 
and it is furthermore 
$Z_{+}$-stable if and only if 
$E$ is isomorphic to $S_0$ or $S_x$ for $x\in X$. 
Therefore we assume that $r\neq 0$ and $F\neq 0$. 

Suppose that $E$ is $Z_{+}$-(semi)stable, and take
$V\subset H^0(F)$ as in (\ref{V}).
If $\dim V<r$, then there is an injection $\oO_X \hookrightarrow E$
in $\aA_X$. Then we have 
$$\arg Z_{+}(\oO_X)>\arg Z_{+}(E).$$
This contradicts to that $E$ is $Z_{+}$-semistable, hence 
$V$ is $r$-dimensional. Furthermore if $V$ does not generate
$F$ as an $\oO_X$-module, there is a closed point $x\in X$
and a surjection $E\twoheadrightarrow \oO_x[-1]$
in $\aA_X$. Since 
$$\arg Z_{+}(E)>\arg Z_{+}(\oO_x),$$
this is a contradiction. 
Also take a subvector space $0\subsetneq W \subsetneq V$
and the subsheaf of $F$, $F_W=\oO_X \cdot W \subset F$. 
Then there is an injection in $\aA_X$, 
$$(\oO_X \otimes_{\mathbb{C}} W \twoheadrightarrow F_W) \hookrightarrow E, $$
hence the $Z_{+}$-(semi)stability implies
the desired inequality (\ref{desire}). 

Conversely suppose that $V$ is $r$-dimensional, 
$V$ generates $F$ as an $\oO_X$-module
 and the inequality (\ref{desire}) holds. 
Since $V$ generates $F$, 
the morphism $s\colon 
\oO_X^{\oplus r} \to F$ is surjective, 
and $E$ is a coherent sheaf. 
Take an injection in $\aA_X$, 
\begin{align}\label{inj}
E'=(\oO_X^{\oplus r'} \stackrel{s'}{\to} F')\hookrightarrow E.
\end{align}
If $r'=r$, then (\ref{inj}) is an isomorphism 
since $\oO_X^{\oplus r} \stackrel{s}{\to} F$ is surjective. 
If $r'=0$, then $\arg Z_{+}(E')<\arg Z(E)$ is obviously satisfied. 
Let us assume $0<r'<r$, and take $F''=\Imm s' \subset F'$.
Note that there are injections in $\aA_X$, 
$$E''=(\oO_X^{\oplus r'}\twoheadrightarrow F'')
 \hookrightarrow E' \hookrightarrow E.$$ 
 Since the cokernel of $E''\hookrightarrow E'$
 lies in $\Coh_0(X)[-1]$, we have 
\begin{align}\label{imp1}
\arg Z_{+}(E') \le \arg Z_{+}(E'').
\end{align}
Also since $V$ is $r$-dimensional, the inequality (\ref{desire})
implies 
\begin{align}\label{imp2}
\arg Z_{+}(E)>\arg Z_{+}(E''), \quad 
(\mbox{resp}.~\arg Z_{+}(E)\ge \arg Z_{+}(E'').)
\end{align}
By (\ref{imp1}) and (\ref{imp2}), the object $E$ is
$Z_{+}$-(semi)stable. 
\end{proof}
\begin{rmk}\label{rmk:nonpro}
By Proposition~\ref{prop:char} (iii), 
giving a $Z_{+}$-semistable $E\in \aA_X$
is equivalent to giving 
a pair $(F, V)$, where 
$F\in \Coh_{0}(X)$
and $V$ is a linear subspace $V\subset H^0(F)$
which generates $F$ as an $\oO_X$-module, and satisfying 
the stability condition (\ref{desire}). 
The notion of such pairs $(F, V)$ also makes sense for 
non-projective Calabi-Yau 3-fold $X$. 
\end{rmk}
\begin{exam}\emph{
(i) If $r=1$, then 
$(F, V)$ gives a $Z_{+}$-semistable object if and only 
if $V$ generates $F$ as an $\oO_X$-module. 
Suppose that $X=\mathbb{C}^3$.
The torus $T=\mathbb{G}_m^3$
acts on $X$, and the $T$-invariant 
pairs $(F, V)$ 
with $\length F=n$ 
bijectively corresponds to 3-dimensional 
partitions. For instance, 
the case of $n=3$ is as follows, 
\begin{align*}
F=\left\{\begin{array}{l}
\mathbb{C}\oplus \mathbb{C}x \oplus \mathbb{C}x^2 \\
\mathbb{C}\oplus \mathbb{C}y \oplus \mathbb{C}y^2 \\
\mathbb{C}\oplus \mathbb{C}z \oplus \mathbb{C}z^2 \\
\mathbb{C}\oplus \mathbb{C}x \oplus \mathbb{C}y \\
\mathbb{C}\oplus \mathbb{C}y \oplus \mathbb{C}z \\
\mathbb{C}\oplus \mathbb{C}y \oplus \mathbb{C}z
\end{array} \right. 
\supset
V=\left\{\begin{array}{l}
\mathbb{C} \\
\mathbb{C} \\
\mathbb{C} \\
\mathbb{C} \\
\mathbb{C} \\
\mathbb{C}
\end{array} \right. 
\end{align*}
Here $x, y, z$ are coordinates of $\mathbb{C}^3$.} 

 \emph{(ii)
Suppose that $X=\mathbb{C}^3$ and $(r, n)=(2, 3)$.
In the notation of (i), the $T$-fixed 
$Z_{+}$-semistable $(F, V)$ are classified as follows. 
\begin{align*}
F=\left\{\begin{array}{l}
\mathbb{C}\oplus \mathbb{C}x \oplus \mathbb{C}x^2 \\
\mathbb{C}\oplus \mathbb{C}y \oplus \mathbb{C}y^2 \\
\mathbb{C}\oplus \mathbb{C}z \oplus \mathbb{C}z^2 \\
\mathbb{C}x\oplus \mathbb{C}y \oplus \mathbb{C}xy \\
\mathbb{C}y\oplus \mathbb{C}z \oplus \mathbb{C}yz \\
\mathbb{C}x\oplus \mathbb{C}z \oplus \mathbb{C}xz
\end{array} \right. 
\supset
V=\left\{\begin{array}{l}
\mathbb{C}\oplus \mathbb{C}x \\
\mathbb{C}\oplus \mathbb{C}y \\
\mathbb{C}\oplus \mathbb{C}z \\
\mathbb{C}x \oplus \mathbb{C}y \\
\mathbb{C}y \oplus \mathbb{C}z \\
\mathbb{C}x \oplus \mathbb{C}z
\end{array} \right. 
\end{align*}
}
\end{exam}
\subsection{Moduli stacks}
Here we discuss the moduli 
stack of objects in $\aA_X$
and its substack of semistable object.  
For the notion of stacks, the readers can refer~\cite{GL}. 

Let $\oO bj(\aA_X)$ be the 2-functor, 
\begin{align*}
\oO bj(\aA_X)\colon \Sch/\mathbb{C} \to 
\groupoid,
\end{align*}
which sends a $\mathbb{C}$-scheme $S$
to the groupoid of objects $\eE \in D^b(X\times S)$, 
which is relatively perfect over $S$ and satisfies 
$\mathbb{L}i_{s}^{\ast}\eE \in \aA_X$
for any closed point $s\in S$. 
(See~\cite{LIE}.)
Here $i_{s}\colon X\times \{s\} \hookrightarrow X\times S$
is the inclusion. 
The 2-functor $\oO bj(\aA_X)$ forms a stack, and 
we have the decomposition,
$$\oO bj(\aA_X)=\coprod_{(r, n)\in \Gamma}\oO bj^{(r, n)}(\aA_X),$$
where $\oO bj^{(r, n)}(\aA_X) \subset \oO bj(\aA_X)$
is the substack of objects $E\in \aA_X$ with 
$\cl(E)=(r, n)$. 

Let us show that $\oO bj^{(r, n)}(\aA_X)$ is an
algebraic stack of finite type by describing 
it as a global quotient stack of 
the Quot scheme. 
For $(r, n) \in \Gamma$, 
recall that the Grothendieck's Quot scheme~\cite{Hu}
parameterizes isomorphism classes of quotients, 
$$\Quot^{(n)}(\oO_X^{\oplus r})=\{
\oO_X^{\oplus r}\stackrel{s}{\twoheadrightarrow}
 F : F\in \Coh_{0}(X), \length F=n\}/\cong.$$
Here two quotients $\oO_X^{\oplus r}\stackrel{s}{\twoheadrightarrow} F$
and $\oO_X^{\oplus r}\stackrel{s'}{\twoheadrightarrow} F'$
are isomorphic if and only if there is a commutative diagram, 
$$\xymatrix{
\oO_{X}^{\oplus r}\ar[r]^{s}\ar[d]_{\id} & F \ar[d]_{\cong} \\
\oO_X^{\oplus r}\ar[r]^{s'} & F'.
}$$
In particular there are no non-trivial automorphisms, and
the resulting moduli space $\Quot^{(n)}(\oO_X^{\oplus r})$
is a projective fine moduli scheme.
Note that there is a natural right $\GL(r, \mathbb{C})$-action 
on $\Quot^{(n)}(\oO_X^{\oplus r})$, 
given by 
$$(\oO_X^{\oplus r}\stackrel{s}{\twoheadrightarrow}
F)\cdot g=(\oO_X^{\oplus r}\stackrel{s \cdot g}{\twoheadrightarrow}F).$$

 We set 
$$U^{(n)}=\{(\oO_X^{\oplus n}\stackrel{s}{\twoheadrightarrow} F)\in 
\Quot^{(n)}(\oO_X^{\oplus n}) \mid
H^0(s) \colon \mathbb{C}^n \stackrel{\cong}{\to} H^0(F)\}.$$
It is easy to see that $U^{(n)}$ is an open substack 
of $\Quot^{(n)}(\oO_X^{\oplus n})$. 
For an object $F\in \Coh_{0}(X)$ with $\length F=n$, 
let us choose an isomorphism $\mathbb{C}^{n}\cong H^{0}(F)$. 
By applying $\otimes_{\mathbb{C}}\oO_X$ and 
composing the natural surjection, 
$$ \oO_X^{\oplus n} \cong H^0(F)\otimes_{\mathbb{C}}
 \oO_X \twoheadrightarrow F, $$
we obtain a point in $U^{(n)}$.  
Such a point is obtained up to a choice of
an isomorphism 
$\mathbb{C}^{n}\cong H^{0}(F)$, hence 
$\oO bj^{(0, n)}(\aA_X)$ is constructed as the quotient stack, 
$$\oO bj^{(0, n)}(\aA_X)=[U^{(n)}/\GL(n, \mathbb{C})].$$
For $r>0$, the moduli stack $\oO bj^{(r, n)}(\aA_X)$ is constructed
as follows. Let $\qQ \in \Coh(U^{(n)}\times X)$
be an universal quotient sheaf restricted to $U^{(n)}$, 
and $\pi_{U}\colon U^{(n)}\times X\to U^{(n)}$ the projection. 
We construct the affine bundle $U^{(r, n)} \to U^{(n)}$ as
\begin{align}\label{Urn}
U^{(r, n)}=\sS pec_{\oO_{U^{(n)}}}\Sym^{\bullet}
(\pi_{U\ast}\qQ^{\oplus r})^{\ast}\to U^{(n)}.
\end{align}
It is easy to see that $U^{(r, n)}$ represents the functor 
sending a $\mathbb{C}$-scheme $S$ to the set of 
isomorphism classes of the diagram, 
\begin{align}\label{Urn2}
\oO_{S\times X}^{\oplus n}\twoheadrightarrow \fF 
\leftarrow \oO_{S\times X}^{\oplus r},
\end{align}
where $\fF$ is a coherent sheaf on $S\times X$
flat over $S$, and the induced quotient $\oO_X^{\oplus n}
\to \fF|_{\{s\} \times X}$ for each closed point $s\in S$ determines a point 
in $U^{(n)}$. There is a right 
$\GL(r, \mathbb{C})$-action on $U^{(r, n)}$
along the fibers of the morphism (\ref{Urn}), 
acting on the right arrow of (\ref{Urn2}). 
Also the right 
$\GL(n, \mathbb{C})$-action on $U^{(n)}$ naturally lifts to 
the right action on $U^{(r, n)}$, and these actions commute. 
Hence there is a right
$G^{(r, n)}\cneq \GL(r, \mathbb{C})\times \GL(n, \mathbb{C})$-action 
on $U^{(r, n)}$, and 
the moduli stack $\oO bj^{(r, n)}(\aA_X)$ can be constructed as 
\begin{align}\label{Inpati}
\oO bj^{(r, n)}(\aA_X)
=[U^{(r, n)}/G^{(r, n)}].
\end{align}
In particular $\oO bj^{(r, n)}(\aA_X)$ is an algebraic 
stack of finite type over $\mathbb{C}$. 
\begin{prop}\label{loc.dis}
For any $p\in U^{(r, n)}$,
there is a $G^{(r, n)}$-invariant analytic open subset 
$p\in U_p \subset U^{(r, n)}$, a $G^{(r, n)}$-equivariant 
embedding $U_p \subset M_p$ for a complex manifold with a right
$G^{(r, n)}$-action, and a $G^{(r, n)}$-invariant 
holomorphic function $f_p \colon M_p \to \mathbb{C}$ such that 
$$U_p=\{z\in M_p : df_{p}(z)=0\}.$$
\end{prop}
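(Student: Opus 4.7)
The plan is to invoke the analytic local presentation of moduli spaces of objects in the derived category of a Calabi-Yau 3-fold as critical loci of holomorphic functions, established by Joyce-Song~\cite{JS} (refining the pointwise result of Behrend~\cite{Beh}), and to equivariantize it along the $G^{(r,n)}$-action.

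First I would identify the deformation theory at $p$. A point $p \in U^{(r,n)}$ corresponds to the data $\oO_X^{\oplus n} \twoheadrightarrow F_p \leftarrow \oO_X^{\oplus r}$, equivalently to an object $E_p = (\oO_X^{\oplus r} \to F_p) \in \aA_X$ together with an isomorphism $\mathbb{C}^n \stackrel{\sim}{\to} H^0(F_p)$. Because the free modules appearing in the framing are rigid in the relevant sense, deformations and obstructions of this framed datum coincide with $\Ext^1(E_p, E_p)$ and $\Ext^2(E_p, E_p)$ respectively. The Calabi-Yau 3 condition supplies a Serre-dual pairing $\Ext^{2}(E_p, E_p) \cong \Ext^{1}(E_p, E_p)^{\ast}$, yielding a symmetric obstruction theory at $p$.

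Second I would construct the Kuranishi-type chart. Using a cyclic $L_\infty$-structure on $\dR \Hom(E_p, E_p)[1]$ (whose cyclic pairing exists precisely because $X$ is CY3), one produces a holomorphic function $f_p^0 \colon M_p^0 \to \mathbb{C}$ on an analytic open neighborhood $M_p^0$ of $0$ in $\Ext^{1}(E_p, E_p)$ whose critical locus identifies, as an analytic germ, with a slice for $U^{(r,n)}$ at $p$. This step is where the symmetric obstruction theory is used: the Maurer-Cartan equation cutting out deformations of $E_p$ becomes $df_p^0 = 0$ thanks to the cyclic pairing.

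Finally I would equivariantize with respect to $G^{(r,n)}$. In view of the quotient-stack presentation~(\ref{Inpati}), the chart $(M_p^0, f_p^0)$ describes $U^{(r,n)}$ transversely to the $G^{(r,n)}$-orbit of $p$. Spreading out by the group action then produces a $G^{(r,n)}$-invariant complex manifold $M_p$ and a $G^{(r,n)}$-invariant holomorphic function $f_p \colon M_p \to \mathbb{C}$ extending $f_p^0$, with critical locus equal to a $G^{(r,n)}$-invariant neighborhood $U_p$ of $p$ in $U^{(r,n)}$. The equivariance is automatic from the naturality of the construction, since $G^{(r,n)}$ acts on $U^{(r,n)}$ only by changing the frame and leaves the underlying object of $\aA_X$ unchanged. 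The main obstacle is passing from a formal Kuranishi model to a genuinely analytic one while maintaining $G^{(r,n)}$-equivariance simultaneously; both are standard but nontrivial steps executed by the arguments of~\cite{JS}.
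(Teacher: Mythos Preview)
Your proposal is correct in spirit but takes a genuinely different route from the paper. The paper does \emph{not} invoke the abstract Joyce--Song critical-chart machinery; instead it gives a completely explicit construction exploiting that $F$ is zero-dimensional. Concretely, one decomposes $F=\bigoplus_i F_i$ with $\Supp F_i=\{x_i\}$, chooses disjoint analytic balls $V_i\cong\mathbb{C}^3$ around the $x_i$, and observes that a length-$n_i$ sheaf on $\mathbb{C}^3$ is nothing but a triple of commuting $n_i\times n_i$ matrices $(X_i,Y_i,Z_i)$ together with cyclic vectors. The commutation relations are then exactly the critical locus of the trace potential
\[
f_p=\sum_i \tr(X_iY_iZ_i - Z_iY_iX_i),
\]
and the $G^{(r,n)}$-action is visibly linear on the matrix/vector data and leaves $f_p$ invariant. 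This buys an explicit $f_p$ (used later when discussing the Behrend function) and makes equivariance immediate, at the cost of being specific to zero-dimensional sheaves.

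Your abstract approach would work, but two points deserve care. First, \cite[Theorem~5.3]{JS} is formulated for $\Coh(X)$, and the paper explicitly says Proposition~\ref{loc.dis} \emph{provides an analogue} of that theorem for $\aA_X$; so you cannot simply cite it---you would have to rerun the gauge-theoretic or cyclic-$L_\infty$ argument for the two-term complexes in $\aA_X$. Second, your claim that equivariance is ``automatic from naturality'' because $G^{(r,n)}$ ``leaves the underlying object unchanged'' is not quite right: the $\GL(r,\mathbb{C})$ factor genuinely changes the morphism $s$ in $E_p=(\oO_X^{\oplus r}\stackrel{s}{\to}F)$, and passing from a formal or pointwise Kuranishi model to a $G^{(r,n)}$-equivariant analytic one over an invariant open set is exactly the nontrivial step that the paper's elementary description sidesteps.
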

\begin{proof}
Suppose that $p\in U^{(r, n)}$ corresponds to a diagram, 
$$\oO_X^{\oplus n} \twoheadrightarrow F \leftarrow \oO_X^{\oplus r},$$
such that $F\in \Coh_0(X)$ decomposes as 
$$F=\bigoplus_{i=1}^{k}F_i, \quad \Supp(F_i)=\{x_i\},
\quad \length F_i=n_i,$$
for distinct closed points $x_1, x_2, \cdots x_i \in X$
and $n_i \in \mathbb{Z}$. 
Let us take an analytic small open neighborhood 
$x_i \in V_i \subset X$ such that each $V_i$ is isomorphic
to $\mathbb{C}^3$ as a complex manifold, 
and $V_i \cap V_j=\emptyset$ for $i\neq j$. 
Note that we have
\begin{align}\label{pU}
p\in \{(\oO_X^{\oplus n} \twoheadrightarrow F' \leftarrow \oO_X^{\oplus r})
\in U^{(r, n)} : \Supp(F')\subset \coprod_{i}V_i\}, 
\end{align}
and define $p\in U_p\subset U^{(r, n)}$ to be the connected component 
of the RHS of (\ref{pU}), which contains $p$. 
Obviously $U_{p}$ is $G^{(r, n)}$-invariant analytic open 
subset of $U^{(r, n)}$. Restricting to each $V_i$, 
 giving a point on $U_p$
is equivalent to giving a collection of diagrams, 
\begin{align}\label{collect}
\oO_{V_i}^{\oplus n} \twoheadrightarrow F_i' \leftarrow \oO_{V_i}^{\oplus r}, 
\quad \length F_i'=n_i, 
\end{align} 
for each $1\le i\le k$
such that the induced morphism 
$$\mathbb{C}^{n}=H^{0}(\oO_X^{\oplus n})\to 
\bigoplus_{i=1}^{k}H^0(\oO_{V_i}^{\oplus n})
\to \bigoplus_{i=1}^{k}H^0(F_i'),$$
is an isomorphism. 
Since $V_i \cong \mathbb{C}^3$, giving such a collection 
of data (\ref{collect}) is equivalent to 
giving a point 
\begin{align}\label{MM}
\{(X_i, Y_i, Z_i, \{v_i^{(j)}\}_{j=1}^{n}, 
\{s_i^{(j)}\}_{j=1}^{r})\}_{i=1}^{k}
\in \prod_{i=1}^{k} M_{n_i}(\mathbb{C})^{\times 3}
\times (\mathbb{C}^{n_i})^{n}
\times (\mathbb{C}^{n_i})^{r},
\end{align}
satisfying 
\begin{align}\label{eq1}
&X_i Y_i=Y_i X_i, \quad X_i Z_i=Z_i X_i, \quad 
Y_i Z_i=Z_i Y_i, \quad 1\le i \le k, \\
\label{eq2}
&\det \left( v^{(1)}, v^{(2)}, 
\cdots, v^{(n)}\right)\neq 0.
\end{align}
Here $X_i, Y_i, Z_i$ are elements of $M_{n_i}(\mathbb{C})$, 
$v_i^{(j)}$, $s_i^{(j)}$ are elements of $\mathbb{C}^{n_i}$, 
and we have regarded 
$$v^{(j)}\cneq
\sum_{i=1}^{k}v_i^{(j)}\in \bigoplus_{i=1}^{k}\mathbb{C}^{n_i}=\mathbb{C}^n,$$
as a column vector of $M_n(\mathbb{C})$. 
We set $M_p$ to be an open subset of the RHS of (\ref{MM}), 
satisfying only (\ref{eq2}). 
Then the zero set of the equation (\ref{eq1})
is the critical locus of the 
holomorphic function $f_p \colon M_p \to \mathbb{C}$, 
$$f_p \left(\{(X_i, Y_i, Z_i, \{v_i^{(j)}\}_{j=1}^{n}, 
\{s_i^{(j)}\}_{j=1}^{r})\}_{i=1}^{k}\right)=
\sum_{i=1}^{k}\tr(X_i Y_i Z_i -Z_i Y_i X_i).$$
Obviously $G^{(r, n)}$ acts on $M_p$ from the right, 
$f_p$ is $G^{(r, n)}$-invariant, and there is a 
$G^{(r, n)}$-equivariant isomorphism between $U_p$ 
and $\{df_{p}=0\}\subset M_p$. 
\end{proof}
Let $Z\colon \Gamma \to \mathbb{C}$ be a 
stability condition on $\aA_X$. 
Let
\begin{align}\label{emb}
\mM^{(r, n)}(Z) \subset \oO bj^{(r, n)}(\aA_X),
\end{align}
be the substack of $Z$-semistable objects $E\in \aA_X$
with $\cl(E)=(r, n)$. 
By Proposition~\ref{prop:char}, we have 
\begin{align*}
\mM^{(r, n)}(Z_{-})&=\left\{\begin{array}{cc}
\oO bj^{(r, n)}(\aA_X), & r=0 \mbox{ or }n=0,\\
\emptyset, & \mbox{ otherwise.}
\end{array}\right. \\
\mM^{(r, n)}(Z_0)&=\oO bj^{(r, n)}(\aA_X).
\end{align*}
Here $Z_{\ast}$ is given by (\ref{Zast}). 
The moduli stack $\mM^{(r, n)}(Z_{+})$ 
is described as follows. 
\begin{lem}\label{Qrn}
There is a $\GL(r, \mathbb{C})$-invariant
Zariski open subset $Q^{(r, n)}\subset \Quot^{(n)}(\oO_X^{\oplus r})$
such that 
$$\mM^{(r, n)}(Z_{+})=[Q^{(r, n)}/\GL(r, \mathbb{C})].$$
\end{lem}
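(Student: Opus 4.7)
The plan is to define
$$Q^{(r,n)}\subset \Quot^{(n)}(\oO_X^{\oplus r})$$
(assuming $r\ge 1$) as the locus of surjections $s\colon \oO_X^{\oplus r}\twoheadrightarrow F$ satisfying the two conditions of Proposition~\ref{prop:char}(iii): (a) $H^{0}(s)\colon \mathbb{C}^{r}\to H^{0}(F)$ is injective, so that $V\cneq \Imm H^{0}(s)$ is $r$-dimensional (and $V$ automatically generates $F$ because $s$ is already surjective), and (b) for every proper non-zero subspace $W\subsetneq \mathbb{C}^{r}$, setting $F_{W}\cneq s(\oO_{X}\otimes W)\subset F$, one has $\length F_{W}/\dim W \ge n/r$. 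When $n=0$ both conditions degenerate and $Q^{(r,0)}=\Quot^{(0)}(\oO_X^{\oplus r})$ is a single reduced point. By Proposition~\ref{prop:char}(iii), the $\mathbb{C}$-points of $Q^{(r,n)}$ correspond bijectively to the underlying data of $Z_{+}$-semistable objects of class $(r,n)$.

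The first task will be to show that $Q^{(r,n)}$ is Zariski open in $\Quot^{(n)}(\oO_X^{\oplus r})$. Condition (a) is open by upper-semicontinuity of $\dim\Ker H^{0}(s)$. For condition (b), fix $0<d<r$ and set $Z_{d}\cneq Q^{(a)}\times \mathrm{Gr}(d,r)$, where $Q^{(a)}\subset \Quot^{(n)}(\oO_X^{\oplus r})$ is the open locus where (a) holds. Pulling back the universal rank-$d$ subbundle $\wW\subset \oO_{\mathrm{Gr}(d,r)}^{\oplus r}$ and the universal quotient $\widetilde{\sS}\colon \oO^{\oplus r}\to \widetilde{\fF}$ on $Q^{(a)}$, I would form on $Z_{d}\times X$ the composite
$$\widetilde{\wW}\hookrightarrow \oO_{Z_{d}\times X}^{\oplus r}\stackrel{\widetilde{\sS}}{\lr} \widetilde{\fF}.$$
Its cokernel $\kK$ commutes with base change and has fiber $F_{s}/F_{s,W}$ over $(s,W)$, of length $n-\length F_{s,W}$. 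Upper-semicontinuity of $\dim H^{0}$ of the fibers of $\kK$ (which are zero-dimensional sheaves on $X$) then implies that $(s,W)\mapsto \length F_{s,W}$ is lower-semicontinuous on $Z_d$. Hence the failure locus $\{(s,W):\length F_{s,W}<dn/r\}\subset Z_{d}$ is closed; its image in $Q^{(a)}$ is closed because the projection $Z_d\to Q^{(a)}$ is proper with fibers $\mathrm{Gr}(d,r)$. Removing the union over $d=1,\dots,r-1$ exhibits $Q^{(r,n)}$ as Zariski open.

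The $\GL(r,\mathbb{C})$-invariance of $Q^{(r,n)}$ is routine: under $s\mapsto s\cdot g$, subspaces transform as $W\mapsto g^{-1}W$ with $F_{s,W}=F_{s\cdot g,\,g^{-1}W}$, preserving both (a) and (b). For the stack identification, I would use the presentation $\oO bj^{(r,n)}(\aA_X)=[U^{(r,n)}/G^{(r,n)}]$ from (\ref{Inpati}). Let $\widetilde{Q}\subset U^{(r,n)}$ be the $G^{(r,n)}$-invariant open substack cut out by: the arrow $\oO_{S\times X}^{\oplus r}\to \fF$ in (\ref{Urn2}) is surjective on every fiber, and conditions (a), (b) hold fiberwise. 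By Proposition~\ref{prop:char}(iii) together with (\ref{emb}) this gives $\mM^{(r,n)}(Z_{+})=[\widetilde{Q}/G^{(r,n)}]$. The forgetful morphism $\widetilde{Q}\to \Quot^{(n)}(\oO_{X}^{\oplus r})$ dropping the left arrow $\oO^{\oplus n}\twoheadrightarrow \fF$ lands in $Q^{(r,n)}$, and its fiber over $s\colon \oO_X^{\oplus r}\twoheadrightarrow F$ is the set of trivializations $\mathbb{C}^{n}\simto H^{0}(F)$, which is a $\GL(n,\mathbb{C})$-torsor. Descending the $\GL(n,\mathbb{C})$ factor of the quotient yields $[\widetilde{Q}/G^{(r,n)}]=[Q^{(r,n)}/\GL(r,\mathbb{C})]$, as desired.

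The main technical obstacle will be the openness step, specifically controlling $\length F_{s,W}$ in a family: since image-formation of sheaf maps is not base-change compatible, one has to argue via the cokernel (which is) and then combine upper-semicontinuity of fiberwise $H^{0}$ with the properness of the Grassmannian projection to descend the closed failure condition to $\Quot^{(n)}(\oO_X^{\oplus r})$.
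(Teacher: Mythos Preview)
Your proposal is correct and follows essentially the same route as the paper: define $Q^{(r,n)}$ as the locus of $Z_{+}$-semistable quotients, verify openness and $\GL(r,\mathbb{C})$-invariance, and then use the presentation $\oO bj^{(r,n)}(\aA_X)=[U^{(r,n)}/G^{(r,n)}]$ together with the free $\GL(n,\mathbb{C})$-action on the surjective locus to descend to $[Q^{(r,n)}/\GL(r,\mathbb{C})]$. The only substantive difference is that the paper disposes of openness by citing the general openness-of-stability argument in~\cite[Theorem~3.20]{Tst3}, whereas you give a direct self-contained proof via the Grassmannian family and upper-semicontinuity of the cokernel length; your argument is valid and has the advantage of being elementary and explicit in this finite-length setting.
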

\begin{proof}
Let $\widetilde{U}^{(r, n)}\subset U^{(r, n)}$
be the open subset corresponding to diagrams, 
$$\oO_X^{\oplus n} \twoheadrightarrow F 
\stackrel{s}{\leftarrow} \oO_X^{\oplus r}, $$
such that $s$ is surjective. Then the action of 
the subgroup $\{\id\}\times \GL(n, \mathbb{C})\subset G^{(r, n)}$
on $U^{(r, n)}$ is free, and the 
quotient space is 
$$\widetilde{U}^{(r, n)}/\GL(n, \mathbb{C})=
\Quot^{(n)}(\oO_X^{\oplus r}).$$
We set $Q^{(r, n)}\subset \Quot^{(n)}(\oO_X^{\oplus r})$
to be the subset corresponds to quotients
$\oO_X^{\oplus r} \stackrel{s}{\twoheadrightarrow}
 F$ such that 
the associated two term complex 
$(\oO_X^{\oplus r} \stackrel{s}{\twoheadrightarrow} F)\in \aA_X$
is $Z_{+}$-semistable. The subset $Q^{(r, n)}$ is 
$\GL(r, \mathbb{C})$-invariant, and it is straightforward 
to see that $Q^{(r, n)}$ is open in $\Quot^{(n)}(\oO_X^{\oplus r})$. 
(e.g. use the arguments of the openness of stability 
in~\cite[Theorem~3.20]{Tst3}.)
By (\ref{Inpati}), the quotient stack of 
$Q^{(r, n)}$ by the action of $\GL(r, \mathbb{C})$ 
coincides with the desired stack $\mM^{(r, n)}(Z_{+})$. 
\end{proof}
\section{Hall algebras and Donaldson-Thomas invariants}\label{sec:Hall}
In this section, we review the result of Joyce-Song~\cite{JS}
applied in our abelian category $\aA_X$. 
\subsection{Notation}
In this subsection, we introduce some 
notation on algebraic groups, 
following~\cite{Joy5}. 
Let $G$ be an affine algebraic group over $\mathbb{C}$
with maximal torus $T^{G}$. We say $G$ is \textit{special}
if every principal $G$-bundles over $\mathbb{C}$ is locally 
trivial in the Zariski topology. 
For a subset $S\subset G$, the \textit{normalizer}
$N_G(S)$ and the \textit{centralizer} $C_G(S)$ 
of $S$ in $G$ are
\begin{align*}
N_G(S)&=\{g\in G : g^{-1}Sg=S\}, \\
C_{G}(S)&=\{g\in G : sg=gs \mbox{ for all }s\in S\},
\end{align*}
and the centre of $G$ is $C(G)\cneq C_{G}(G)$.  
For a subset $S\subset T^{G}$, note that 
$S\subset T^{G}\cap C(C_{G}(S))$. 
\begin{defi}\emph{{\bf \cite[Definition~5.5]{Joy5}}}
\emph{
We define the set $\qQ(G, T^G)$ to be the 
set of closed $\mathbb{C}$-subgroups $S$ of $T^G$, satisfying 
$$S=T^{G}\cap C(C_{G}(S)).$$
We say $G$ is very special if any $S\in \qQ(G, T^G)$ is 
special. }
\end{defi}
It is shown in~\cite[Lemma~5.6]{Joy5}
that $\qQ(G, T^{G})$ is a finite set, 
and any $S\in \qQ(G, T^{G})$ is written as an intersection 
of $T^{G}$ and $C_G(\{t_i\})$ for a finite set of points 
$t_1, \cdots, t_k \in G$. 
\begin{exam}\label{Ex1}\emph{
Suppose that $G=\GL(2, \mathbb{C})$,
and $\mathbb{G}_m^{2}\cong 
T^{G}\subset G$ is the subgroup of diagonal matrices. 
Then $\qQ(G, T^{G})$ consists of 
$T^{G}$ and the following subgroup. 
(cf.~\cite[Example~5.7]{Joy5}.)
\begin{align}\label{diag}
\mathbb{G}_m \cong \left\{ \left( \begin{array}{cc} 
t & 0 \\
0 & t
\end{array}\right) :
 t\in \mathbb{C}^{\ast} \right\}\subset T^{G}.
 \end{align}
 In particular $\GL(2, \mathbb{C})$ is a
 very special algebraic group.} 
\end{exam}
In~\cite{Joy5}, D.~Joyce introduces
an important rational number
$F(G, T^G, S)$ for a very special algebraic group $G$
and $S\in \qQ(G, T^G)$, as follows. 
\begin{defi}\emph{{\bf \cite[Definition~5.8]{Joy5}, 
\cite[Definition~6.7]{Joy5}}}\emph{
Let $G$ be a very special algebraic group. 
For $S\subset S'$ in $\qQ(G, T^G)$, we define 
$n_{T^G}^{G}(S, S')\in \mathbb{Z}$ to be}
$$n_{T^G}^{G}(S, S')=\sum_{S'\in A\subseteq \{S''\in \qQ(G, T^G) : 
S''\subset S'\}, \ \cap_{S''\in A}S''=S}
(-1)^{\lvert A \rvert -1},$$
\emph{and for $S\in \qQ(G, T^G)$, 
define $F(G, T^G, S)\in \mathbb{Q}$ by} 
$$F(G, T^G, S)=\lim_{t\to 1}
\sum_{\begin{subarray}{c}S'\in \qQ(G, T^G) \\
S\subset S'
\end{subarray}}\left\lvert \frac{N_G(T^G)}{C_G(S')\cap N_G(T^G)}\right\rvert ^{-1}
\cdot n_{T^G}^G(S, S') \frac{P_t(S)}{P_t(C_G(S'))}.$$
\end{defi}
Here for a quasi-projective $\mathbb{C}$-variety $Y$, 
the \textit{virtual Poincar\'e polynomial}
$P_t(Y)\in \mathbb{Q}[t]$ is defined by 
$$P_t(Y)=\sum_{j, k\ge 0}\dim (-1)^{k}W_j(H_{c}^{k}(Y, \mathbb{C}))t^{j},$$
where $W_{\ast}(H_{c}^{k}(Y, \mathbb{C}))$ is the weight
filtration on the compact support cohomology 
group $H_{c}^{k}(Y, \mathbb{C})$ introduced by Deligne. 
The existence of the limit $t\to 1$ is proved 
in~\cite[Theorem~6.6]{Joy5}.

\begin{exam}\label{exam:F}
\emph{
For $G=\GL(2, \mathbb{C})$, it is easy to 
calculate $F(G, T^G, S)$ as follows. 
(cf.~Example~\ref{Ex1}, \cite[Paragraph~6.2]{Joy5}.)}
\begin{align*}
F(G, T^G, T^G)=\frac{1}{2}, \quad
F(G, T^G, \mathbb{G}_m)=-\frac{3}{4}.
\end{align*}\emph{
Here $\mathbb{G}_m \subset T^G$ is given by 
(\ref{diag}).}
\end{exam}
\subsection{Hall algebra}
Let $X$ be a smooth projective Calabi-Yau 
3-fold over $\mathbb{C}$, and 
$\aA_X \subset D_X$ the abelian subcategory 
given by (\ref{written}). 
Here we introduce the Hall algebra
based on the algebraic 
stack $\oO bj(\aA_X)$, 
following~\cite[Definition~6.8]{Joy5}.
\begin{defi}\emph{
We define the $\mathbb{Q}$-vector space $\hH(\aA_X)$
to be spanned by symbols, 
$$[\xX \stackrel{f}{\to} \oO bj(\aA_X)], $$
where $\xX$ is an algebraic stack of 
finite type with affine geometric 
stabilizers, and $f$ is a morphism of stacks, 
with relations as follows. }
\begin{itemize}
\item \emph{For a closed substack $\yY \subset \xX$
and $\uU=\xX \setminus \yY$, we have} 
$$[\xX \stackrel{f}{\to} \oO bj(\aA_X)]
=[\yY \stackrel{f|_{\yY}}{\to} \oO bj(\aA_X)]
+[\uU \stackrel{f|_{\uU}}{\to} \oO bj(\aA_X)].$$
\item \emph{For a quasi-projective 
$\mathbb{C}$-variety $U$, we have} 
$$[\xX \times U 
 \stackrel{\pi_{\xX}\circ f}{\to} \oO bj(\aA_X)]
 =\chi(U)[\xX \stackrel{f}{\to} \oO bj(\aA_X)].$$
 \emph{Here $\pi_{\xX}\colon \xX\times U \to \xX$ is the 
 projection, and $\chi(U)=P_{t}(U)|_{t=1}\in \mathbb{Z}$.} 
 \item \emph{Let $U$ be a quasi-projective 
 $\mathbb{C}$-variety and $G$ a very special 
 algebraic group, which acts on $U$
 with maximal torus $T^G$. Then we have
 \begin{align}
 \label{relHall}[[U/G] \stackrel{f}{\to} \oO bj(\aA_X)]
 =\sum_{S\in \qQ(G, T^G)}F(G, T^G, S)
 [[U/S] \stackrel{f\circ \tau^{S}}{\to}\oO bj(\aA_X)].
 \end{align}
 Here $\tau^{S}\colon [U/S] \to [U/G]$ is a natural morphism.} 
\end{itemize}
\end{defi}
We denote by $\eE x(\aA_X)$ the stack of short
exact sequences in $\aA_X$. There are 
morphisms of stacks, 
$$p_i \colon \eE x(\aA_X) \lr \oO bj(\aA_X),$$
sending a short exact sequence $0\to A_1 \to A_2 \to A_3 \to 0$
to objects $A_i$ respectively. 
There is an associative product 
on $\hH(\aA_X)$ based on 
Ringel-Hall algebras, defined by 
$$[\xX \stackrel{f}{\to} \oO bj(\aA_X)]
\ast [\yY \stackrel{g}{\to} \oO bj(\aA_X)]
=[\zZ \stackrel{p_2 \circ h}{\to} \oO bj(\aA_X)],$$
where the morphism $h$ fits into the Cartesian square, 
$$\xymatrix{
\zZ \ar[r]^{h}\ar[d] & \eE x(\aA_X) \ar[r]^{p_2}\ar[d]^{(p_1, p_3)}
 & \oO bj(\aA_X). \\
 \xX \times \yY \ar[r]^{f\times g} \ar[r] & \oO bj(\aA_X)^{\times 2}
 }$$ 
We have the following. 
\begin{thm}\emph{\cite[Theorem~5.2]{Joy2}}
The $\ast$-product is well-defined and
associative with unit given by $[\Spec \mathbb{C} \to 
\oO bj(\aA_X)]$ which corresponds to $0\in \aA_X$. 
\end{thm}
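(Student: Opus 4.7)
Since the theorem is attributed to Joyce, my plan is to recast the argument for our specific abelian category $\aA_X$, following the Ringel--Hall algebra formalism in the stacky setting from~\cite{Joy2}. The plan has three components: well-definedness of $\ast$ on $\hH(\aA_X)$, associativity, and the unit axiom.

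To show $\ast$ is well-defined I would first argue that the stack $\eE x(\aA_X)$ parameterizing short exact sequences in $\aA_X$ is algebraic, locally of finite type with affine geometric stabilizers, and that the projection $(p_1,p_3) \colon \eE x(\aA_X) \to \oO bj(\aA_X)^{\times 2}$ is representable of finite type. Over a pair $(A_1,A_3)$ the fiber is essentially $[\Ext^1(A_3,A_1)/\Hom(A_3,A_1)]$, which is well-behaved since $\aA_X$ embeds in $D^b(\Coh(X))$ on a smooth projective variety and the relevant $\Ext$ groups are finite-dimensional; in particular the fiber product $\zZ$ in the definition of $\ast$ automatically lies in the correct class of stacks, so the bracket $[\zZ \to \oO bj(\aA_X)]$ is a legitimate element of $\hH(\aA_X)$. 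Next I would verify compatibility with each of the three defining relations of $\hH(\aA_X)$: the scissor relation follows from base change along each factor, the multiplicativity by $\chi(U)$ for a quasi-projective variety $U$ follows from the projection formula for virtual Poincar\'e polynomials, and compatibility with~(\ref{relHall}) reduces to the combinatorial identities for the rationals $F(G,T^G,S)$.

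For associativity, I would identify both $(a \ast b) \ast c$ and $a \ast (b \ast c)$ with the class represented by the stack parameterizing three-step filtrations $0 \subset A_1 \subset A_2 \subset A_3$ in $\aA_X$, with the morphism to $\oO bj(\aA_X)$ sending such data to the total object $A_3$. Iterated fiber products with $\eE x(\aA_X)$ naturally parameterize such flags (by a standard $2$-categorical computation in the stacky Cartesian diagram), and the two bracketings produce canonically isomorphic stacks over $\oO bj(\aA_X)$, yielding equality of classes. For the unit axiom, I would compute by base change that convolution with $[\Spec\mathbb{C} \to \oO bj(\aA_X)]$, mapped to the zero object $0 \in \aA_X$, picks out exact sequences with vanishing third (or first) term, so the fiber product collapses back to $\xX$ and its class.

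The hard part is the compatibility of $\ast$ with the relation~(\ref{relHall}). This is the genuinely new content of Joyce's construction in~\cite{Joy2} and requires the machinery of very special algebraic groups and the rationals $F(G,T^G,S)$ from~\cite{Joy5}: the naive stacky convolution produces fiber products whose stabilizer groups may have positive-dimensional non-toral parts, and~(\ref{relHall}) is precisely what is needed to replace them by intersections of centralizers of torus elements in an associative-compatible way. Once representability of $(p_1,p_3)$ and the finite-dimensionality of the $\Ext$ groups in $\aA_X$ are in place, the remaining steps are formal manipulations of Cartesian squares.
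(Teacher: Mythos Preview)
The paper does not prove this theorem at all: it is stated as a citation of \cite[Theorem~5.2]{Joy2} and no argument is given. Your sketch is a reasonable outline of the ingredients in Joyce's original proof (algebraicity and finite-type representability of $(p_1,p_3)$, identification of iterated products with flag stacks for associativity, and compatibility with the relation~(\ref{relHall}) via the combinatorics of $F(G,T^G,S)$), but there is nothing in the present paper to compare it against. If anything, you have supplied more than the paper does; just be aware that the genuinely delicate point---that the $\ast$-product respects the relation~(\ref{relHall})---is the heart of \cite{Joy2,Joy5} and your paragraph only gestures at it rather than carrying it out.
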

\subsection{Donaldson-Thomas invariant}
Let $Z\colon \Gamma \to \mathbb{C}$ be a 
stability condition on $\aA_X$. 
The embedding of the algebraic stack (\ref{emb}) 
defines an element 
$$\delta^{(r, n)}(Z)=[\mM^{(r, n)}(Z)\subset \oO bj(\aA_X)]
\in \hH(\aA_X).$$
In order to define counting invariants of $Z$-semistable
objects, we want to 
take a (weighted) Euler characteristic 
of the moduli stack 
$\mM^{(r, n)}(Z)$. However 
in general, geometric points on the 
moduli stack $\mM^{(r, n)}(Z)$ 
have non-trivial stabilizers, hence 
its Euler characteristic does not make sense. 
Instead 
we take the `logarithm' of $\delta^{(r, n)}(Z)$ in 
$\hH(\aA_X)$ to kill non-trivial stabilizers.  
\begin{defi}\emph{{\bf \cite[Definition~3.18]{Joy4}}}
\emph{We define $\epsilon^{(r, n)}(Z)\in \hH(\aA_X)$ 
to be} 
\begin{align}\label{vare}
\epsilon^{(r, n)}(Z)=\sum_{\begin{subarray}{c}
l\ge 0, \ (r_1, n_1)+\cdots +(r_l, n_l)=(r, n), \\
Z(r_i, n_i)\in \mathbb{R}_{>0}
Z(r, n) \emph{ for all }i.\end{subarray}}
\frac{(-1)^{l-1}}{l}\delta^{(r_1, n_1)}(Z)\ast \cdots \ast 
\delta^{(r_l, n_l)}(Z).
\end{align}
\end{defi}
Since $\delta^{(r, n)}(Z)$ is non-zero only 
if $r\ge 0$ and $n\ge 0$, the sum (\ref{vare}) 
is a finite sum. 
Also if $r$ and $n$ are coprime, 
then $\epsilon^{(r, n)}(Z)=\delta^{(r, n)}(Z)$.
The important fact~\cite[Corollary~5.10]{Joy2}, \cite[Theorem~8.7]{Joy3}
 is that $\epsilon^{(r, n)}(Z)$
is supported on 
`virtual indecomposable objects', and 
written as 
\begin{align}\label{Ui}
\epsilon^{(r, n)}(Z)=\sum_{i=1}^{m}c_i[U_i 
\times [\Spec \mathbb{C}/\mathbb{G}_m] \stackrel{f_i}{\to}
\oO bj(\aA_X)],
\end{align} 
for quasi-projective $\mathbb{C}$-varieties 
$U_1, \cdots, U_m$, and $c_1, \cdots, c_m \in \mathbb{Q}$. 
Now the (weighted) Euler characteristic of $\epsilon^{(r, n)}(Z)$
makes sense. 
\begin{defi}\emph{
Suppose that $\epsilon \in \hH(\aA_X)$ is written as 
\begin{align}\label{Ui2}
\epsilon=
\sum_{i=1}^{m}c_i[U_i 
\times [\Spec \mathbb{C}/\mathbb{G}_m] \stackrel{f_i}{\to}
\oO bj(\aA_X)]. 
\end{align}
For a constructible function $\mu \colon \oO bj(\aA_X) \to \mathbb{Z}$, 
we define $\chi(\epsilon, \mu)\in \mathbb{Q}$ to be 
\begin{align*}
\chi(\epsilon, \mu)=
\sum_{i=1}^{m}c_i \sum_{k\in \mathbb{Z}}
\chi(f_i^{-1}\mu^{-1}(k)).
 \end{align*}}
\end{defi}

Next recall that for any $\mathbb{C}$-scheme $U$, K.~Behrend~\cite{Beh} 
associates a canonical constructible function
$\nu \colon U \to \mathbb{Z}, $
satisfying the following. 
\begin{itemize}
\item For $p\in U$, suppose that there is 
an analytic open neighborhood $p\in U_p$,
a complex manifold $M_p$ with $U_p\subset M_p$, 
and a holomorphic function $f_p \colon M_p \to \mathbb{C}$
such that $U_p=\{df_{p}=0\}$. Then 
$$\nu(p)=(-1)^{\dim M_p}(1-\chi(M_p(f_p))).$$
Here $M_p(f_p)$ is the Milnor fiber of $f_p$ at $p$. 
\item If $U$ has a symmetric perfect obstruction 
theory with zero dimensional virtual cycle 
$U^{\rm{vir}}$, we have 
$$\int_{U^{\rm{vir}}}1=\int_{U}\nu \ d\chi.$$
\end{itemize}
The notion of Behrend's constructible function 
can be easily extended to an arbitrary algebraic stack. 
(cf.~\cite[Proposition~4.4]{JS}.)
Hence we have the Behrend constructible function, 
$$\nu \colon \oO bj(\aA_X) \to \mathbb{Z}.$$
Explicitly using the notation of (\ref{Inpati}) and 
Proposition~\ref{loc.dis}, 
we have
$$\nu(p)=(-1)^{n+r+nr}(1-\chi(M_p(f_p))),$$
for $p\in U^{(r, n)}$. 
We then define $\DT(r, n)\in \mathbb{Q}$ as follows. 
(cf.~\cite[Definition~5.13]{JS}.)
\begin{defi}\label{def:DT}
\emph{
We define $\DT(r, n) \in \mathbb{Q}$ to be 
$$\DT(r, n)=\chi(\epsilon^{(r, n)}(Z_{+}), -\nu).$$}
\end{defi}
Here we need to change the sign of the Behrend function. 
This is basically because that the Behrend functions 
on the 
variety $M$ 
and on the stack $M\times [\Spec \mathbb{C}/\mathbb{G}_m]$
have the different sign.

\begin{rmk}\label{rmk:Mac}
(i) If $r=1$, then $\DT(1, n)$ coincides with 
the Donaldson-Thomas invariant counting points, 
studied and calculated in~\cite{MNOP}, \cite{Li}, \cite{BBr}, \cite{LP}. 
The result is 
$$\sum_{n\ge 0}\DT(1, n)q^n=M(-q)^{\chi(X)},$$
where $M(q)$ is the MacMahon function, 
\begin{align}\label{Mac}
M(q)=\prod_{k\ge 1}\frac{1}{(1-q^k)^k}.
\end{align}
(ii) For $n=0$, the invariant $\DT(r, 0)$ is 
easily shown to be (cf.~\cite[Example~6.2]{JS},~\cite[Paragraph~6.5]{K-S},)
\begin{align}\label{DTr0}
\DT(r, 0)=\frac{1}{r^2}.
\end{align}
(iii) For $r=0$, the invariant $\DT(0, n)$ is 
computed in~\cite[Paragraph~6.3]{JS}, \cite[Paragraph~6.5]{K-S}, 
\cite[Remark~8.13]{Tcurve1} using the 
wall-crossing formula. The result is 
\begin{align}\label{form:exp}
\exp\left(\sum_{n\ge 0}(-1)^{n-1}\DT(0, n)q^n\right)=M(-q)^{\chi(X)},
\end{align}
hence 
\begin{align}\label{DT0n}
\DT(0, n)=-\chi(X)\sum_{m\ge 0, m|n}\frac{1}{m^2}.
\end{align}
\end{rmk}
\subsection{Euler characteristic version}
In Section~\ref{sec:int}, we will also use the 
Euler characteristic version of 
counting invariants of $Z_{+}$-semistable 
objects in $\aA_X$, 
defined as follows. 
\begin{defi}\emph{
We define 
$\Eu(r, n)\in \mathbb{Q}$ to be 
\begin{align*}
\Eu(r, n)=\chi(\epsilon^{(r, n)}(Z_{+}), 1).
\end{align*}
Here $1$ is the constant constructible function on 
$\oO bj(\aA_X)$ which takes the value at $1$.}
\end{defi}
Similarly to $\DT(r, n)$, 
the invariant $\Eu(r, n)$ 
is already computed when $r=0$ or $n=0$. 
The result is (cf.~\cite[Example~6.2]{JS}, ~\cite[Remark~5.14]{Tcurve1},)
\begin{align}
\label{Eu1}
\Eu(r, 0)&=\frac{(-1)^{r-1}}{r^2}, \\
\label{Eu2}
\Eu(0, n)&=\chi(X) \sum_{m\ge 0, m|n}\frac{1}{m^2}.
\end{align}

\section{Computation of $\DT(2, n)$}
In this section, we deduce the generating 
series of $\DT(2, n)$ using the
wall-crossing formula
of DT-invariants. 
\subsection{Combinatorial coefficients}
In this subsection, we introduce
some notation which will be used 
in describing the wall-crossing formula. 
For $\Gamma=\mathbb{Z}\oplus \mathbb{Z}$, we set 
$$C(\Gamma)=\{(r, n) \in \Gamma \setminus \{0\} : r\ge 0, \ n\ge 0\}.$$
Define $\mu \colon C(\Gamma) \to \mathbb{Q}\cup \{\infty\}$ 
to be $\mu(r, n)=n/r$. 
\begin{defi}\label{def:s}
\emph{
For $l\ge 1$, we define the map 
$$s_{l} \colon C(\Gamma)^{l} \lr \{0, \pm 1 \}, $$
as follows. Suppose that $v_1, \cdots, v_l \in C(\Gamma)^{l}$ 
satisfies one of (a) or (b) for each $i$, }

\emph{(a) $\mu(v_i)>\mu(v_{i+1})$ and $\mu(v_1+\cdots +v_i)\ge  
\mu(v_{i+1}+\cdots +v_{l})$.} 

\emph{(b) $\mu(v_i)\le \mu(v_{i+1})$ and $\mu(v_1+\cdots +v_i)<
\mu(v_{i+1}+\cdots +v_{l})$.} 

\emph{Then $s_l(v_1, \cdots, v_l)=(-1)^{k}$, 
where $k$ is the number of $i=1, \cdots, l-1$
satisfying (b). 
Otherwise $s_l(v_1, \cdots, v_l)=0$.} 
\end{defi}
\begin{defi}\emph{
For $l\ge 1$, we define the map 
$$u_{l} \colon C(\Gamma)^{l} \lr \mathbb{Q},$$
as follows,} 
\begin{align}\notag
u_l(v_1, \cdots, v_l)=
\sum_{1\le l'' \le l' \le l}
\sum_{\begin{subarray}{c}\psi \colon \{1, \cdots, l\} 
\to \{1, \cdots, l'\}, \
\xi \colon \{1, \cdots, l'\} \to \{1, \cdots, l''\}, \\
\psi, \xi \emph{ are non-decreasing surjective maps,} \\
\mu(v_i)=\mu(v_j) \emph{ if }\psi(i)=\psi(j), \\
\mu(\sum_{k\in (\xi \circ \psi)^{-1}(i)}v_k)=
\mu(\sum_{k\in (\xi \circ \psi)^{-1}(j)}v_k) \emph{ for any }
i, j. \end{subarray}} \\ \label{sum:u}
\prod_{a=1}^{l''}s_{\lvert \xi^{-1}(a)\rvert}\left( \left\{
\sum_{k\in \psi^{-1}(j)}v_k \right\}_{j\in \xi^{-1}(a)}\right)
\frac{(-1)^{l''+1}}{l''}\prod_{b=1}^{l'}\frac{1}
{\rvert \psi^{-1}(b)\rvert !}.
\end{align}
\end{defi}
We introduce the notion of 
bi-colored weighted ordered vertex, as follows. 
\begin{defi}\emph{
We call a data 
\begin{align}\label{graph}
\Lambda=(V, \pi, v, \le ),
\end{align}
\emph{bi-colored weighted ordered vertex} if it satisfies the following.} 
\begin{itemize}
\item \emph{$V$ is a finite set.} 
\item \emph{$\pi \colon V \to \{\bullet, \circ\}$ is a map, 
where $\{\bullet, \circ\}$ is a set with two elements.} 
\item \emph{$v$ is a map $v\colon V\to \mathbb{Z}_{\ge 1}$.} 
\item \emph{$\le$ is a total order on $V$.}
\end{itemize}
\end{defi}
Let $\Lambda$ be a data (\ref{graph}) with $l=\lvert V \rvert$. 
The total order $\le$ on $V$ gives 
an identification between $V$ and $\{1, \cdots, l\}$. 
We set $V_{\bullet}$ and $V_{\circ}$ to be 
\begin{align*}
V_{\bullet}&=\{v\in V : \pi(v)=\bullet\}, \\
V_{\circ}&=\{v\in V : \pi(v)=\circ\}.
\end{align*}
We set
$v_i \in C(\Gamma)$ to be  
\begin{align*}
v_i=\left\{\begin{array}{cc}
(v(i), 0), & \mbox{ if }i\in V_{\bullet}, \\
(0, v(i)), & \mbox{ if }i\in V_{\circ}.
\end{array}\right. 
\end{align*}
We set $s(\Lambda)\in \{0, \pm 1 \}$ and 
$u(\Lambda)\in \mathbb{Q}$ to be 
\begin{align*}
s(\Lambda)=s_l(v_1, \cdots, v_l), \quad 
u(\Lambda)=u_l(v_1, \cdots, v_l).
\end{align*}
Also we set 
$$r(\Lambda)=\sum_{i\in V_{\bullet}}v(i), \quad 
n(\Lambda)=\sum_{i\in V_{\circ}}v(i).$$
We define $\DT(\Lambda)\in \mathbb{Q}$
and $\Eu(\Lambda)\in \mathbb{Q}$ to be 
\begin{align*}
\DT(\Lambda)&=\prod_{i\in V_{\bullet}}\DT(v(i), 0)
\prod_{i\in V_{\circ}}\DT(0, v(i)), \\
\Eu(\Lambda)&=\prod_{i\in V_{\bullet}}\Eu(v(i), 0)
\prod_{i\in V_{\circ}}\Eu(0, v(i)).
\end{align*}
\begin{defi}\emph{
Let $\Lambda=(V, \pi, v, \le)$ be a bi-colored
weighted ordered vertex. We define the 
set $\eE(\Lambda)$ to be the set of data 
$$(E, s, t),$$
satisfying the following.}
\begin{itemize}
\item \emph{$E$ is a finite set and $s$, $t$ are 
maps $E\to V$, i.e. the data $(V, E, s, t)$
determines a quiver. The geometric realization of 
this quiver is connected and simply connected.} 
\item \emph{For any $e\in E$, we have $\pi s(e) \neq \pi t(e)$.}
\item \emph{For any $e\in E$, we have $s(e)<t(e)$ with 
respect to the total order $\le$ on $V$.}
\end{itemize}
\end{defi}
For $(E, s, t)\in \eE(\Lambda)$, we set $E_{\bullet \to \circ}$ to be
$$E_{\bullet \to \circ}=\{ e\in E : \pi s(e)=\bullet\}.$$

\subsection{Combinatorial descriptions of $\DT(r, n)$, $\Eu(r, n)$}
Using the combinatorial data given in the 
previous subsection, we can
describe the invariant $\DT(r, n)$ as follows. 
\begin{thm}\label{thm:DTrn}
We have the following formula. 
\begin{align}\notag
\DT(r, n)&=\sum_{\begin{subarray}{c}
\Lambda=(V, \pi, v, \le) 
\emph{ is a bi-colored }
\\
\emph{ weighted ordered vertex with }\\
r(\Lambda)=r, \ n(\Lambda)=n.
\end{subarray} }
(-1)^{rn}
 u(\Lambda) 
\DT(\Lambda) \\ \label{DTrn}
& \qquad \qquad 
\left(-\frac{1}{2}\right)^{\lvert V \rvert -1}
\sum_{(E, s, t)\in \eE(\Lambda)}
(-1)^{\lvert E_{\bullet \to \circ}
 \rvert} 
\prod_{e\in E}v(s(e))v(t(e)).
\end{align}
\end{thm}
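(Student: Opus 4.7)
The strategy is to apply the Joyce-Song wall-crossing formula to transport the computation of $\epsilon^{(r,n)}(Z_+)$ to $Z_-$, where by Proposition~\ref{prop:char}~(i) the semistable objects split cleanly into ``pure rank'' pieces $\oO_X^{\oplus r_i}$ of class $(r_i,0)$ (identified with vertices in $V_\bullet$) and ``pure length'' pieces $F_i[-1]$ with $F_i \in \Coh_0(X)$ of class $(0,n_i)$ (identified with $V_\circ$). A bi-colored weighted ordered vertex $\Lambda = (V, \pi, v, \le)$ is thus precisely an ordered decomposition of $(r,n)$ into such pure classes, tagged by their stability type; the rank-zero and length-zero contributions $\DT(v(i),0)$ and $\DT(0,v(i))$ are already known (Remark~\ref{rmk:Mac}).

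I would first invoke Joyce's wall-crossing formula to express $\bar\epsilon^{(r,n)}(Z_+)$ as a finite $\mathbb{Q}$-linear combination of iterated $\ast$-products of the $\bar\epsilon^{v_i}(Z_-)$ in $\hH(\aA_X)$, with combinatorial coefficients $U(v_1,\ldots,v_l;Z_-,Z_+)$ built from Joyce's sign function $S$ via inclusion--exclusion over refinements. Unwinding this definition as a double refinement---first grouping by equal $Z_-$-arguments (the map $\psi$) and then by equal $Z_+$-arguments (the map $\xi$)---identifies $U$ with the function $u_l$ of (\ref{sum:u}), the inner products of $s_l$ factors matching Joyce's $S$ applied to the aggregated classes across the wall $\arg Z(1,0) = \arg Z(0,1)$. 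Next I would apply the integration map from $\hH(\aA_X)$ to the DT Lie algebra of Joyce-Song: each $\bar\epsilon^{v_i}(Z_-)$ integrates to (up to sign) $\DT(v_i)\,\tilde\lambda^{v_i}$, and each $\ast$-product becomes the corresponding Lie bracket, which by Serre duality on the Calabi-Yau threefold $X$ is governed by the antisymmetric Euler pairing $\chi((r_1,n_1),(r_2,n_2)) = n_1 r_2 - r_1 n_2$. Crucially this pairing vanishes between classes of the same color, so the iterated bracket $[\tilde\lambda^{v_1},[\tilde\lambda^{v_2},[\cdots]]]$ expands by the standard Joyce-Song identity as a sum over spanning trees $(E,s,t)$ on $V$ with prefactor $(-1/2)^{|V|-1}$, only cross-color edges surviving, each contributing $\chi(v_{s(e)},v_{t(e)}) = \mp v(s(e))v(t(e))$; collecting these signs yields $(-1)^{|E_{\bullet\to\circ}|}\prod_e v(s(e))v(t(e))$. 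The overall factor $(-1)^{rn}$ arises from the $(-1)^{\chi(\alpha,\beta)}$ sign in each Joyce-Song Lie bracket accumulated over the $|V|-1$ edges of a tree, combined with the $-\nu$ Behrend convention in Definition~\ref{def:DT}.

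The main obstacle is the precise identification of Joyce's combinatorial coefficient $U$ with $u_l$: unwinding the double sum in (\ref{sum:u}) and matching it with Joyce's recursive inclusion--exclusion definition of $U$ in terms of $S$ for the single wall-crossing $Z_- \to Z_+$ is a careful bookkeeping exercise. A subsidiary difficulty is verifying the tree-sum expansion of iterated Lie brackets with the coefficient $(-1/2)^{|V|-1}$, which requires invoking the specific form of Joyce's integration map into the DT Lie algebra whose bracket is determined by a bilinear pairing, so that iterated brackets collapse to spanning-tree sums rather than sums over all connected graphs. Finally, the several overlapping sign conventions (Behrend function $-\nu$, the shift $[-1]$ in the construction of $\aA_X$, Serre duality on CY3, and the Joyce-Song bracket sign $(-1)^\chi$) must be tracked simultaneously so that they combine to the single clean prefactor $(-1)^{rn}$.
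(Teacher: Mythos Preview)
Your approach is essentially the same as the paper's: apply the Joyce--Song wall-crossing formula between $Z_-$ and $Z_+$, use that $Z_-$-semistable objects are pure (Proposition~\ref{prop:char}(i)) so the input invariants are the known $\DT(r',0)$ and $\DT(0,n')$, and simplify the resulting tree sum using the vanishing of the antisymmetric Euler pairing $\chi$ on same-color classes.

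Two points of emphasis are off, however. First, what you call the ``main obstacle'' --- matching Joyce's $U$-coefficient with $u_l$ --- is not an obstacle at all: the paper simply \emph{defines} $u_l$ to be Joyce's combinatorial coefficient (the sum~(\ref{sum:u}) is copied verbatim from~\cite{Joy4}), so there is nothing to verify. Second, you underplay the genuine technical point. Joyce--Song's wall-crossing theorem~\cite[Theorem~5.16]{JS} is proved for $\Coh(X)$, not for the tilted heart $\aA_X$, and the integration map to the DT Lie algebra (which converts $\ast$-products to brackets weighted by $\chi$) relies on the Behrend function identities~\cite[Theorem~5.9]{JS}. The paper's proof is almost entirely devoted to observing that the two inputs to those identities --- the Euler-form equation~(\ref{RRSe}) via Riemann--Roch and Serre duality, and the local description of the moduli stack as a critical locus (Proposition~\ref{loc.dis}) --- carry over to $\aA_X$. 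You mention Serre duality but not Proposition~\ref{loc.dis}; without it one cannot invoke the Behrend-weighted Lie algebra homomorphism, and the passage from ``$\ast$-product'' to ``Lie bracket governed by $\chi$'' that you rely on would be unjustified for the $\nu$-weighted invariants.
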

\begin{proof}
Let $\chi \colon \Gamma \times \Gamma \to \mathbb{Z}$
be 
$$\chi((r, n), (r', n'))=rn'-r'n.$$
For $E, F\in \aA_X$, we have 
\begin{align}\notag
\chi(\cl(E), \cl(F))&=\dim \Hom(E, F)-\dim \Ext ^1(E, F) \\
\label{RRSe}
&\qquad +\dim \Ext^1(F, E)-\dim \Hom(F, E),\end{align}
by the Riemann-Roch theorem and the Serre duality. 
The equation (\ref{RRSe}) provides
an analogue of~\cite[Equation~(39)]{JS}
and Proposition~\ref{loc.dis} provides an analogue of~\cite[Theorem~5.3]{JS}.
The proof of the Behrend function identity given 
in~\cite[Theorem~5.9]{JS}
depends on these two properties, hence
the analogue of~\cite[Theorem~5.9]{JS} also 
holds for our abelian category $\aA_X$. 
Then we can apply the proof of~\cite[Theorem~5.16]{JS}
for stability conditions $Z_{\pm} \colon \Gamma \to \mathbb{C}$, 
and obtain the same formula given in~\cite[Theorem~5.6]{JS}. 
Noting that 
\begin{align*}
\chi((r, 0), (r', 0))=0, \quad 
\chi((0, n), (0, n'))=0, 
\end{align*}
we obtain the formula (\ref{DTrn}). 
\end{proof}
The formula for $\Eu(r, n)$ is similarly 
obtained by using~\cite[Theorem~6.28]{Joy4} 
instead of~\cite[Theorem~5.16]{JS}. 
\begin{thm}\label{thm:Eurn}
We have the following formula. 
\begin{align}\notag
\Eu(r, n)&=\sum_{\begin{subarray}{c}
\Lambda=(V, \pi, v, \le) 
\emph{ is a bi-colored }
\\ 
\emph{ weighted ordered vertex with }\\
r(\Lambda)=r, \ n(\Lambda)=n.
\end{subarray} }
 u(\Lambda) 
\Eu(\Lambda) \\ \label{Eurn}
& \qquad \qquad 
\left(\frac{1}{2}\right)^{\lvert V \rvert -1}
\sum_{(E, s, t)\in \eE(\Lambda)}
(-1)^{\lvert E_{\bullet \to \circ}
 \rvert} 
\prod_{e\in E}v(s(e))v(t(e)).
\end{align}
\end{thm}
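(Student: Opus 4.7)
The plan is to mirror the argument used for Theorem~\ref{thm:DTrn}, with the Joyce-Song weighted wall-crossing formula~\cite[Theorem~5.16]{JS} replaced by Joyce's unweighted wall-crossing formula~\cite[Theorem~6.28]{Joy4}. All structural inputs---the local presentation of $\oO bj^{(r,n)}(\aA_X)$ as a critical locus (Proposition~\ref{loc.dis}), the Euler pairing identity~(\ref{RRSe}), the classification of $Z_\pm$-semistable objects in Proposition~\ref{prop:char}(i), and the combinatorial bookkeeping encoded in $u_l$, $s_l$ and $\eE(\Lambda)$---are identical in both settings.

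First I would apply~\cite[Theorem~6.28]{Joy4} to rewrite $\epsilon^{(r,n)}(Z_+)$ in $\hH(\aA_X)$ as a sum of iterated $\ast$-products of classes $\epsilon^{(r_i,n_i)}(Z_-)$ with coefficients $u_l(v_1,\dots,v_l)$. By Proposition~\ref{prop:char}(i), every $Z_-$-semistable object lies in either the pure-rank class $(r_i,0)$ or the pure-length class $(0,n_i)$, so after evaluating $\chi(\cdot,1)$ on each product of $\epsilon$-factors one recovers exactly $\Eu(\Lambda)=\prod_{i\in V_{\bullet}}\Eu(v(i),0)\prod_{i\in V_{\circ}}\Eu(0,v(i))$. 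The Ext-pairing entering the wall-crossing transforms into the edge weights $v(s(e))v(t(e))$, and the vanishings
\begin{equation*}
\chi((r,0),(r',0))=0,\qquad \chi((0,n),(0,n'))=0
\end{equation*}
force every edge in a contributing graph $(E,s,t)\in\eE(\Lambda)$ to connect endpoints of distinct color, producing the sign $(-1)^{|E_{\bullet\to\circ}|}$ from orientation.

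The only genuine departure from the $\DT$-proof lies in the overall signs. In the Behrend-weighted case, the Behrend function identity~\cite[Theorem~5.9]{JS} contributes an extra factor $(-1)^{\chi(\alpha,\beta)}$ at every extension, which collectively yields the global $(-1)^{rn}$ and promotes $(1/2)^{|V|-1}$ to $(-1/2)^{|V|-1}$. Since the Euler characteristic weighting uses the constant function $1$ rather than $-\nu$, no such identity is invoked and these extra signs are absent, giving~(\ref{Eurn}) directly from the Joyce formula.

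The main obstacle is verifying that the hypotheses of~\cite[Theorem~6.28]{Joy4}---permissibility of $(Z_\pm,\aA_X,\cl)$, boundedness of the strata $\mM^{(r,n)}(Z_\pm)$, and finiteness of the resulting sum---hold in our setup. These are established by the same arguments that justify applying~\cite[Theorem~5.16]{JS} in Theorem~\ref{thm:DTrn}, combined with Lemma~\ref{Qrn} to control the $Z_+$-side and the observation that, for fixed $(r,n)$, only finitely many decompositions $(r_1,n_1)+\cdots+(r_l,n_l)=(r,n)$ with nonzero $\epsilon^{(r_i,n_i)}(Z_-)$ arise from bi-colored weighted ordered vertices $\Lambda$ with $r(\Lambda)=r$ and $n(\Lambda)=n$.
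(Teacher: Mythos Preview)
Your proposal is correct and follows essentially the same approach as the paper, which simply states that the formula for $\Eu(r,n)$ is obtained analogously to Theorem~\ref{thm:DTrn} by invoking~\cite[Theorem~6.28]{Joy4} in place of~\cite[Theorem~5.16]{JS}. One small remark: for the Euler-characteristic version neither Proposition~\ref{loc.dis} nor the Behrend function identity is actually needed as an input (you note this yourself in your penultimate paragraph), so you could drop that from your list of structural ingredients.
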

As a corollary, we have the following. 
\begin{cor}\label{cor:DT=Eu}
We have 
\begin{align}\label{DT=Eu}\DT(r, n)=(-1)^{rn+r-1}\Eu(r, n).
\end{align} 
\end{cor}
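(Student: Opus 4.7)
The plan is to compare the formulas (\ref{DTrn}) and (\ref{Eurn}) from Theorems~\ref{thm:DTrn} and \ref{thm:Eurn} term-by-term, indexed by the bi-colored weighted ordered vertices $\Lambda = (V, \pi, v, \le)$ with $r(\Lambda) = r$ and $n(\Lambda) = n$. The inner sum over $(E, s, t) \in \eE(\Lambda)$, the combinatorial coefficient $u(\Lambda)$, and the product $\prod_{e \in E} v(s(e))v(t(e))$ all appear identically in both formulas. Thus the claim reduces to showing that the ratio of the remaining factors equals $(-1)^{rn+r-1}$ uniformly in $\Lambda$.

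First I would extract the $\Lambda$-dependent sign ratio. The outer scalar in (\ref{DTrn}) is $(-1)^{rn}\left(-\tfrac{1}{2}\right)^{|V|-1}\DT(\Lambda)$, while that in (\ref{Eurn}) is $\left(\tfrac{1}{2}\right)^{|V|-1}\Eu(\Lambda)$. Dividing, the factor of $(1/2)^{|V|-1}$ cancels and I obtain
\[
(-1)^{rn} \cdot (-1)^{|V|-1} \cdot \frac{\DT(\Lambda)}{\Eu(\Lambda)}.
\]
It thus suffices to compute $\DT(\Lambda)/\Eu(\Lambda)$ using the already-known base cases.

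The base cases are supplied by the identities recalled in Remark~\ref{rmk:Mac} and the formulas (\ref{Eu1}), (\ref{Eu2}): from (\ref{DTr0}) and (\ref{Eu1}) we have $\DT(v(i),0)/\Eu(v(i),0) = (-1)^{v(i)-1}$ for $i \in V_\bullet$, and from (\ref{DT0n}) and (\ref{Eu2}) we have $\DT(0,v(i))/\Eu(0,v(i)) = -1$ for $i \in V_\circ$. Multiplying these over $V$, and using $r(\Lambda) = \sum_{i \in V_\bullet} v(i) = r$,
\[
\frac{\DT(\Lambda)}{\Eu(\Lambda)} = (-1)^{\sum_{i\in V_\bullet}(v(i)-1)}\,(-1)^{|V_\circ|} = (-1)^{r - |V_\bullet| + |V_\circ|}.
\]

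Finally I would combine the signs. Writing $|V| = |V_\bullet| + |V_\circ|$, the total sign equals
\[
(-1)^{rn}\,(-1)^{|V_\bullet|+|V_\circ|-1}\,(-1)^{r-|V_\bullet|+|V_\circ|}
= (-1)^{rn + r - 1 + 2|V_\circ|} = (-1)^{rn+r-1},
\]
which is independent of $\Lambda$. Pulling this scalar outside the sum in (\ref{DTrn}) yields (\ref{DT=Eu}). There is no substantial obstacle here: all the heavy lifting has been done in Theorems~\ref{thm:DTrn} and \ref{thm:Eurn}, and the only potential pitfall is bookkeeping the signs in the base-case formulas (\ref{DTr0}), (\ref{Eu1}), (\ref{DT0n}), (\ref{Eu2}) correctly and noting that the $|V_\circ|$ contributions cancel modulo $2$.
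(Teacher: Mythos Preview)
Your proof is correct and follows essentially the same approach as the paper: both arguments compare the formulas of Theorems~\ref{thm:DTrn} and~\ref{thm:Eurn} term by term, using the base-case identities (\ref{DTr0}), (\ref{DT0n}), (\ref{Eu1}), (\ref{Eu2}) to show that $\DT(\Lambda)/\Eu(\Lambda)$ is a sign depending only on $r$ and $|V|$. The paper records this intermediate step as $\DT(\Lambda)=(-1)^{|V|+r}\Eu(\Lambda)$, which agrees modulo~$2$ with your exponent $r-|V_\bullet|+|V_\circ|$.
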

\begin{proof}
By the formulas (\ref{DTr0}), (\ref{DT0n}), 
(\ref{Eu1}) and (\ref{Eu2}), 
we have 
$$\DT(\Lambda)=(-1)^{\lvert V \rvert +r}\Eu(\Lambda),$$
for a bi-colored weighted ordered vertex $\Lambda=(V, \pi, v, \le)$
with $r(\Lambda)=r$. 
Applying formulas (\ref{DTrn}) and (\ref{Eurn}), we obtain 
(\ref{DT=Eu}). 
\end{proof}

\subsection{Computation of $s(\Lambda)$}
In this subsection, we compute $s(\Lambda)$ 
for a data (\ref{graph}) with $r(\Lambda)=2$. 
Let us take a data $(\ref{graph})$ 
with $\lvert V \rvert =l$ and 
\begin{align}\label{rn}
r(\Lambda)=2, \quad n(\Lambda)=n.
\end{align}
We fix an identification 
between $V$ and $\{1, \cdots, l\}$ induced by 
the total order $\le$.  
We denote by $\pi(\Lambda)$ the sequence of 
$\bullet$ and $\circ$, given by 
$$\pi(\Lambda)=\pi(1) \ \pi(2) \ \cdots \ \pi(l).$$
Note that we have $\lvert V_{\bullet} \rvert \le 2$. 
We first have the following lemma. 
\begin{lem}\label{lem:s=0}
Suppose that 
 $\pi(1)=\pi(2)=\circ$, i.e. $\pi(\Lambda)$
 is 
 $$\stackrel{1}{\circ} \ \stackrel{2}{\circ}
  \ \cdots \ \circ \ \bullet \cdots .$$
 Then $s(\Lambda)=0$. 
\end{lem}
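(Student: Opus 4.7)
The plan is to prove $s(\Lambda) = 0$ by showing that at the position $i = 1$, neither of the two conditions (a) or (b) in Definition~\ref{def:s} is satisfied, so by definition $s_l(v_1, \ldots, v_l) = 0$.

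First, I would unpack the hypothesis. Since $\pi(1) = \pi(2) = \circ$, the first two vectors are
\[
v_1 = (0, v(1)), \quad v_2 = (0, v(2)),
\]
so $\mu(v_1) = \mu(v_2) = \infty$. Condition (a) at $i = 1$ demands the strict inequality $\mu(v_1) > \mu(v_2)$, which clearly fails.

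For condition (b) at $i = 1$, the first inequality $\mu(v_1) \le \mu(v_2)$ holds (with equality). The critical observation is that the second inequality $\mu(v_1) < \mu(v_2 + \cdots + v_l)$ cannot hold: because $r(\Lambda) = 2$ and positions $1, 2$ are both $\circ$, every vertex in $V_\bullet$ lies in positions $\ge 3$, and the total $\bullet$-weight summed over those positions is $2$. Hence
\[
v_2 + \cdots + v_l = \Bigl(2,\; \sum_{\substack{i \ge 2 \\ i \in V_\circ}} v(i)\Bigr),
\]
whose $\mu$-value is a finite nonnegative rational number. Thus $\mu(v_1) < \mu(v_2 + \cdots + v_l)$ reads $\infty < (\text{finite})$, which is false.

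Since neither (a) nor (b) holds at $i = 1$, the defining condition in Definition~\ref{def:s} fails, giving $s(\Lambda) = s_l(v_1, \ldots, v_l) = 0$. There is no real obstacle here; the entire argument is a direct consequence of the constraint that when the sequence $\pi(\Lambda)$ starts with two $\circ$'s, the tail $v_2 + \cdots + v_l$ necessarily absorbs all of the $\bullet$-weight, forcing its $\mu$-value to be finite while $\mu(v_1)$ is infinite.
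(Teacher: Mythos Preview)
Your proof is correct and follows essentially the same approach as the paper's: both arguments check that at $i=1$, condition (a) fails because $\mu(v_1)=\mu(v_2)=\infty$, and condition (b) fails because $\mu(v_1)=\infty$ while $\mu(v_2+\cdots+v_l)$ is finite (since the tail carries all of the $\bullet$-weight $r(\Lambda)=2$). The paper states this in one line; your version simply spells out the details more explicitly.
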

\begin{proof}
Since $\mu(v_1)=\mu(v_2)$ and 
$\infty =\mu(v_1)>\mu(v_2+\cdots +v_l)$, 
$(v_1, \cdots, v_l)$ does not satisfy (a) nor (b) 
in Definition~\ref{def:s}. 
\end{proof}
Next we compute the case of $\lvert V_{\bullet}\rvert =1$. 
\begin{lem}\label{lem:coms}
Suppose that $\lvert V_{\bullet}\rvert =1$
with $s(\Lambda)\neq 0$. Then 
the value $s(\Lambda)$ is computed as follows. 
\begin{itemize}
 \item 
 Suppose that 
 $\pi(1)=\circ$, $\pi(2)=\bullet$ 
 and $\pi(i)=\circ$ for all $i\ge 3$, 
  i.e. $\pi(\Lambda)$
 is 
 \begin{align}\label{pi1}
 \stackrel{1}{\circ} \ \stackrel{2}{\bullet} \ \circ \ 
 \cdots \ \stackrel{l}{\circ}.
 \end{align}
 Then $s(\Lambda)=(-1)^{l}$. 
 \item Suppose that $\pi(1)=\bullet$ and $\pi(i)=\circ$
 for all $i\ge 2$, i.e. $\pi(\Lambda)$ is
 \begin{align}\label{pi2}
 \stackrel{1}{\bullet} \ \stackrel{2}{\circ} \ \circ \ 
 \cdots \ \stackrel{l}{\circ}.
 \end{align}
 Then $s(\Lambda)=(-1)^{l-1}$. 
\end{itemize}
\end{lem}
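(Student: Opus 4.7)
The plan is to exploit the drastic collapse of the $\mu$-function in this setting: $\mu(v_i)=0$ when $\pi(i)=\bullet$ and $\mu(v_i)=\infty$ when $\pi(i)=\circ$, so the inequalities appearing in conditions (a) and (b) of Definition~\ref{def:s} are controlled entirely by whether the various partial sums contain the unique $\bullet$-vertex. Let $j\in\{1,\ldots,l\}$ denote the position of this vertex, so $v_j=(2,0)$ and $v_i=(0,n_i)$ with $n_i\geq 1$ for $i\neq j$.

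First I would carry out a pair-by-pair case analysis on $i=1,\ldots,l-1$ according to the position of $i$ relative to $j$. The key observation is that when $i<j-1$, the pair $(v_i,v_{i+1})$ consists of two $\circ$-vertices while the unique $\bullet$ sits strictly to the right; then $\mu(v_i)=\mu(v_{i+1})=\infty$ kills (a) for lack of strictness, and $\mu(v_1+\cdots+v_i)=\infty$ together with $\mu(v_{i+1}+\cdots+v_l)$ finite kills (b). Hence if $j\geq 3$, the pair at $i=j-2$ already forces $s(\Lambda)=0$, so only the positions $j=1$ and $j=2$ are compatible with $s(\Lambda)\neq 0$.

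Next I would record the surviving pairs in the remaining ranges. For $i=j-1$ the pair reads $\circ\bullet$, and (a) holds with the partial-sum comparison $\infty\geq\text{finite}$, contributing factor $+1$. For $i=j$ (only relevant when $j\leq l-1$) the pair is $\bullet\circ$, and (b) holds because $\mu(v_1+\cdots+v_j)$ is finite while $\mu(v_{j+1}+\cdots+v_l)=\infty$, contributing $-1$. For $i>j$, both vertices are $\circ$ but now the $\bullet$ lies in the left block, so $\mu(v_1+\cdots+v_i)$ is finite while $\mu(v_{i+1}+\cdots+v_l)=\infty$; this places us in (b) and again contributes $-1$.

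Assembling these contributions finishes the computation. For $j=1$ all $l-1$ pairs fall into one of the (b) cases, giving $s(\Lambda)=(-1)^{l-1}$; for $j=2$ the single (a) pair at $i=1$ is followed by $l-2$ (b) pairs, giving $s(\Lambda)=(-1)^{l-2}=(-1)^l$. The argument is purely combinatorial; the only place requiring care is the endpoint convention when $j=l$ (so no $i=j$ pair is present) or when $l=2$, which I would verify separately and check to match the stated formulas. No genuine obstacle is expected — the main task is bookkeeping the four position regimes of $i$ correctly.
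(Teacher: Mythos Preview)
Your argument is correct and follows essentially the same route as the paper's proof: first rule out the $\bullet$ sitting at position $j\geq 3$ (the paper does this by citing Lemma~\ref{lem:s=0}, which is exactly your observation for the range $i<j-1$), then verify that every remaining index $i$ satisfies one of (a) or (b) and count the (b)'s. Your write-up is in fact more explicit than the paper's, which simply asserts the count of (b)'s without spelling out the case-by-case check you give.
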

\begin{proof}
By Lemma~\ref{lem:s=0}, the sequence 
$\{ \pi(1), \pi(2), \cdots, \pi(l)\}$ is either 
(\ref{pi1}) or (\ref{pi2}). In case (\ref{pi1}), 
(resp.~(\ref{pi2}),) the condition (a) or (b) 
in Definition~\ref{def:s} is satisfied and 
the number of $1\le i \le l-1$ in which 
(b) holds is $l-2$. (resp.~$l-1$.)
\end{proof}
The case of $\lvert V_{\bullet}\rvert =2$ is 
computed as follows. 
\begin{lem}\label{lem:computed}
Suppose that $\lvert V_{\bullet}\rvert=2$
with $s(\Lambda)\neq 0$. Then
$l\ge 3$ and 
 $s(\Lambda)$ is computed as follows. 
\begin{itemize}
\item Suppose that $V_{\bullet}=\{1, 2\}$, i.e. 
$\pi(\Lambda)$ is 
\begin{align}\label{case1}
 \stackrel{1}{\bullet} \ \stackrel{2}{\bullet} \ \circ \ 
 \cdots \ \stackrel{l}{\circ}.
 \end{align}
 Then $s(\Lambda)=(-1)^{l-1}$. 
 \item Suppose that $V_{\bullet}=\{1, a\}$
 for $a\ge 3$, i.e. $\pi(\Lambda)$ is 
 \begin{align}\label{case2}
 \stackrel{1}{\bullet} \ \stackrel{2}{\circ} \ \cdots \ 
 \stackrel{a-1}{\circ} \ \stackrel{a}{\bullet} \ \stackrel{a+1}{\circ}
 \ \cdots 
 \cdots \ \stackrel{l}{\circ}.
 \end{align}
Then we have 
\begin{align*}
&v(2)+ v(3)+\cdots + v(a-2)<v(a-1)+v(a+1)+\cdots +v(l), \\
&v(2)+ v(3)+\cdots + v(a-2)+v(a-1)\ge v(a+1)+\cdots +v(l), 
\end{align*}
and $s(\Lambda)=(-1)^{l}$. 
\item Suppose that $V_{\bullet}=\{2, 3\}$, i.e. 
$\pi(\Lambda)$ is 
\begin{align}\label{case3}
 \stackrel{1}{\circ} \ \stackrel{2}{\bullet} \ \stackrel{3}{\bullet} \ 
 \stackrel{4}{\circ} \ 
 \cdots \ \stackrel{l}{\circ}.
 \end{align}
 Then we have $v(1)<v(4)+\cdots +v(l)$ and $s(\Lambda)=(-1)^{l}$. 
 \item Suppose that $V_{\bullet}=\{2, a\}$
 for $a\ge 4$, i.e. $\pi(\Lambda)$ is 
 \begin{align}\label{case4}
 \stackrel{1}{\circ} \ \stackrel{2}{\bullet} \ \stackrel{3}{\circ} 
 \ \cdots \ 
 \stackrel{a-1}{\circ} \ \stackrel{a}{\bullet} \ \stackrel{a+1}{\circ}
 \ \cdots 
 \cdots \ \stackrel{l}{\circ}.
 \end{align}
Then we have
\begin{align}\label{v1}
&v(1)+ v(3)+\cdots + v(a-2)<v(a-1)+v(a+1)+\cdots +v(l), \\
\label{v2}
&v(1)+ v(3)+\cdots + v(a-2)+v(a-1)\ge v(a+1)+\cdots +v(l), 
\end{align}
and $s(\Lambda)=(-1)^{l-1}$. 
\end{itemize}
\end{lem}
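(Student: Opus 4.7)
The plan is a direct, position-by-position verification in each of the four sub-cases. The key observation is that when $r(\Lambda)=2$ with $|V_\bullet|=2$, both bullet vertices carry weight $v(i)=1$, and $\mu(v_i)$ takes only the values $0$ (when $i\in V_\bullet$) and $\infty$ (when $i\in V_\circ$). Writing $(r_i,n_i)=v_1+\cdots+v_i$, the quantity $r_i$ counts the bullets in positions $\le i$ and $n_i=\sum_{j\in V_\circ,\ j\le i}v(j)$, so that
\[
\mu(v_1+\cdots+v_i)=\frac{n_i}{r_i},\qquad \mu(v_{i+1}+\cdots+v_l)=\frac{n-n_i}{2-r_i}.
\]
This reduces every check in Definition~\ref{def:s} to an elementary comparison of sums of weights.

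First I would observe that $l\ge 3$ is forced since $n\ge 1$ requires at least one circle beyond the two bullets, and that Lemma~\ref{lem:s=0} already rules out all configurations with $\pi(1)=\pi(2)=\circ$; what remains are precisely the four configurations (\ref{case1})--(\ref{case4}). For each of these I would run through $i=1,\ldots,l-1$ and classify the transition $(\pi(i),\pi(i+1))$ into four types: a $\circ\circ$ transition can only satisfy (b), and requires $n_i<n-n_i$ when $r_i<2$ (and fails otherwise); a $\bullet\circ$ transition can only satisfy (b), automatically whenever the right-hand partial sum still contains a circle; a $\circ\bullet$ transition can only satisfy (a), and gives an inequality of the form $n_i/r_i\ge (n-n_i)/(2-r_i)$; a $\bullet\bullet$ transition (occurring only in (\ref{case3})) can only satisfy (b), producing the condition $v(1)<v(4)+\cdots+v(l)$.

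The accumulated necessary conditions for $s(\Lambda)\neq 0$ come from the two tightest positions: the last $\circ\circ$ transition just before the second bullet (producing the strict inequality (\ref{v1})) and the $\circ\bullet$ transition at $i=a-1$ (producing the non-strict reverse inequality (\ref{v2})). All other positions are then automatically satisfied because the partial sums $n_i$ are monotone in $i$, so the bound at $i=a-2$ dominates the earlier $\circ\circ$ checks; in configuration~(\ref{case3}) the two inequalities collapse into the single condition above. Finally I would count the (b)-occurrences to determine the sign: $l-1$ in (\ref{case1}), $l-2$ in (\ref{case2}), $l-2$ in (\ref{case3}), and $l-3$ in (\ref{case4}), which yield the signs $(-1)^{l-1}$, $(-1)^{l}$, $(-1)^{l}$, $(-1)^{l-1}$ respectively.

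The main obstacle is the bookkeeping in configurations (\ref{case2}) and (\ref{case4}), where the length of the $\circ\circ$ range depends on $a$: the endpoint values $a=3$ (in Case 2) and $a=4$ (in Case 4), where the range $\{i:2\le i\le a-2\}$ or $\{i:3\le i\le a-2\}$ degenerates, must be handled as special cases so that the vacuous inequalities and the counts of (b)-occurrences both remain consistent with the general formula.
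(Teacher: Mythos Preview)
Your approach is essentially the paper's: reduce to the four configurations via Lemma~\ref{lem:s=0} and then run through $i=1,\dots,l-1$ checking Definition~\ref{def:s}, exploiting that $\mu(v_i)\in\{0,\infty\}$ and that the partial sums $n_i$ are monotone so that the binding constraints sit at $i=a-2$ and $i=a-1$. Your counts of (b)-occurrences ($l-1$, $l-2$, $l-2$, $l-3$) and the resulting signs are all correct.

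A few small misstatements in your transition taxonomy should be cleaned up, though none affects the argument. A $\circ\circ$ transition with $r_i=2$ does \emph{not} fail: the right-hand slope is $(n-n_i)/0=\infty$, so (b) holds automatically there---this is exactly why all positions after the second bullet are harmless. A $\bullet\bullet$ transition also occurs in configuration~(\ref{case1}) at $i=1$, not only in~(\ref{case3}); there it gives the trivially satisfied $0<n$. And the $\bullet\circ$ transition at $i=2$ in configuration~(\ref{case4}) is not automatic (it imposes $v(1)<n-v(1)$), but as you say it is dominated by the check at $i=a-2$ via monotonicity. Finally, for $l\ge 3$ you should argue directly that $l=2$ forces $\mu(v_1)=\mu(v_2)=0$ so neither (a) nor (b) can hold, rather than assuming $n\ge 1$.
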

\begin{proof}
By Lemma~\ref{lem:s=0}, the sequence $\pi(\Lambda)$
is one of (\ref{case1}), (\ref{case2}), (\ref{case3}), (\ref{case4}). 
In each case, $s(\Lambda)$ is easily computed by Definition~\ref{def:s}. 
For instance, let us consider the case (\ref{case4}). 
Since $\mu(v_{a-2}) \le \mu(v_{a-1})$ and $\mu(v_{a-1})>\mu(v_{a})$, 
we have 
\begin{align}
\label{mu1}&\mu(v_1 +v_2+ \cdots + v_{a-2})<\mu(v_{a-1}+\cdots +v_{l}), \\
\label{mu2}&
\mu(v_1 +v_2+ \cdots + v_{a-2}+v_{a-1})\ge \mu(v_{a}+\cdots +v_{l}).
\end{align}
Since $v_2=v_{a}=(1, 0)$, the conditions (\ref{mu1}), (\ref{mu2})
are equivalent to (\ref{v1}), (\ref{v2}) respectively. 
Conversely if conditions (\ref{v1}), (\ref{v2})
are satisfied it is easy to check that one of (a) or (b) in 
Definition~\ref{def:s}
holds at each $1\le i\le l-1$. 
In this case, the number of $1\le i\le i-1$ in which 
$\mu(v_i) \le \mu(v_{i+1})$ holds is $l-3$, 
hence $s(\Lambda)=(-1)^{l-1}$. 
\end{proof}
\subsection{Computation of $u(\Lambda)$}
In this subsection, we compute 
$u(\Lambda)$ for a data (\ref{graph})
satisfying (\ref{rn}). 
We fix an identification between 
$V$ and $\{1, 2, \cdots, l\}$ via $\le$.
Let us take $1\le l'\le l$ and a map
\begin{align}\label{map:psi}
\psi \colon
\{1, 2, \cdots, l\} \twoheadrightarrow \{1, 2, \cdots, l'\}, 
\end{align}
which appears in (\ref{sum:u}). 
Note that $\pi(i)=\pi(j)$ if $\psi(i)=\psi(j)$, 
hence the map $\pi$
descends to the map 
\begin{align}\label{descend}
\pi' \colon 
\{1, \cdots, l'\} \to \{\bullet, \circ\},
\end{align}
via $\psi$. 
We set $v'\colon \{1, \cdots, l'\}
\to \mathbb{Z}_{\ge 1}$ to be 
\begin{align*}
v'(i)=\sum_{j\in \psi^{-1}(i)}v(j).
\end{align*}
Then the data 
\begin{align*}
\Lambda'=(\{1, \cdots, l'\}, \pi', v', \le),
\end{align*}
is a bi-colored weighted ordered vertex. 
The map $\psi$ descends to the map of the sequences 
$\pi(\psi) \colon \pi(\Lambda) \to \pi'(\Lambda')$.
First we compute the case of $\lvert V_{\bullet} \rvert =1$. 
\begin{lem}\label{lem:calu}
Suppose that $V_{\bullet}=\{a\}$ for $1\le a \le l$. 
Then we have 
\begin{align}\label{formu}
u(\Lambda)=\frac{(-1)^{l-a}}{(a-1)!(l-a)!}.
\end{align}
\end{lem}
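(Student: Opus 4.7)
The plan is to simplify the triple sum in (\ref{sum:u}) drastically by exploiting that $V_\bullet = \{a\}$ is a single vertex. First I observe that for the vectors $v_i\in C(\Gamma)$ associated to $\Lambda$ we have $\mu(v_i) = 0$ when $i\in V_\bullet$ and $\mu(v_i) = \infty$ when $i\in V_\circ$. The constraint $\mu(v_i) = \mu(v_j)$ whenever $\psi(i) = \psi(j)$ therefore forces the fiber $\psi^{-1}(\psi(a))$ to consist only of $V_\bullet$-vertices, so it must equal $\{a\}$. Similarly, the block of $\xi\circ\psi$ containing $a$ has $\mu \in (0,\infty)$, while every block made up purely of $V_\circ$-vertices has $\mu = \infty$. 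Consequently $\xi\circ\psi$ must have just one block, i.e.\ $l''=1$ and $\xi$ is the constant map. In particular $(-1)^{l''+1}/l'' = 1$.

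With these reductions, the sum collapses to
\begin{equation*}
u(\Lambda) = \sum_{l'\ge 1}\sum_{\psi}s(\Lambda')\prod_{b=1}^{l'}\frac{1}{\lvert\psi^{-1}(b)\rvert!},
\end{equation*}
where $\Lambda' = (\{1,\dots,l'\},\pi',v',\le)$ is the bi-colored weighted ordered vertex obtained by collapsing the blocks of $\psi$. Since $\Lambda'$ again has exactly one bullet (at position $\psi(a)$), Lemma~\ref{lem:coms} implies $s(\Lambda')=0$ unless the bullet is at position $1$ or $2$ in $\Lambda'$. If $a=1$, only $\psi(a)=1$ is possible, $s(\Lambda')=(-1)^{l'-1}$, and $\psi$ partitions $\{2,\dots,l\}$ into $k\cneq l'-1$ consecutive blocks of sizes $n_1,\dots,n_k\ge 1$. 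If $a\ge 2$, only $\psi(a)=2$ is possible, which forces $\psi^{-1}(1)=\{1,\dots,a-1\}$ (contributing the factor $1/(a-1)!$), while $\psi$ partitions $\{a+1,\dots,l\}$ into $k=l'-2$ consecutive blocks; here $s(\Lambda')=(-1)^{l'}$, so the overall sign is again $(-1)^k$.

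In both cases the residual sum is
\begin{equation*}
\sum_{k\ge 0}(-1)^{k}\sum_{\substack{n_1+\cdots+n_k=m\\ n_j\ge 1}}\prod_{j=1}^{k}\frac{1}{n_j!} = \sum_{k\ge 0}(-1)^{k}[x^m](e^{x}-1)^{k} = [x^m]\,e^{-x} = \frac{(-1)^m}{m!},
\end{equation*}
with $m=l-1$ when $a=1$ and $m=l-a$ when $a\ge 2$ (using that $(e^{x}-1)^{k}$ has lowest degree $k$, so only finitely many terms contribute to each coefficient). Multiplying by the factor $1/(a-1)!$ obtained in the case $a\ge 2$ (and noting $1/(a-1)!=1$ when $a=1$) yields (\ref{formu}) uniformly.

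The only non-routine step is the second one: recognising that the triple sum degenerates to $l''=1$ because of the mismatch between the $\mu$-values of bullet and circle blocks. Once this is in hand, everything else is a short composition-series identity for $e^{-x}$.
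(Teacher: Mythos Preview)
Your proof is correct and follows essentially the same route as the paper's: reduce to $l''=1$, use Lemma~\ref{lem:s=0} and Lemma~\ref{lem:coms} to pin down the shape of $\psi$ (bullet at position $1$ or $2$ in $\Lambda'$), factor out $1/(a-1)!$, and evaluate the remaining composition sum. The only difference is that where the paper invokes Lemma~\ref{lem:elem} for the identity $\sum_{k}(-1)^{k}\sum_{n_1+\cdots+n_k=m}\prod_j 1/n_j! = (-1)^m/m!$, you supply a direct generating-function proof via $[x^m]\sum_k(-(e^x-1))^k = [x^m]e^{-x}$; this is the same identity, so the approaches coincide.
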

\begin{proof}
In this case, we have 
$v(a)=2$ and the number $l''$ which appears in (\ref{sum:u})
must be $1$. 
For a map (\ref{map:psi}), 
the map $\pi(\psi) \colon 
\pi(\Lambda) \to \pi'(\Lambda')$ is either one of the following forms
by Lemma~\ref{lem:s=0}, 
\begin{align*}&a=1 \qquad
\begin{array}{cccc}
\stackrel{1}{\bullet} & \circ \cdots \circ & \cdots & \circ \cdots \circ \\
\downarrow & \downarrow & & \downarrow \\
\bullet & \circ & \cdots & \circ 
\end{array} \\
&a\ge 2 \qquad 
\begin{array}{ccccc}
\circ \cdots \circ & \stackrel{a}{\bullet} &\circ \cdots \circ
 & \cdots & \circ \cdots \circ \\
\downarrow & \downarrow &\downarrow & & \downarrow \\
\circ & \bullet & \circ & \cdots & \circ 
\end{array} 
\end{align*}
For simplicity we calculate the case of 
$a\ge 2$. The case of $a=1$ is similar. 
By the definition of $u_l$ in (\ref{sum:u})
and using Lemma~\ref{lem:coms}, we have 
$$u(\Lambda)=\frac{1}{(a-1)!}
\sum_{\begin{subarray}{c}
\psi \colon \{a+1, \cdots, l\} \to \{3, \cdots, l'\}, \\
\psi \emph{ \rm{is a non-decreasing}} \\
\emph{\rm{surjective map}}.
\end{subarray}}(-1)^{l'}
\prod_{i=1}^{l'}\frac{1}{\lvert \psi^{-1}(i) \rvert !}.$$
Then we apply Lemma~\ref{lem:elem} below 
and conclude (\ref{formu}). 
\end{proof}
We have used the following lemma, whose proof 
is written in~\cite[Proposition~4.9]{Joy4}.
\begin{lem}\label{lem:elem}
For any $l \ge 1$, we have 
$$\sum_{\begin{subarray}{c}l'\ge 0, \
\psi \colon \{1, \cdots, l\} \to \{1, \cdots, l'\}, \\
\psi \emph{ is a non-decreasing surjective map.}
\end{subarray}}(-1)^{l-l'}
\prod_{i=1}^{l'}\frac{1}{\lvert \psi^{-1}(i) \rvert !}
=\frac{1}{l!}.$$
\end{lem}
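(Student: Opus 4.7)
The plan is to convert the identity into a standard computation with exponential generating functions. First I would reinterpret a non-decreasing surjection $\psi \colon \{1, \ldots, l\} \twoheadrightarrow \{1, \ldots, l'\}$ as a composition of $l$ into $l'$ positive parts, via $n_i = \lvert \psi^{-1}(i) \rvert$. The LHS then rewrites as
$$a_l \cneq \sum_{l' \geq 1}(-1)^{l-l'} \sum_{\substack{n_1, \ldots, n_{l'} \geq 1 \\ n_1 + \cdots + n_{l'} = l}} \prod_{i=1}^{l'} \frac{1}{n_i!},$$
and the goal becomes $a_l = 1/l!$ for $l \geq 1$. One may also set $a_0 = 1$ via the empty composition, which is consistent with $1/0! = 1$ and convenient for what follows.

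Next I would introduce the bivariate generating function
$$F(x, z) = \sum_{l, l' \geq 0} z^{l'} \sum_{\substack{n_1, \ldots, n_{l'} \geq 1 \\ n_1 + \cdots + n_{l'} = l}} \Big( \prod_{i=1}^{l'} \frac{1}{n_i!} \Big) x^l = \sum_{l' \geq 0} z^{l'} (e^x - 1)^{l'} = \frac{1}{1 - z(e^x - 1)},$$
where the first equality uses the EGF $\sum_{n \geq 1} x^n/n! = e^x - 1$ and the second is a geometric series. Specializing $z = -1$ gives $F(x, -1) = 1/(1 + (e^x-1)) = e^{-x}$, which encodes the identity $\sum_l \big( \sum_{l'} (-1)^{l'} b_{l, l'}\big) x^l = e^{-x}$, where $b_{l,l'}$ denotes the inner composition sum. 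Replacing $x$ by $-x$ then converts the sign $(-1)^{l'}$ into $(-1)^{l-l'}$ on the left (since the $x^l$ picks up $(-1)^l$), while on the right $e^{-x}$ becomes $e^{x} = \sum_{l \geq 0} x^l/l!$. Extracting coefficients yields $a_l = 1/l!$.

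The manipulations are entirely routine; the only point requiring care is sign bookkeeping between $(-1)^{l-l'}$ and $(-1)^{l'}$ when passing from $F(x, -1)$ to $A(x) \cneq \sum_l a_l x^l$. I would record the substitution explicitly to avoid a parity error. Should the generating function approach be felt to be too indirect in context, a fallback is induction on $l$: isolating the first part $n_1$ of the composition produces a convolutive recursion $a_l = \sum_{n=1}^{l} (-1)^{n-1} a_{l-n} / n!$ with $a_0 = 1$, and this is precisely the Cauchy-product form of $e^x \cdot e^{-x} = 1$, which forces $a_l = 1/l!$. Either way, no genuine obstacle appears, and the lemma reduces to a one-line EGF computation.
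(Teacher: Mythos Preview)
Your argument is correct. The reinterpretation of a non-decreasing surjection $\psi$ as a composition $(n_1,\ldots,n_{l'})$ of $l$ is accurate, and the generating function manipulation
\[
\sum_{l'\ge 0} z^{l'}(e^x-1)^{l'}=\frac{1}{1-z(e^x-1)}\Big|_{z=-1}=e^{-x}
\]
followed by multiplying the coefficient of $x^l$ by $(-1)^l$ yields exactly $a_l=1/l!$. Your fallback via the recursion $a_l=\sum_{n=1}^{l}(-1)^{n-1}a_{l-n}/n!$ and the Cauchy product $e^x\cdot e^{-x}=1$ is equally valid.

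As for comparison with the paper: the paper does not actually supply a proof here. It merely states that the proof is written in \cite[Proposition~4.9]{Joy4} and moves on. So your proposal is not competing with an alternative argument but rather filling in a citation with a short, self-contained computation. The generating-function route you take is the standard one for this type of ordered-set-partition identity and is precisely the kind of argument one would expect to find behind the reference.
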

Next we compute $u(\Lambda)$ when 
$\lvert V_{\bullet} \rvert =2$. 
We write $V_{\bullet}=\{a, b\}$ for 
$1\le a<b \le l$. 
Note that we have
\begin{align*}
v(a)=v(b)=1, \quad l''\le 2.
\end{align*}
Here $l''$ is a number which appears
in (\ref{sum:u}). 
When $b-a \ge 3$, the coefficient $u(\Lambda)$
does not contribute to the sum (\ref{DTrn})
by the following lemma. 
\begin{lem}\label{lem:nocont}
Suppose that $V_{\bullet}=\{a, b\}$
with $b-a \ge 3$. Then we have 
\begin{align}\label{sum=0}
\sum_{(E, s, t)\in \eE(\Lambda)}(-1)^{\lvert E_{\bullet \to \circ} \rvert }
\prod_{e\in E}v(s(e))v(t(e))=0.
\end{align}
\end{lem}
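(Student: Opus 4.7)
The plan is to unpack the combinatorial structure of $\eE(\Lambda)$ when $V_{\bullet} = \{a, b\}$ with $a < b$ and then sum over the remaining data in a way that exhibits an obvious cancellation.

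First I would recall that an element $(E, s, t) \in \eE(\Lambda)$ is a tree (connected, simply connected quiver) on the vertex set $V$ whose edges all run between $V_{\bullet}$ and $V_{\circ}$. Since $|V_{\bullet}| = 2$, a simple degree count (the tree has $l-1$ edges and each $\circ$-vertex has degree at most $2$) shows that exactly one vertex $c^{\ast} \in V_{\circ}$ is connected to both $a$ and $b$, while every other $c \in V_{\circ} \setminus \{c^{\ast}\}$ is a leaf attached to exactly one of $a$, $b$. Therefore the tree is determined by the pair $(c^{\ast}, \phi)$ where $\phi \colon V_{\circ} \setminus \{c^{\ast}\} \to \{a, b\}$ records the attachment of each leaf.

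Next I would compute the contribution of a fixed tree. Because $v(a) = v(b) = 1$, each edge contributes $v(c)$ where $c$ is its $\circ$-endpoint, so the product $\prod_{e \in E} v(s(e)) v(t(e))$ equals $v(c^{\ast})^2 \prod_{c \in V_{\circ} \setminus \{c^{\ast}\}} v(c)$. For the sign, set
\begin{align*}
\epsilon_a(c) = \begin{cases} +1 & c < a \\ -1 & c > a \end{cases}, \qquad
\epsilon_b(c) = \begin{cases} +1 & c < b \\ -1 & c > b \end{cases},
\end{align*}
so that an edge between $a$ (resp.\ $b$) and $c \in V_{\circ}$ contributes $\epsilon_a(c)$ (resp.\ $\epsilon_b(c)$) to the sign $(-1)^{|E_{\bullet \to \circ}|}$, according to whether the $\bullet$-endpoint is smaller than the $\circ$-endpoint in the total order. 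Summing over the leaf-attachment map $\phi$ factors one copy of $\epsilon_a(c) + \epsilon_b(c)$ for each leaf $c$, giving
\begin{align*}
\sum_{(E,s,t) \in \eE(\Lambda)}(-1)^{|E_{\bullet \to \circ}|} \prod_{e \in E} v(s(e)) v(t(e))
= \sum_{c^{\ast} \in V_{\circ}} v(c^{\ast})^{2} \epsilon_a(c^{\ast}) \epsilon_b(c^{\ast})
\prod_{c \in V_{\circ} \setminus \{c^{\ast}\}} v(c)(\epsilon_a(c) + \epsilon_b(c)).
\end{align*}

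The key observation, which finishes the argument, is that $\epsilon_a(c) + \epsilon_b(c) = 0$ precisely when $a < c < b$; outside this range it equals $\pm 2$. Since $V_{\bullet} = \{a,b\}$, the set of $c \in V_{\circ}$ with $a < c < b$ is $\{a+1, \ldots, b-1\}$, which has cardinality $b - a - 1 \ge 2$ by hypothesis. Hence for any choice of $c^{\ast}$, at least one such strictly-between vertex remains among the leaves and contributes a vanishing factor to the product. Every term in the sum is therefore zero, which is exactly (\ref{sum=0}). There is no real obstacle here beyond carefully bookkeeping the sign conventions; the identity $|V_{\bullet}|=2$ together with $b - a \ge 3$ forces more ``middle'' vertices than the single allowed bridge $c^{\ast}$ can absorb.
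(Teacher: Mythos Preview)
Your proof is correct. Both your argument and the paper's rest on the same observation---that a $\circ$-vertex $c$ with $a<c<b$ contributes with opposite signs when attached to $a$ versus $b$, and that $b-a\ge 3$ guarantees at least one such vertex is always a leaf---but the packaging differs. The paper constructs a sign-reversing involution on $\eE(\Lambda)$: given a tree, it locates the bridge vertex, then picks a middle leaf $j$ with $a<j<b$ distinct from the bridge and swaps its attachment between $a$ and $b$; the two paired trees have equal products (since $v(a)=v(b)=1$) and opposite signs. You instead sum over all leaf-attachment maps $\phi$ first, which factors the whole sum as $\sum_{c^\ast}(\cdots)\prod_{c\neq c^\ast}v(c)(\epsilon_a(c)+\epsilon_b(c))$, and then observe that at least one factor vanishes. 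Your factorization is arguably cleaner: it avoids having to specify a canonical choice of $j$ to make the involution well-defined, and it makes transparent that the bridge $c^\ast$ can lie anywhere in $V_\circ$, not necessarily between $a$ and $b$ (a point the paper's phrasing ``there is unique $a<i<b$'' glosses over).
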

\begin{proof}
Take $(E, s, t) \in \eE(\Lambda)$. 
Since the quiver $(V, E, s, t)$ is connected and simply 
connected, there is unique $a<i<b$
and $e, e'\in E$ such that 
\begin{align*}
s(e)=a, \ t(e)=s(e')=i, \ t(e')=b.
\end{align*}
Since $b-a\ge 3$, there is $a<j<b$ such 
that $j\neq i$. Since $(V, E, s, t)$
is connected, 
there is $e'' \in E$ such that 
either $(s(e''), t(e''))=(a, j)$ or 
$(s(e''), t(e''))=(j, b)$
holds. 
Suppose that $(s(e''), t(e''))=(a, j)$ holds, i.e. 
the geometric realization of the quiver 
$(V, E, s, t)$ is as follows,  
\begin{align*}
\begin{xy} 
 {\ar @/^4mm/ (22,0)*{\circ}*+!D{};(37,0) *{}
 \ar @{.}(27,0)*{\cdots}*+!D{};(27,0) *{}
 \ar  (32,0)*{\circ}*+!D{};(37,0) *{\circ}*+!D{}
 \ar @/^4mm/^{e} (37,0)*{\bullet}*+!D{_{a}};(60,0) *{\circ}*+!D{_{i}}
 \ar @/_2mm/_{e''} (37,0)*{\bullet}*+!D{};(50,0) *{\circ}*+!D{_{j}}
  \ar  (37,0)*{}*+!D{};(42,0) *{\circ}*+!D{}
  \ar @{.}(46,0)*{\cdots }*+!D{};(46,0) *{}
 \ar @{.}(55,0)*{\cdots }*+!D{};(55,0) *{}
 \ar @{.}(70,0)*{\cdots }*+!D{};(70,0) *{}
 \ar (78,0)*{\circ}*+!D{};(83,0) *{\bullet}*+!D{}
 \ar (83,0)*{}*+!D{};(88,0) *{\circ}*+!D{}
 \ar @/^4mm/^{e'} (60,0)*{}*+!D{};(83,0) *{\bullet}*+!D{_{b}}
 \ar @{.}(93,0)*{\cdots}*+!D{};(93,0) *{}
 \ar @/^4mm/ (83,0)*{}*+!D{};(98,0)*{\circ}
  }
  \end{xy}
\end{align*}
Note that by the simply connectedness of 
$(V, E, s, t)$, there is no $e'''\in E$ which satisfies 
$(s(e'''), t(e'''))=(j, b)$. We set $E'$ to be the set 
$$E'=(E\setminus\{e''\}) \coprod \{e'''\}, $$
and define maps $s', t' \colon E'\to V$ 
so that $s'|_{E\setminus \{e''\}}=s|_{E\setminus \{e''\}}$, 
$t'|_{E\setminus \{e''\}}=t|_{E\setminus \{e''\}}$, 
and $(s(e'''), t(e'''))=(j, b)$. 
The geometric realization of the 
quiver $(V, E', s', t')$ is as follows,  
\begin{align*}
\begin{xy} 
 {\ar @/^4mm/ (22,0)*{\circ}*+!D{};(37,0) *{}
 \ar @{.}(27,0)*{\cdots}*+!D{};(27,0) *{}
 \ar  (32,0)*{\circ}*+!D{};(37,0) *{\circ}*+!D{}
 \ar @/^4mm/^{e} (37,0)*{\bullet}*+!D{_{a}};(60,0) *{\circ}*+!D{_{i}}
  \ar  (37,0)*{}*+!D{};(42,0) *{\circ}*+!D{}
  \ar @{.}(46,0)*{\cdots }*+!D{};(46,0) *{}
 \ar @{.}(55,0)*{\cdots }*+!D{};(55,0) *{}
 \ar @{.}(70,0)*{\cdots }*+!D{};(70,0) *{}
 \ar (78,0)*{\circ}*+!D{};(83,0) *{\bullet}*+!D{}
 \ar (83,0)*{}*+!D{};(88,0) *{\circ}*+!D{}
 \ar @/^4mm/^{e'} (60,0)*{}*+!D{};(83,0) *{\bullet}*+!D{_{b}}
 \ar @{.}(93,0)*{\cdots}*+!D{};(93,0) *{}
 \ar @/^4mm/ (83,0)*{}*+!D{};(98,0)*{\circ}
 \ar @/_4mm/ _{e'''} (50,0)*{\circ}*+!D{_{j}};(83,0) *{\bullet}
  }
  \end{xy}
\end{align*}
Since $v(a)=v(b)=1$, we have 
\begin{align*}
(-1)^{\lvert E_{\bullet \to \circ} \rvert }
\prod_{e\in E}v(s(e))v(t(e)) +
(-1)^{\lvert E'_{\bullet \to \circ} \rvert }
\prod_{e\in E'}v(s'(e))v(t'(e))
=0.
\end{align*}
Therefore the sum (\ref{sum=0}) vanishes. 
\end{proof}
We compute $u(\Lambda)$ when 
$b-a \le 2$. 
Let us divide $u(\Lambda)$ into the following sum, 
\begin{align*}
u(\Lambda)=u^{(1)}(\Lambda)+u^{(2)}(\Lambda)+u^{(3)}(\Lambda).
\end{align*}
Each $u^{(i)}(\Lambda)$ is the following. 
\begin{itemize}
\item $u^{(1)}(\Lambda)$ is defined by the sum (\ref{sum:u})
with $l''=1$ and
$\psi \colon \{1, \cdots, l\} \to \{1, \cdots, l'\}$ 
satisfying $\lvert \pi^{'-1}(\bullet) \rvert =2$. 
Here $\pi'\colon \{1, \cdots l'\} \to \{\bullet, \circ\}$
is given by (\ref{descend}). 
\item $u^{(2)}(\Lambda)$ is defined by the sum (\ref{sum:u})
with $l''=1$ and
$\psi \colon \{1, \cdots, l\} \to \{1, \cdots, l'\}$ 
satisfying $\lvert \pi^{'-1}(\bullet) \rvert =1$. 
\item $u^{(3)}(\Lambda)$ is defined by the sum (\ref{sum:u})
with $l''=2$. 
\end{itemize}
We compute $u^{(1)}(\Lambda)$ as follows. 
\begin{lem}\label{lem:u(1)}
(i) Suppose that $V_{\bullet}=\{a, a+1\}$
for $1\le a \le l-1$. Then $u^{(1)}(\Lambda)$is 
non-zero if and only if 
\begin{align*}
v(1)+\cdots +v(a-1)<v(a+2)+\cdots +v(l). 
\end{align*}
In this case, we have 
\begin{align*}
u^{(1)}(\Lambda)=\frac{(-1)^{l-a}}{(a-1)!(l-a-1)!}.
\end{align*}
(ii) Suppose that $V_{\bullet}=\{a, a+2\}$
for $1\le a \le l-2$. 
 Then $u^{(1)}(\Lambda)$ is non-zero if and only if 
 \begin{align}\label{v<}
 & v(1)+\cdots +v(a-1)<v(a+1)+v(a+3)+\cdots +v(l), \\ \label{vge}
 & v(1)+\cdots +v(a-1)+v(a+1)\ge v(a+3)+\cdots +v(l).
 \end{align}
 In this case, we have 
\begin{align}\label{u(1)}
u^{(1)}(\Lambda)=\frac{(-1)^{l-a-1}}{(a-1)!(l-a-2)!}.
\end{align}
\end{lem}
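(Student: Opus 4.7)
The plan is to unpack the defining sum \eqref{sum:u} for $u^{(1)}(\Lambda)$ and evaluate it by combining Lemma~\ref{lem:computed} (which computes $s(\Lambda')$ for the collapsed data $\Lambda'$) with the elementary combinatorial identity Lemma~\ref{lem:elem}. Recall that $u^{(1)}(\Lambda)$ collects the terms with $l''=1$ for which the induced map $\pi'$ on $\Lambda'$ still has two bullet-vertices. Since $l''=1$ the map $\xi$ is trivial, so the only remaining datum is a non-decreasing surjection $\psi\colon\{1,\dots,l\}\twoheadrightarrow\{1,\dots,l'\}$ whose fibers are monochromatic under $\pi$ and which does not identify the two bullet positions.

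First I would classify the admissible $\psi$. Write $V_\bullet=\{a,b\}$ with $b=a+1$ in case (i) and $b=a+2$ in case (ii); set $b_1\cneq \psi(a)$. In case (i), the two bullets are adjacent, so $\psi(a+1)=b_1+1$ automatically. In case (ii) the unique circle at position $a+1$ is flanked by bullets, hence its $\psi$-block can only contain itself, which forces $\psi(a+1)=b_1+1$ and $\psi(a+2)=b_1+2$. In both cases the blocks $\psi^{-1}(1),\dots,\psi^{-1}(b_1-1)$ partition the circle-prefix $\{1,\dots,a-1\}$ and $\psi^{-1}(b_1+k),\dots,\psi^{-1}(l')$ partition the circle-suffix $\{b+1,\dots,l\}$, with $k=2$ or $k=3$ according to the case.

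Next I would invoke Lemma~\ref{lem:computed} to determine $s(\Lambda')$. A necessary condition for $s(\Lambda')\neq 0$ is $b_1\in\{1,2\}$; since $\pi(1)=\circ$ whenever $a\ge 2$ forces $b_1\ge 2$, the prefix $\{1,\dots,a-1\}$ must collapse to the single block $\psi^{-1}(1)=\{1,\dots,a-1\}$ (so this prefix contributes the factor $1/(a-1)!$, and the edge case $a=1$ produces no prefix at all). The $v'$-inequalities supplied by case \eqref{case3} of Lemma~\ref{lem:computed} in case (i), and by cases \eqref{case2} or \eqref{case4} in case (ii), then translate word-for-word into the inequalities on $v$ stated in the lemma, handling the $v(1)+\cdots+v(a-1)\lessgtr\cdots$ conditions. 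Lemma~\ref{lem:computed} further records that $s(\Lambda')=(-1)^{l'}$ in case (i) and $s(\Lambda')=(-1)^{l'-1}$ in case (ii) (with the obvious adjustment when $a=1$ in case (ii), which lands in \eqref{case2} with a sign $(-1)^{l'}$; both conventions produce the same final answer).

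Finally, the sum over the suffix partitions is a sum over non-decreasing surjections from a set of $l-a-1$ (in case (i)) or $l-a-2$ (in case (ii)) elements, weighted by $\prod 1/|\psi^{-1}|!$ and by the sign extracted from $s(\Lambda')$. Pulling out the sign constants coming from the fixed bullet/singleton blocks and applying Lemma~\ref{lem:elem} collapses the suffix sum to $1/(l-a-1)!$ or $1/(l-a-2)!$ with a leftover sign $(-1)^{l-a}$ or $(-1)^{l-a-1}$; combining with the prefix factor $1/(a-1)!$ yields the stated formulas. The main obstacle is purely bookkeeping: keeping the indices on $\Lambda'$ synchronized with those on $\Lambda$ and handling the edge cases $a\in\{1,2\}$ where the relevant subcase of Lemma~\ref{lem:computed} switches among \eqref{case1}--\eqref{case4}; once these are organized, the computation is a direct application of Lemma~\ref{lem:elem}.
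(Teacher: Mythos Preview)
Your proposal is correct and follows essentially the same approach as the paper: classify the admissible $\psi$ using Lemma~\ref{lem:s=0}/Lemma~\ref{lem:computed} (forcing the circle-prefix $\{1,\dots,a-1\}$ into a single block and the bullet positions to be rigid), read off the non-vanishing conditions and the sign $s(\Lambda')$ from Lemma~\ref{lem:computed}, and then collapse the remaining sum over suffix partitions via Lemma~\ref{lem:elem}. The paper presents the same computation slightly more tersely, writing out only the $a\ge 2$ branch of case~(ii) and leaving the other branches as analogous.
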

\begin{proof}
The computations of (i) and (ii) are identical, so 
we only check (ii). 
Let $\psi \colon \{1, \cdots, l\} \to \{1, \cdots, l'\}$
be a map which appears in (\ref{sum:u}). 
By Lemma~\ref{lem:s=0}, the map 
$\pi(\psi)\colon \pi(\Lambda) \to \pi'(\Lambda')$ is one 
of the following forms, 
\begin{align*}&a=1 \qquad
\begin{array}{cccccc}
\stackrel{1}{\bullet} & \circ & \stackrel{3}{\bullet}
 & \circ \cdots \circ & \cdots & \circ \cdots \circ
\\
\downarrow & \downarrow & \downarrow & \downarrow & & \downarrow \\
\bullet & \circ & \bullet & \circ & \cdots & \circ 
\end{array} \\
&a\ge 2 \qquad 
\begin{array}{ccccccc}
\circ \cdots \circ & 
\stackrel{a}{\bullet} & \circ & \stackrel{a+2}{\bullet}
 & \circ \cdots \circ & \cdots & \circ \cdots \circ
\\
\downarrow & 
\downarrow & \downarrow & \downarrow & \downarrow & & \downarrow \\
\circ & \bullet & \circ & \bullet & \circ & \cdots & \circ 
\end{array}
\end{align*}
For simplicity we calculate the case of $a\ge 2$. 
By Lemma~\ref{lem:computed}, we see that $u^{(1)}(\Lambda)$ 
is non-zero only if (\ref{v<}) and (\ref{vge}) hold. 
By Lemma~\ref{lem:computed} and (\ref{sum:u}), we have 
\begin{align*}
u^{(1)}(\Lambda)=\frac{1}{(a-1)!}
\sum_{\begin{subarray}{c}
\psi \colon \{a+3, \cdots, l\} \to \{5, \cdots, l'\}, \\
\psi \emph{ \rm{is a non-decreasing} }\\
\emph{\rm{surjective map.}}
\end{subarray}}(-1)^{l'-1}
\prod_{i=5}^{l'}\frac{1}{\lvert \psi^{-1}(i)\rvert !}.
\end{align*}
Applying Lemma~\ref{lem:elem}, we obtain (\ref{u(1)}). 
\end{proof}
The computation of $u^{(2)}(\Lambda)$ is as follows. 
\begin{lem}\label{lem:compu2}
We have $u^{(2)}(\Lambda)\neq 0$ if and only if 
$V_{\bullet}=\{a, a+1\}$ for some $1\le a \le l-1$. 
In this case, we have 
\begin{align}\label{u2}
u^{(2)}(\Lambda)=\frac{(-1)^{l-a-1}}{2(a-1)!(l-a-1)!}.
\end{align}
\end{lem}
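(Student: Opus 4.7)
The plan is to first show that $u^{(2)}(\Lambda)=0$ unless the two $\bullet$-vertices of $\Lambda$ are adjacent, and then compute the sum in that case by reducing it via Lemma~\ref{lem:elem} to the closed form (\ref{u2}).

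For the vanishing, I would examine which maps $\psi$ in the defining sum (\ref{sum:u}) can contribute to $u^{(2)}(\Lambda)$, namely those with $l''=1$ and $\lvert \pi'^{-1}(\bullet) \rvert = 1$. Writing $V_{\bullet}=\{a,b\}$ with $a<b$, the latter condition forces $\psi(a)=\psi(b)$. By monotonicity, $\psi(i)=\psi(a)$ for all $a\le i\le b$, and by the color-preservation clause $\pi(i)=\pi(j)$ when $\psi(i)=\psi(j)$, every such $i$ must lie in $V_{\bullet}$. Since $\lvert V_{\bullet}\rvert=2$, this forces $b=a+1$.

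Assuming $V_{\bullet}=\{a,a+1\}$, I would set $c=\psi(a)=\psi(a+1)$ and decompose each admissible $\psi$ into three pieces: a non-decreasing surjection $\psi_1\colon \{1,\dots,a-1\}\twoheadrightarrow \{1,\dots,c-1\}$, the forced assignment on $\{a,a+1\}$ (contributing the factor $\tfrac{1}{2!}$ since $\lvert\psi^{-1}(c)\rvert=2$), and a non-decreasing surjection $\psi_2\colon \{a+2,\dots,l\}\twoheadrightarrow \{c+1,\dots,l'\}$. The induced vertex $\Lambda'$ has a single $\bullet$ at position $c$, so by Lemma~\ref{lem:s=0} only $c\le 2$ contributes. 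Existence of $\psi_1$ pins down the case uniquely: $c=1$ when $a=1$ (empty $\psi_1$) and $c=2$ when $a\ge 2$ (forcing $\psi_1$ to be constant and contributing $\tfrac{1}{(a-1)!}$).

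In both cases, Lemma~\ref{lem:coms} supplies the sign $s(\Lambda')$, namely $(-1)^{l'-1}$ for the type (\ref{pi2}) and $(-1)^{l'}$ for the type (\ref{pi1}). Substituting into (\ref{sum:u}), using $l''=1$ (so $\tfrac{(-1)^{l''+1}}{l''}=1$), and re-indexing the outer sum by the size and target of $\psi_2$, the residual sum matches the expression in Lemma~\ref{lem:elem} up to a uniform sign; it therefore collapses to $\tfrac{(-1)^{l-a-1}}{(l-a-1)!}$. Combining this with the factors $\tfrac{1}{2!}$ and $\tfrac{1}{(a-1)!}$ produces the formula (\ref{u2}) in both sub-cases. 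The principal obstacle I anticipate is consistent sign bookkeeping across the two sub-cases, together with correctly handling the degenerate boundary instances $a=1$ and $a=l-1$ where $\psi_1$ or $\psi_2$ has empty domain.
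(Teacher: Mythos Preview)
Your proposal is correct and follows essentially the same route as the paper: the paper likewise reduces to the case $V_{\bullet}=\{a,a+1\}$, uses Lemma~\ref{lem:s=0} to pin down the shape of $\pi(\psi)$, reads off the sign from Lemma~\ref{lem:coms}, and collapses the remaining sum over $\psi$ via Lemma~\ref{lem:elem}. Your argument is slightly more explicit in justifying adjacency of the two $\bullet$-vertices (the paper simply calls this obvious from the definition of $u^{(2)}$), but otherwise the computations coincide.
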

\begin{proof}
Suppose that $u^{(2)}(\Lambda)\neq 0$. 
By the definition of 
$u^{(2)}(\Lambda)$, 
it is obvious that $V_{\bullet}=\{a, a+1\}$
for some $1\le a \le l-1$. 
By Lemma~\ref{lem:s=0}, 
the map $\pi(\psi) \colon \pi(\Lambda) \to \pi'(\Lambda')$
is one of the following forms, 
\begin{align*}&a=1 \qquad
\begin{array}{cccc}
\stackrel{1}{\bullet} \ 
\stackrel{2}{\bullet} & \circ \cdots \circ & \cdots & \circ \cdots \circ
\\
\downarrow  & \downarrow & & \downarrow \\
\bullet  & \circ & \cdots & \circ 
\end{array} \\
&a\ge 2 \qquad 
\begin{array}{ccccc}
\circ \cdots \circ & 
\stackrel{a}{\bullet} \ 
\stackrel{a+1}{\bullet}  & \circ \cdots \circ & \cdots & \circ \cdots \circ
\\
\downarrow & 
\downarrow  & \downarrow & & \downarrow \\
\circ & \bullet & \circ & \cdots & \circ 
\end{array}
\end{align*}
For simplicity we treat the case of $a\ge 2$. 
By Lemma~\ref{lem:coms} and the definition of $u^{(2)}(\Lambda)$, we have 
\begin{align*}
u^{(2)}(\Lambda)=\frac{1}{2(a-1)!}
\sum_{\begin{subarray}{c}
\psi \colon \{a+2, \cdots, l\} \to \{3, \cdots, l'\}, \\
\psi \emph{ \rm{is a non-decreasing}} \\
\emph{\rm{surjective map}}.
\end{subarray}}(-1)^{l'}
\prod_{i=3}^{l'}\frac{1}{\lvert \psi^{-1}(i)\rvert !}.
\end{align*}
Applying Lemma~\ref{lem:elem}, we obtain (\ref{u2}). 
\end{proof}
Finally we compute $u^{(3)}(\Lambda)$. 
\begin{lem}\label{lem:compu3}
(i) Suppose that $V_{\bullet}=\{a, a+1\}$
for $1\le a \le l-1$. 
Then $u^{(3)}(\Lambda)$ is non-zero if and only if 
the following condition holds, 
\begin{align*}
v(1)+v(2)+\cdots +v(a-1)=v(a+2)+\cdots +v(l).
\end{align*}
In this case, we have 
\begin{align*}
u^{(3)}(\Lambda)=\frac{(-1)^{l-a}}{2(a-1)!(l-a-1)!}.
\end{align*}

(ii) Suppose that $V_{\bullet}=\{a, a+2\}$
for $1\le a \le l-2$. Then $u^{(3)}(\Lambda)$ is non-zero 
either one of the following conditions holds, 
\begin{align}\label{sumv1}
&v(1)+\cdots +v(a-1)=v(a+1)+\cdots +v(l), \\
\label{sumv2}
&v(1)+\cdots +v(a-1)+v(a+1)=v(a+2)+\cdots +v(l).
\end{align}
If (\ref{sumv1}) (resp.~(\ref{sumv2})) holds, 
then we have 
\begin{align}\label{comu2}
u^{(3)}(\Lambda)=\frac{(-1)^{l-a-1}}{2(a-1)!(l-a-1)!}, 
\quad \left(resp.~\frac{(-1)^{l-a}}{2(a-1)!(l-a-1)!}.\right)
\end{align}
\end{lem}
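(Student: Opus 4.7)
The plan is to expand the defining sum (\ref{sum:u}) in the regime $l''=2$, use the $s$-value analyses of Lemmas~\ref{lem:coms} and \ref{lem:computed} on the two resulting blocks, and collapse the remaining internal $\psi$-sums via Lemma~\ref{lem:elem}. The core observation is that any pair $(\psi,\xi)$ contributing to $u^{(3)}(\Lambda)$ induces a partition $\{1,\dots,l\}=P_1\sqcup P_2$ into two consecutive ordered blocks (since both $\psi$ and $\xi$ are non-decreasing surjective), and the slope equality $\mu(\sum_{k\in P_1}v_k)=\mu(\sum_{k\in P_2}v_k)$ imposed in (\ref{sum:u}) forces each $P_i$ to have total charge $(1,n/2)$. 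In particular each block must contain exactly one of the two black vertices.

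In Case (i), $V_\bullet=\{a,a+1\}$: adjacency of the blacks forces the unique split $P_1=\{1,\dots,a\}$, $P_2=\{a+1,\dots,l\}$, and matching the $n$-components yields exactly the stated balance equation. In Case (ii), $V_\bullet=\{a,a+2\}$: the split can land either between $a$ and $a+1$ (yielding (\ref{sumv1})) or between $a+1$ and $a+2$ (yielding (\ref{sumv2})), and these two alternatives cannot hold simultaneously since subtracting them would force $v(a+1)=0$.

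For each valid $\xi$, I would enumerate the admissible $\psi$'s on $P_1$ and $P_2$ separately. For a block of pattern $\circ\cdots\circ\bullet$, the slope analysis (as in the proof of Lemma~\ref{lem:coms}) forces $\psi$ to merge all whites into a single class with resulting $s$-value $+1$; a block of pattern $\bullet\circ\cdots\circ$ admits every color-preserving $\psi$, contributing $s=(-1)^{\beta-1}$ where $\beta$ is the number of $\psi$-classes; and a block of interior pattern $\circ\bullet\circ\cdots\circ$ (arising in Case (ii)) is treated by noting that the leading white is a forced singleton, while the trailing whites group freely with $s$-value $(-1)^{\beta}$. Multiplying the universal prefactor $(-1)^{l''+1}/l''=-1/2$ by the product of the two $s$-values and the multinomial factor $\prod_b 1/\lvert\psi^{-1}(b)\rvert!$, and finally collapsing the trailing-whites sum via Lemma~\ref{lem:elem}, yields the claimed closed-form values.

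The main obstacle is not conceptual but careful sign bookkeeping. In Case (ii) the two subcases must produce values differing by a single sign; the alternation $(-1)^{l-a-1}$ vs $(-1)^{l-a}$ traces back to the parity of the number of freely-grouping whites in the relevant $\bullet\circ\cdots\circ$ or $\circ\bullet\circ\cdots\circ$ block, combined with the sign output of Lemma~\ref{lem:elem}. One should also handle the boundary cases ($a=1$, $a=l-2$, and related degenerations) where the $\circ\cdots\circ\bullet$ or $\bullet\circ\cdots\circ$ analysis collapses, but the resulting formulas extend seamlessly via the convention $0!=1$.
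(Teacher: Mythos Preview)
Your proposal is correct and follows essentially the same route as the paper's proof: the paper likewise observes that the $l''=2$ data splits $\{1,\dots,l\}$ into two consecutive blocks, each forced to contain exactly one element of $V_{\bullet}$ by the slope-matching condition, then lists the possible cut points (your Case (i) split and the two Case (ii) splits), applies Lemma~\ref{lem:coms} to each block after $\psi$, and finishes with Lemma~\ref{lem:elem}. Your additional remarks---that the two Case (ii) balance conditions are mutually exclusive, and the explicit block-by-block description of which whites merge freely---are consistent with the paper's diagrammatic treatment and fill in details the paper leaves implicit.
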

\begin{proof}
The computations of (i), (ii) are identical, 
so we only check (ii). Suppose that $u^{(3)}(\Lambda)\neq 0$
and let 
$\psi \colon \{1, \cdots, l\} \to \{1, \cdots, l'\}$ and 
$\xi \colon \{1, \cdots, l'\} \to \{1, 2\}$
be maps which appear in the sum (\ref{sum:u}). 
By the definition of $u^{(3)}(\Lambda)$, 
the subset $(\psi \circ \xi)^{-1}(i)$
contains an element of $V_{\bullet}$
for $i=1, 2$. Therefore 
$(\psi \circ \xi)^{-1}(1)$ is one of the following, 
\begin{align}\label{one1}
&(\psi \circ \xi)^{-1}(1)=\{1, 2, \cdots, a\}, \\
\label{one2}
&(\psi \circ \xi)^{-1}(1)=\{1, 2, \cdots, a, a+1\}.
\end{align}
If (\ref{one1}) (resp.~(\ref{one2})) holds, 
then the condition (\ref{sumv1}) (resp.~(\ref{sumv2})) holds. 
For simplicity we treat the case in which (\ref{one1}) holds. 
The map $\pi(\psi)\colon 
\pi(\Lambda) \to \pi'(\Lambda')$
together with the map $\xi$ is as follows, 
\begin{align*}
\begin{array}{ccccccc}
(\circ \cdots \circ & 
\stackrel{a}{\bullet}) & (\circ & \stackrel{a+2}{\bullet}
 & \circ \cdots \circ & \cdots & \circ \cdots \circ)
\\
\downarrow & 
\downarrow & \downarrow & \downarrow & \downarrow & & \downarrow \\
(\circ & \bullet) & (\circ & \bullet & \circ & \cdots & \circ ) \\
\quad \qquad \downarrow {_{\xi}} & & & &  \downarrow {_{\xi}} & & \\
\quad \qquad 1 & & & &2 &  & 
\end{array}
\end{align*}
By Lemma~\ref{lem:coms} and the definition of $u^{(3)}(\Lambda)$, 
we have 
\begin{align*}
u^{(3)}(\Lambda)=-\frac{1}{2}
\cdot \frac{1}{(a-1)!}
\sum_{\begin{subarray}{c}
\psi \colon \{a+3, \cdots, l\} \to \{5, \cdots, l'\}, \\
\psi \emph{ \rm{is a non-decreasing}} \\
\emph{\rm{surjective map.}}
\end{subarray}}(-1)^{l'}
\prod_{i=5}^{l'}\frac{1}{\lvert \psi^{-1}(i)\rvert !}.
\end{align*}
Applying Lemma~\ref{lem:elem}, we obtain (\ref{comu2}). 
\end{proof}

\subsection{Generating series of $\DT(2, n)$}
Combining the calculations in the previous subsections, 
we compute $\DT(2, n)$. 
We divide $\DT(2, n)$ into the 
following four parts, 
\begin{align*}
\DT(2, n)=\DT^{(0)}(2, n)+\DT^{(1)}(2, n) +\DT^{(2)}(2, n)
+\DT^{(3)}(2, n).
\end{align*}
Each $\DT^{(i)}(2, n)$ is the following. 
\begin{itemize}
\item $\DT^{(0)}(2, n)$ is defined by the sum (\ref{DTrn})
for bi-colored weighted ordered vertices
$\Lambda=(V, \pi, v, \le)$ with $r(\Lambda)=2$, $n(\Lambda)=n$
and $\lvert V_{\bullet}\rvert =1$. 
\item For $1\le i\le 3$, 
$\DT^{(i)}(2, n)$ is defined by the sum (\ref{DTrn})
for bi-colored weighted ordered vertices $\Lambda=(V, \pi, v, \le)$
with $r(\Lambda)=2$, $n(\Lambda)=n$,
$\lvert V_{\bullet}\rvert =2$, 
and $u(\Lambda)$ is replaced by $u^{(i)}(\Lambda)$. 
\end{itemize}
We define the generating series $\DT^{(i)}(2)$ by 
\begin{align*}
\DT^{(i)}(2)=\sum_{n\ge 0}\DT^{(i)}(2, n)q^n.
\end{align*}
In what follows, we compute $\DT^{(i)}(2)$. 
Recall the definition of the MacMahon function $M(q)$ 
given in (\ref{Mac}). 
\begin{lem}\label{lem:DT^0}
We have the following formula.
\begin{align}\label{DT(0)}
\DT^{(0)}(2)=\frac{1}{4}M(q)^{2\chi(X)}.
\end{align}
\end{lem}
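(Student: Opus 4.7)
The plan is to specialize the wall-crossing formula (\ref{DTrn}) of Theorem~\ref{thm:DTrn} to the stratum $|V_{\bullet}|=1$, carry out the combinatorial sum explicitly, and recognize the result as an exponential generating series in $T(q):=\sum_{w\ge 1} w\,\DT(0,w)q^w$.

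First I describe $\eE(\Lambda)$ when $|V_{\bullet}|=1$. The color-alternation condition $\pi s(e)\ne\pi t(e)$ forces every edge to meet the unique bullet vertex, and the connected-and-simply-connected requirement forces exactly $l-1$ edges (a spanning tree), so $\eE(\Lambda)$ consists of the single ``star'' quiver centered at the bullet. If the bullet sits at position $a$ (of weight $2$) and the circles carry weights $w_j:=v(j)$ for $j\ne a$, then $|E_{\bullet\to\circ}|=\#\{j>a\}=l-a$ and each edge contributes $v(s(e))v(t(e))=2w_j$, giving
\[
\sum_{(E,s,t)\in\eE(\Lambda)}(-1)^{|E_{\bullet\to\circ}|}\prod_{e\in E} v(s(e))v(t(e))=(-1)^{l-a}\,2^{l-1}\prod_{j\ne a}w_j.
\]

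Next I plug this into (\ref{DTrn}) together with $u(\Lambda)=(-1)^{l-a}/((a-1)!(l-a)!)$ from Lemma~\ref{lem:calu}, $\DT(\Lambda)=\tfrac14\prod_{j\ne a}\DT(0,w_j)$ from (\ref{DTr0}), and $(-1)^{rn}=1$. The two copies of $(-1)^{l-a}$ cancel, while $(-1/2)^{l-1}\cdot 2^{l-1}=(-1)^{l-1}$. Summing over the bullet position via $\sum_{a=1}^{l}\tfrac{1}{(a-1)!(l-a)!}=\tfrac{2^{l-1}}{(l-1)!}$ and setting $m=l-1$ yields
\[
\DT^{(0)}(2,n)=\frac{1}{4}\sum_{m\ge 0}\frac{(-2)^m}{m!}\sum_{\substack{w_1,\ldots,w_m\ge 1\\ w_1+\cdots+w_m=n}}\prod_{i=1}^{m} w_i\,\DT(0,w_i),
\]
which at the level of generating series reads $\DT^{(0)}(2)=\tfrac14\exp\bigl(-2T(q)\bigr)$.

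Finally, expanding the MacMahon product gives $\log M(q)=\sum_{j,k\ge 1}(k/j)\,q^{jk}$, and a divisor-sum manipulation (substituting $d'=w/d$ in $w\sum_{d\mid w}1/d^2$) yields the identity $\sum_{w\ge 1} w\bigl(\sum_{d\mid w}1/d^2\bigr)q^w=\log M(q)$. Combined with (\ref{DT0n}) this gives $T(q)=-\chi(X)\log M(q)$ and hence $\DT^{(0)}(2)=\tfrac14\,M(q)^{2\chi(X)}$. The computation is essentially mechanical; the only nontrivial points are the cancellation of signs and the verification of the arithmetic identity relating $\log M(q)$ to $\sum_w w\bigl(\sum_{d\mid w}1/d^2\bigr)q^w$, which I expect to be the main bookkeeping obstacle.
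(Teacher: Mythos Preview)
Your proof is correct and follows essentially the same route as the paper: identify the unique star tree in $\eE(\Lambda)$, plug in $u(\Lambda)$ from Lemma~\ref{lem:calu} and $\DT(\Lambda)=\tfrac14\prod\DT(0,w_j)$, cancel the two factors of $(-1)^{l-a}$, collapse the sum over the bullet position via the binomial identity, and recognize the result as $\tfrac14\exp\bigl(-2\sum_w w\,\DT(0,w)q^w\bigr)$. The only cosmetic difference is in the final step: the paper invokes (\ref{form:exp}) directly, whereas you expand $\log M(q)$ and match it against (\ref{DT0n}) by the divisor-sum substitution; both yield $-2T(q)=2\chi(X)\log M(q)$.
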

\begin{proof}
Let $\Lambda=(V, \pi, v, \le)$ be a  
bi-colored weighted ordered vertex with 
$\lvert V \rvert =l$ and $V_{\bullet}=\{a\}$
for $1\le a \le l$. 
Obviously the set $\eE(\Lambda)$ consists of 
one element $(E, s, t)\in \eE(\Lambda)$,
 whose geometric realization is as 
follows, 
\begin{align*}
\begin{xy} 
 { \ar @/^4mm/ (37,0)*{\circ}*+!D{_{1}};(60,0) *{\bullet}*+!D{_{a}}
 \ar @{.}(48,0)*{\cdots }*+!D{};(48,0) *{}
 \ar  (55,0)*{\circ}*+!D{};(60,0) *{}
 \ar (60,0)*{}*+!D{};(65,0) *{\circ}
 \ar @{.}(74,0)*{\cdots }*+!D{};(74,0) *{}
 \ar @/^4mm/ (60,0)*{}*+!D{};(83,0) *{\circ}*+!D{_{l}}
   }
  \end{xy}
\end{align*}
Note that we have $\lvert E_{\bullet \to \circ} \rvert =l-a$. 
By Remark~\ref{rmk:Mac}, Theorem~\ref{thm:DTrn}
 and Lemma~\ref{lem:calu}, we have 
 \begin{align}\notag
 \DT^{(0)}(2)&=\sum_{\begin{subarray}{c}
 l\ge 1, \ 1\le a \le l, \\
 v\colon \{1, \cdots, l\} \to \mathbb{Z}_{\ge 1}, \\
 v(a)=2. \end{subarray} }
 \frac{(-1)^{l-a}}{(a-1)!(l-a)!}\cdot \frac{1}{4}\prod_{i\neq a}
 \DT(0, v(i))q^{v(i)} \\ 
 \notag
 & \qquad \qquad \qquad \qquad \qquad
 \qquad \left( -\frac{1}{2}\right)^{l-1}\cdot 
 (-1)^{l-a}\prod_{i\neq a}2v(i) \\
 \label{comDT0}
 &=\frac{1}{4}\sum_{\begin{subarray}{c}
 l\ge 0,  \\
 v\colon \{1, \cdots, l\} \to \mathbb{Z}_{\ge 1}.
 \end{subarray}}
 \frac{1}{l!}\prod_{i=1}^{l}(-2v(i))\DT(0, v(i))q^{v(i)} \\
 \notag
 &=\frac{1}{4}\exp\left(\sum_{n\ge 0}-2n \DT(0, n)q^n \right) \\
 \label{comDT00}
 &=\frac{1}{4}M(q)^{2\chi(X)}.
 \end{align}
 Here we have used the following in (\ref{comDT0}), 
 \begin{align*}
 \sum_{1\le a\le l}\frac{1}{(a-1)!(l-a)!}\cdot \frac{1}{2^{l-1}}
 =\frac{1}{(l-1)!},
 \end{align*}
and the formula (\ref{form:exp}) in (\ref{comDT00}). 
\end{proof}
Next let us compute $\DT^{(1)}(2)$. 
We introduce the following notation. 
We define the series $N(q)$ to be 
\begin{align*}N(q) 
&\cneq q\frac{d}{dq}\log M(q) \\
&=\sum_{n\ge 0, \ r|n}r^2 q^n. 
\end{align*}
For series $f_1, f_2, \cdots, f_N \in \mathbb{Q}\db[q\db]$
given by 
$$f_i=\sum_{n\ge 0}a_{n}^{(i)}q^n, \quad 1\le i\le N, $$
and a subset $\Delta \subset \mathbb{Z}^{N}_{\ge 0}$, we define the series 
$\{f_1 \cdot f_2 \cdots f_N\}_{\Delta}$ to be 
\begin{align}\label{nota}
\{f_1 \cdot f_2 \cdots f_N\}_{\Delta}
=\sum_{(n_1, n_2, \cdots, n_N)\in \Delta}
a_{n_1}^{(1)}a_{n_2}^{(2)}\cdots a_{n_N}^{(N)}
q^{n_1+n_2+\cdots +n_N}.
\end{align}
\begin{lem}\label{lem:formDT22}
We have the following formula,
\begin{align}\label{form:DT22}
\DT^{(1)}(2)=
-\frac{\chi(X)}{2}\{M(q)^{\chi(X)}
\cdot M(q)^{\chi(X)}\cdot N(q)\}_{\Delta}.
\end{align}
Here $\Delta \subset \mathbb{Z}^3_{\ge 0}$ is 
\begin{align}\label{lem:Delta}
\Delta=\{(m_1, m_2, m_3)\in \mathbb{Z}^3_{\ge 0} : 
-m_3 \le m_1-m_2 < m_3\}.
\end{align}
\end{lem}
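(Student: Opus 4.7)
The plan is to decompose $\DT^{(1)}(2) = \DT^{(1),\mathrm{I}}(2) + \DT^{(1),\mathrm{II}}(2)$ according to whether $V_\bullet = \{a,a+1\}$ (case $\mathrm{I}$) or $V_\bullet = \{a,a+2\}$ (case $\mathrm{II}$); configurations with $|V_\bullet|=2$ and $b-a\ge 3$ contribute zero by Lemma~\ref{lem:nocont}, and in the two surviving cases $v(j)=1$ for every $j\in V_\bullet$. For either case I first evaluate the inner tree sum $\sum_{(E,s,t)\in\eE(\Lambda)}(-1)^{|E_{\bullet\to\circ}|}\prod_e v(s(e))v(t(e))$ by observing that since $(V,E,s,t)$ is a tree in the complete bipartite graph on $V_\bullet\amalg V_\circ$ and $|V_\bullet|=2$, a degree count forces exactly one $\circ$-vertex $c$ to have degree two while all other $\circ$-vertices have degree one. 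Any edge between a $\circ$-vertex $i$ and a $\bullet$-vertex $j$ contributes $v(i)\cdot(-1)^{[j<i]}$ to the weighted product.

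In case $\mathrm{II}$ the middle vertex $a+1$ plays the decisive role: whenever $a+1\ne c$, its unique incident edge may attach to $a$ (contributing weight $-v(a+1)$) or to $a+2$ (contributing weight $+v(a+1)$), and these choices cancel, so only trees with $c=a+1$ survive. The resulting tree sum is $-v(a+1)^2 P_L P_R(-1)^{l-a-2}\,2^{l-3}$, with $L=\{1,\dots,a-1\}$, $R=\{a+3,\dots,l\}$, $P_L=\prod_L v(i)$, $P_R=\prod_R v(i)$. Multiplying by $u^{(1)}(\Lambda)$ from Lemma~\ref{lem:u(1)}(ii), incorporating the constraint $-v(a+1)\le S_L - S_R < v(a+1)$ that makes $(S_L,S_R,v(a+1))\in\Delta$, and summing vertex weights to produce $g(q):=\sum_n n\DT(0,n)q^n=-\chi(X)\log M(q)$ on $L$ and $R$ together with $h(q):=\sum_n n^2\DT(0,n)q^n=-\chi(X)N(q)$ at the middle vertex, the double sum over $(l,a)$ collapses under $p=a-1$, $p'=l-a-2$ by tri-linearity of $\{\cdot\}_\Delta$ to $\tfrac{1}{4}\{e^{-g}\cdot e^{-g}\cdot h\}_\Delta = -\tfrac{\chi(X)}{4}\{M(q)^{\chi(X)}\cdot M(q)^{\chi(X)}\cdot N(q)\}_\Delta$.

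In case $\mathrm{I}$ the two $\bullet$'s are consecutive and any $\circ$-vertex can have degree two; summing over all bridge choices and non-bridge assignments yields tree sum $2^{l-3} P_L P_R(-1)^{l-a-1}(S_L-S_R)$ with $L=\{1,\dots,a-1\}$, $R=\{a+2,\dots,l\}$ and constraint $S_L<S_R$ from Lemma~\ref{lem:u(1)}(i). After the analogous collapse, $\DT^{(1),\mathrm{I}}(2) = \tfrac{1}{4}\sum_{s_L<s_R}(s_L-s_R)a_{s_L}a_{s_R}q^{s_L+s_R}$, where $a_m=[q^m]M(q)^{\chi(X)}$. To recover the target form, I will invoke the logarithmic-derivative identity $q(M^{\chi(X)})'(q)=\chi(X)M^{\chi(X)}N(q)$, i.e.\ $k a_k = \chi(X)\sum_{j\ge 1}a_{k-j}\sigma_2(j)$, applied to $s_L a_{s_L}$ and $s_R a_{s_R}$ separately. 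The substitutions $(m_1,m_2,m_3)=(s_L-j,s_R,j)$ and $(s_L,s_R-j,j)$ rewrite the case $\mathrm{I}$ sum as $\chi(X)\sum a_{m_1}a_{m_2}\sigma_2(m_3)q^{m_1+m_2+m_3}$ weighted by the indicator difference $[m_1+m_3<m_2]-[m_1<m_2+m_3]$; a short three-case check identifies this difference with $-[(m_1,m_2,m_3)\in\Delta]$, so case $\mathrm{I}$ also equals $-\tfrac{\chi(X)}{4}\{M(q)^{\chi(X)}\cdot M(q)^{\chi(X)}\cdot N(q)\}_\Delta$, and summing the two cases produces the stated formula. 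The main obstacle is precisely this final identification: case $\mathrm{I}$'s combinatorial formula contains no manifest $N(q)$ factor, and matching it to the $\Delta$-truncated triple product requires both the logarithmic-derivative identity and the exact cancellation of the two indicator regions.
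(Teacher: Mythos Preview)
Your proof is correct. Your Case~$\mathrm{II}$ is exactly the paper's ``Type~C'' computation, and you are in fact more explicit than the paper in arguing why only the tree with bridge vertex $c=a+1$ survives (the paper relies tacitly on the same swap argument as Lemma~\ref{lem:nocont}).

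The genuine difference is in Case~$\mathrm{I}$. The paper refines this case further by the location of the bridge vertex $c$: Type~A has $c\in\{1,\dots,a-1\}$ and Type~B has $c\in\{a+2,\dots,l\}$. In each type the bridge vertex contributes $v(c)^2\DT(0,v(c))$, and summing over the weight $k=v(c)$ gives $\sum_k(-k^2)\DT(0,k)q^k=\chi(X)N(q)$ directly; the remaining vertices produce the two $M(q)^{\chi(X)}$ factors, and one reads off $\DT_A^{(1)}(2)=\tfrac{\chi(X)}{4}\{M^{\chi}\cdot M^{\chi}\cdot N\}_{\Delta_A}$ and $\DT_B^{(1)}(2)=-\tfrac{\chi(X)}{4}\{M^{\chi}\cdot M^{\chi}\cdot N\}_{\Delta_B}$ with $\Delta_A=\{m_1+m_3<m_2\}$, $\Delta_B=\{m_1<m_2+m_3\}$. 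The proof then finishes with the set-theoretic identity $\Delta_B=\Delta_A\amalg\Delta$. You instead sum over all bridge positions at once, obtaining the factor $(S_L-S_R)$, and then introduce $N(q)$ a posteriori via the logarithmic-derivative identity $q(M^{\chi})'=\chi M^{\chi}N$; your indicator calculation $[m_1+m_3<m_2]-[m_1<m_2+m_3]=-[(m_1,m_2,m_3)\in\Delta]$ is precisely the coefficient-level shadow of the paper's decomposition $\Delta_B\setminus\Delta_A=\Delta$. The paper's route is more transparent in that the $N(q)$ factor appears immediately from the distinguished bridge vertex; your route avoids the A/B split but trades it for the differentiation trick and the region-matching at the end. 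Both are short and either could be preferred.
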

\begin{proof}
Let $\Lambda=(V, \pi, v, \le)$ be 
a bi-colored weighted ordered vertex with 
$r(\Lambda)=2$, $n(\Lambda)=n$ and 
$\lvert V_{\bullet}\rvert =2$.
Let $\lvert V \rvert =l$ and we 
identify $V$ and $\{1, \cdots, l\}$
via $\le$. 
By Lemma~\ref{lem:nocont}, the data $\Lambda$
contributes to (\ref{DTrn}) 
only if one of the following conditions
hold. 
\begin{itemize}
\item We have $V_{\bullet}=\{a, a+1\}$ for 
$1\le a\le l-1$. 
In this case, there are two types for $(E, s, t)\in \eE(\Lambda)$. 

{\bf Type A: }
There is unique $1\le c\le a-1$ and $e, e'\in E$
such that 
\begin{align*}
s(e)=s(e')=c, \quad t(e)=a, \quad t(e')=a+1. 
\end{align*}
In this case, we have $\lvert E_{\bullet \to \circ} \rvert=l-a-1$. 
If we fix such $c$, there are $2^{l-3}$-choices of such 
$(E, s, t)\in \eE(\Lambda)$. One of their geometric realizations 
is as follows, 
\begin{align*}
\begin{xy} 
 { \ar @/^6mm/ (30,0)*{\circ}*+!D{};(65,0) *{}
 \ar @/_4mm/^{e} (43,0)*{\circ}*+!D{_{c}};(60,0) *{}
  \ar @/_6mm/_{e'} (43,0)*{\circ}*+!D{};(65,0) *{}
 \ar @{.}(37,0)*{\cdots }*+!D{};(37,0) *{}
  \ar @{.}(49,0)*{\cdots }*+!D{};(49,0) *{}
 \ar  (55,0)*{\circ}*+!D{};(60,0) *{\bullet}*+!D{_{a}}
 \ar (65,0)*{\bullet}*+!D{};(70,0) *{\circ}
 \ar @{.}(79,0)*{\cdots }*+!D{};(79,0) *{}
 \ar @/^6mm/ (65,0)*{}*+!D{};(88,0) *{\circ}
   }
  \end{xy}
\end{align*}

{\bf Type B: }
There is unique $a+2 \le c\le l$ and $e, e'\in E$
such that 
\begin{align*}
t(e)=t(e')=c, \quad s(e)=a+1, \quad s(e')=a. 
\end{align*}
In this case, we have $\lvert E_{\bullet \to \circ} \rvert=l-a$. 
If we fix such $c$, there are $2^{l-3}$-choices of such 
$(E, s, t)\in \eE(\Lambda)$. One of their geometric realizations 
is as follows, 
\begin{align*}
\begin{xy} 
 { \ar @/^6mm/ (30,0)*{\circ}*+!D{};(53,0) *{\bullet}*+!D{_{a}}
  \ar (48,0)*{\circ}*+!D{};(53,0) *{}*+!D{}
 \ar @/_4mm/^{e} (58,0)*{\bullet}*+!D{};(75,0) *{}*+!D{}
   \ar (58,0)*{}*+!D{};(63,0) *{\circ}*+!D{}
 \ar @/_6mm/_{e'} (53,0)*{}*+!D{};(75,0) *{\circ}*+!D{_{c}}
 \ar @/^6mm/ (53,0)*{}*+!D{};(88,0) *{\circ}*+!D{}
 \ar @{.}(40,0)*{\cdots }*+!D{};(40,0) *{}
 \ar @{.}(68,0)*{\cdots }*+!D{};(68,0) *{}
 \ar @{.}(81,0)*{\cdots }*+!D{};(81,0) *{}
     }
  \end{xy}
\end{align*}
\item 
We have $V_{\bullet}=\{a, a+2 \}$
for $1\le a \le l-2$. In this case, 
we call an element $(E, s, t)\in \eE(\Lambda)$ as Type C. 

{\bf Type C:}
There is $e, e'\in E$ 
such that 
\begin{align*}
s(e)=a, \quad t(e)=s(e')=a+1, \quad t(e')=a+2.
\end{align*}
There are $2^{l-3}$-choices of $(E, s, t)\in \eE(\Lambda)$. 
One of their geometric realizations is as follows,
\begin{align*}
\begin{xy} 
 { \ar @/_6mm/ (30,0)*{\circ}*+!D{};(63,0) *{\bullet}*+!D{}
  \ar (48,0)*{\circ}*+!D{};(53,0) *{\bullet}*+!D{_{a}}
  \ar_{e} (53,0)*{}*+!D{};(58,0) *{\circ}*+!D{}
    \ar^{e'} (58,0)*{}*+!D{};(63,0) *{\bullet}*+!D{}
    \ar (63,0)*{}*+!D{};(68,0) *{\circ}*+!D{}
 \ar @/^6mm/ (53,0)*{}*+!D{};(88,0) *{\circ}*+!D{}
 \ar @{.}(40,0)*{\cdots }*+!D{};(40,0) *{}
  \ar @{.}(78,0)*{\cdots }*+!D{};(78,0) *{}
     }
  \end{xy}
  \end{align*}
\end{itemize}
We write $\DT^{(1)}(2)$ as 
\begin{align*}
\DT^{(1)}(2)=\DT_{A}^{(1)}(2)+\DT_{B}^{(1)}(2)+\DT_{C}^{(1)}(2), 
\end{align*}
where $\DT_{A}^{(1)}(2)$, $\DT_{B}^{(1)}(2)$
and $\DT_{C}^{(1)}(2)$ are contributions of $(E, s, t)\in \eE(\Lambda)$
of type $A$, $B$ and $C$ respectively. 
Using Lemma~\ref{lem:u(1)} (i) and Theorem~\ref{thm:DTrn}, 
the series $\DT_{A}^{(1)}(2)$
is computed as follows, 
\begin{align}\notag
\DT_{A}^{(1)}(2)&=\sum_{\begin{subarray}{c}
l\ge 1, \ 1\le a \le l-1, \ 1\le c\le a-1, \\
\notag
v\colon \{1, \cdots, l\} \to \mathbb{Z}_{\ge 1}, \
v(a)=v(a+1)=1, \\
\notag
v(1)+\cdots +v(a-1)<v(a+2)+\cdots +v(l).
\end{subarray}}
\frac{(-1)^{l-a}}{(a-1)!(l-a-1)!}\prod_{i\neq a, a+1}
\DT(0, v(i))q^{v(i)} \\
\notag
&\qquad \qquad \qquad \qquad \qquad \qquad 
\left( -\frac{1}{2} \right)^{l-1}\cdot (-1)^{l-a-1} \cdot 2^{l-3}
\prod_{i\neq c}v(i) \cdot v(c)^2 \\
\notag
&=\frac{1}{4}\sum_{\begin{subarray}{c}
a\ge 0, \ b\ge 0, \ k\ge 1, \\
v\colon \{1, \cdots, a\} \to \mathbb{Z}_{\ge 1}, \
v' \colon \{1, \cdots, b\} \to \mathbb{Z}_{\ge 1}, \\
v(1)+\cdots +v(a)+k<v'(1)+\cdots +v'(b).
\end{subarray}}
\frac{1}{a!}\prod_{i=1}^{a}(-v(i))\DT(0, v(i))q^{v(i)} \\
\notag
& \qquad \qquad \qquad \qquad \qquad 
\cdot \frac{1}{b!}\prod_{i=1}^{b}(-v'(i))\DT(0, v(i))q^{v'(i)} \cdot 
(-k^2)\DT(0, k)q^k \\
\label{DT_A}
&=\frac{\chi(X)}{4}\{M(q)^{\chi(X)} \cdot M(q)^{\chi(X)} \cdot 
N(q)\}_{\Delta_A}.
\end{align}
 Here $\Delta_{A}$ is defined by 
 \begin{align*}
 \Delta_{A}=\{(m_1, m_2, m_3)\in \mathbb{Z}_{\ge 0}^{3} :
 m_1 +m_3<m_2\},
 \end{align*}
 and we have used the formula (\ref{DT0n}) 
 in (\ref{DT_A}). 
 Using Lemma~\ref{lem:u(1)}, similar computations show 
 that 
 \begin{align*}
 \DT_{B}^{(1)}(2)&=
 -\frac{\chi(X)}{4}\{M(q)^{\chi(X)} \cdot M(q)^{\chi(X)} \cdot 
N(q)\}_{\Delta_B}, \\
\DT_{C}^{(1)}(2)&=
 -\frac{\chi(X)}{4}\{M(q)^{\chi(X)} \cdot M(q)^{\chi(X)} \cdot 
N(q)\}_{\Delta},
 \end{align*}
 where $\Delta_B$ is defined by 
 \begin{align*}
 \Delta_{B}&=\{(m_1, m_2, m_3)\in \mathbb{Z}_{\ge 0}^{3} :
 m_1 <m_2 +m_3\}, 
 \end{align*}
 and $\Delta$ is defined by (\ref{lem:Delta}). 
 Noting that 
 $$\Delta_{B}=\Delta_{A} \coprod \Delta,$$
 we obtain the formula (\ref{form:DT22}). 
 \end{proof}
 Finally we show that 
 $\DT^{(i)}(2)$ vanish for $i=2, 3$. 
 \begin{lem}\label{lem:DT^2}
 We have $\DT^{(i)}(2, n)=0$ for any $n\ge 0$ and 
 $i=2, 3$. 
 \end{lem}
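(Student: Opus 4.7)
The plan is to follow the method of the proof of Lemma~\ref{lem:formDT22}, expressing each of $\DT^{(2)}(2)$ and $\DT^{(3)}(2)$ as a sum of bracket series $\{M(q)^{\chi(X)} \cdot M(q)^{\chi(X)} \cdot N(q)\}_{\Delta}$ and then observing that the pieces cancel in pairs.

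For $\DT^{(2)}(2)$, Lemma~\ref{lem:compu2} restricts $V_{\bullet} = \{a, a+1\}$, so only the type~A and type~B quivers of the proof of Lemma~\ref{lem:formDT22} contribute. Since $u^{(2)}(\Lambda) = -\frac{1}{2} u^{(1)}(\Lambda)$ in this case (comparing Lemma~\ref{lem:compu2} with Lemma~\ref{lem:u(1)}(i)) and $u^{(2)}$ imposes no constraint on the weights, the calculations producing $\DT_A^{(1)}(2)$ and $\DT_B^{(1)}(2)$ yield
\begin{align*}
\DT_A^{(2)}(2) = -\frac{\chi(X)}{8} \{M(q)^{\chi(X)} \cdot M(q)^{\chi(X)} \cdot N(q)\}_{\mathbb{Z}_{\ge 0}^3},
\end{align*}
and $\DT_B^{(2)}(2)$ equal to the negative of this; their sum vanishes and $\DT^{(2)}(2, n) = 0$ for every $n$.

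For $\DT^{(3)}(2)$, Lemma~\ref{lem:compu3} gives four contributions: the type~A and type~B pieces with $V_{\bullet} = \{a, a+1\}$ under the equality constraint of subcase (i), and the type~C pieces with $V_{\bullet} = \{a, a+2\}$ under each of the conditions (\ref{sumv1}) and (\ref{sumv2}) in subcase (ii). Computing each piece as in the proof of Lemma~\ref{lem:formDT22} but with $u^{(3)}$ in place of $u^{(1)}$, I expect to obtain bracket series supported on hyperplanes of the form $\{m_1 + m_3 = m_2\}$ or $\{m_1 = m_2 + m_3\}$ in $\mathbb{Z}_{\ge 0}^3$. Exploiting the $m_1 \leftrightarrow m_2$ symmetry of $\{M(q)^{\chi(X)} \cdot M(q)^{\chi(X)} \cdot N(q)\}$ coming from the repeated $M(q)^{\chi(X)}$ factor, these two hyperplane series coincide, so the type~A piece cancels the type~B piece within subcase (i), while the sign flip of $u^{(3)}$ between conditions (\ref{sumv1}) and (\ref{sumv2}) forces cancellation of the two subcase (ii) pieces against each other.

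The main technical point is the generating-series computation in subcase (ii): one must identify the middle $\circ$ vertex at position $a+1$ as the ``special'' vertex whose weight produces the $N(q)$ factor, via the contribution $v(a+1)^2$ arising from the two type~C edges incident to it, while the remaining left and right $\circ$ vertices combine to produce the two $M(q)^{\chi(X)}$ factors. The delicate bookkeeping will be to match the combinatorial denominators $(a-1)!$ and $(l-a-1)!$ appearing in $u^{(3)}(\Lambda)$ against the symmetric groups acting on the left and right blocks of $\circ$ vertices; once this is done and the $m_1 \leftrightarrow m_2$ symmetry is invoked, all four pieces of $\DT^{(3)}(2)$ cancel pairwise, yielding $\DT^{(i)}(2, n) = 0$ for $i = 2, 3$ and every $n \ge 0$.
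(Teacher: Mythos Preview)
Your outline is correct and will go through, but it is a genuinely different argument from the paper's. The paper does not compute any bracket series for $i=2,3$. Instead it writes down a single combinatorial involution
\[
(\Lambda,(E,s,t))\ \longmapsto\ (\Lambda^{\ast},(E,s^{\ast},t^{\ast})),
\]
obtained by reversing the total order on $V$ and swapping source and target of every edge. Under this involution a type~A tree for $\Lambda$ becomes a type~B tree for $\Lambda^{\ast}$, and a type~C tree with condition~(\ref{sumv1}) becomes a type~C tree with condition~(\ref{sumv2}); one then checks directly from Lemma~\ref{lem:compu2} and Lemma~\ref{lem:compu3} that
\[
(-1)^{\lvert E_{\bullet\to\circ}\rvert}u^{(i)}(\Lambda)+(-1)^{\lvert E^{\ast}_{\bullet\to\circ}\rvert}u^{(i)}(\Lambda^{\ast})=0
\]
for $i=2,3$, giving term-by-term cancellation in the sum~(\ref{DTrn}) before any generating series is formed.

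Your approach is the generating-series shadow of this involution: the $m_{1}\leftrightarrow m_{2}$ symmetry you exploit in $\{M(q)^{\chi(X)}\cdot M(q)^{\chi(X)}\cdot N(q)\}$ is exactly what the order-reversal does after summation. The paper's route is shorter and avoids having to redo the bookkeeping of Lemma~\ref{lem:formDT22} four more times (two types in subcase~(i), two constraints in subcase~(ii)); on the other hand your route has the virtue that it reuses verbatim the machinery already set up for $\DT^{(1)}(2)$, so once that lemma is in hand there is nothing new to invent. Either way the key observation is the same left/right symmetry, just packaged differently.
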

 \begin{proof}
 Let $\Lambda=(V, \pi, v, \le)$ 
 be a bi-colored weighted ordered vertex with 
 $r(\Lambda)=2$, $\lvert V \rvert =l$, 
 and take $(E, s, t)\in \eE(\Lambda)$. 
 By Lemma~\ref{lem:nocont}, we may assume that $V_{\bullet}=\{a, a+1\}$
 or $V_{\bullet}=\{a, a+2\}$
 for some $1\le a \le l-1$. 
 Let us consider the following data, 
 \begin{align*}
 \Lambda^{\ast}=(V, \pi, v, \le^{\ast}), \quad 
 (E, s^{\ast}, t^{\ast}),
  \end{align*}
  by setting $\le^{\ast}$, $s^{\ast}$ and $t^{\ast}$
  to be 
  \begin{align*}
  \alpha \le^{\ast}\beta
  \mbox{ if and only if } \alpha \ge \beta, 
  \quad s^{\ast}=t, \quad 
  t^{\ast}=s. 
  \end{align*}
 Then it is obvious that $(E, s^{\ast}, t^{\ast})\in 
 \eE(\Lambda^{\ast})$.
 For instance, the relationship between 
 geometric realizations is as follows,  
\begin{align*}
(\Lambda, E, s, t): \quad
\begin{xy} 
 { \ar @/_6mm/ (30,0)*{\circ}*+!D{_{1}};(53,0) *{\bullet}*+!D{}
 \ar (53,0)*{}*+!D{};(58,0) *{\circ}*+!D{}
  \ar (48,0)*{\circ}*+!D{};(53,0) *{\bullet}*+!D{_{a}}
  \ar @/^3mm/(53,0)*{}*+!D{};(68,0) *{\circ}*+!D{}
    \ar (58,0)*{}*+!D{};(63,0) *{\bullet}*+!D{}
     \ar @/^6mm/ (53,0)*{}*+!D{};(88,0) *{\circ}*+!D{_{l}}
 \ar @{.}(40,0)*{\cdots }*+!D{};(40,0) *{}
  \ar @{.}(78,0)*{\cdots }*+!D{};(78,0) *{}
     }
  \end{xy}
\end{align*}
\begin{align*}
(\Lambda^{\ast}, E, s^{\ast}, t^{\ast}): \quad 
\begin{xy} 
 { \ar @/^6mm/ (30,0)*{\circ}*+!D{_{1}};(63,0) *{\bullet}*+!D{}
 \ar (53,0)*{\bullet}*-!D{};(58,0) *{\circ}*+!D{}
  \ar @/^3mm/(48,0)*{\circ}*+!D{};(63,0) *{\bullet}*+!D{_{l-a}}
  \ar (63,0)*{\bullet}*+!D{};(68,0) *{\circ}*+!D{}
    \ar (58,0)*{}*+!D{};(63,0) *{\bullet}*+!D{}
     \ar @/_6mm/ (63,0)*{}*+!D{};(88,0) *{\circ}*+!D{_{l}}
 \ar @{.}(40,0)*{\cdots }*+!D{};(40,0) *{}
  \ar @{.}(78,0)*{\cdots }*+!D{};(78,0) *{}
     }
  \end{xy}
\end{align*} 
 Note that if $V_{\bullet}=\{a, a+1\}$, 
 then $(E, s, t)$ is of type A (resp.~B) in the proof of 
 Lemma~\ref{lem:formDT22} if and only if 
 $(E^{\ast}, s^{\ast}, t^{\ast})$
  is of type B (resp.~A). 
 Also if $V_{\bullet}=\{a, a+2\}$, then 
 $\Lambda$ satisfies (\ref{sumv1}), (resp.~(\ref{sumv2}))
 if and only if $\Lambda^{\ast}$
 satisfies (\ref{sumv2}), (resp.~(\ref{sumv1}).)
 Hence the map  
 $$(\Lambda, (E, s, t)) \mapsto (\Lambda^{\ast}, (E, s^{\ast}, 
 t^{\ast})),$$
 is a free involution
 on the set of pairs $(\Lambda, (E, s, t))$
 for data (\ref{graph}) satisfying $V_{\bullet}=\{a, b\}$ with 
 $0<b-a \le 2$ 
 and $(E, s, t)\in \eE(\Lambda)$. 
 Using the computations of $u^{(2)}(\Lambda)$, $u^{(3)}(\Lambda)$
 in Lemma~\ref{lem:compu2} and Lemma~\ref{lem:compu3}, it is easy to 
 check that 
 \begin{align*}
 (-1)^{\lvert E_{\bullet \to \circ} \rvert}u^{(i)}(\Lambda)
 + (-1)^{\lvert E^{\ast}_{\bullet \to \circ} \rvert}u^{(i)}
 (\Lambda^{\ast})=0,
 \end{align*} 
 for $i=2, 3$. 
 Therefore $\DT^{(i)}(2, n)=0$ for any $n\ge 0$ and 
 $i=2, 3$. 
 \end{proof}
 Summarizing Lemma~\ref{lem:DT^0}, Lemma~\ref{lem:formDT22} 
 and Lemma~\ref{lem:DT^2}, 
 we obtain the following. 
 \begin{thm}\label{thm:formDT2}
We have the following formula. 
\begin{align}\label{DT2}
\DT(2)=\frac{1}{4}M(q)^{2\chi(X)} -\frac{\chi(X)}{2}\{M(q)^{\chi(X)}
\cdot M(q)^{\chi(X)}\cdot N(q)\}_{\Delta},
\end{align}
for $\Delta=\{(m_1, m_2, m_3)\in \mathbb{Z}_{\ge 0}^3 : 
-m_3 \le m_1-m_2 < m_3\}$. 
\end{thm}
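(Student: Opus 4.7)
The proof is the concluding assembly of the three preparatory lemmas Lemma~\ref{lem:DT^0}, Lemma~\ref{lem:formDT22}, and Lemma~\ref{lem:DT^2}. My plan is as follows. First, by Theorem~\ref{thm:DTrn} every contribution to $\DT(2,n)$ comes from a bi-colored weighted ordered vertex $\Lambda = (V, \pi, v, \le)$ with $r(\Lambda)=2$ and $n(\Lambda)=n$. Since $r(\Lambda)=\sum_{i \in V_\bullet} v(i) = 2$, exactly two cases can occur: either $|V_\bullet|=1$ with a single weight-$2$ vertex, or $|V_\bullet|=2$ with two weight-$1$ vertices. This dichotomy, together with the splitting $u(\Lambda) = u^{(1)}(\Lambda) + u^{(2)}(\Lambda) + u^{(3)}(\Lambda)$ introduced for the $|V_\bullet|=2$ case, justifies the decomposition $\DT(2) = \DT^{(0)}(2) + \DT^{(1)}(2) + \DT^{(2)}(2) + \DT^{(3)}(2)$ and shows it is exhaustive and disjoint.

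Second, I would substitute the results of the three lemmas into this decomposition. Lemma~\ref{lem:DT^0} gives $\DT^{(0)}(2) = \tfrac{1}{4} M(q)^{2\chi(X)}$; this is the ``generic'' part, arising from star-shaped quivers in $\eE(\Lambda)$ around the unique $\bullet$-vertex, and it exponentiates through Remark~\ref{rmk:Mac} into the square of the MacMahon factor. Lemma~\ref{lem:formDT22} gives $\DT^{(1)}(2) = -\tfrac{\chi(X)}{2}\{M(q)^{\chi(X)} \cdot M(q)^{\chi(X)} \cdot N(q)\}_{\Delta}$; this is the correction term, obtained by combining the three geometric types of quivers (A, B, C) that arise when $|V_\bullet|=2$ with $V_\bullet = \{a,a+1\}$ or $\{a,a+2\}$, and collapsing $\Delta_A$, $\Delta_B$, $\Delta$ using $\Delta_B = \Delta_A \amalg \Delta$. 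Lemma~\ref{lem:DT^2} gives $\DT^{(2)}(2) = \DT^{(3)}(2) = 0$, by exhibiting a free sign-reversing involution on the contributing pairs $(\Lambda, (E,s,t))$ — namely reversing the total order on $V$ and swapping $s \leftrightarrow t$. Summing the four contributions yields the formula (\ref{DT2}).

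The conceptual hard part is therefore entirely upstream of this theorem. The main obstacles lie in Lemma~\ref{lem:formDT22}, where one must carefully track signs and separately identify the constraint sets $\Delta_A$, $\Delta_B$, $\Delta$ arising from the three types of quivers, and in Lemma~\ref{lem:DT^2}, where the delicate involution argument relies on the precise form of $u^{(2)}$ and $u^{(3)}$ computed in Lemma~\ref{lem:compu2} and Lemma~\ref{lem:compu3}. Once those combinatorial obstacles are overcome, Theorem~\ref{thm:formDT2} itself reduces to a one-line summation of the three pieces, which is the step I would write down explicitly to finish the proof.
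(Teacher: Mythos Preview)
Your proposal is correct and matches the paper's approach exactly: the paper's proof of Theorem~\ref{thm:formDT2} is literally the one-line statement ``Summarizing Lemma~\ref{lem:DT^0}, Lemma~\ref{lem:formDT22} and Lemma~\ref{lem:DT^2}, we obtain the following,'' with all the work having been done in those three preparatory lemmas. Your explanation of why the decomposition $\DT(2) = \sum_{i=0}^{3}\DT^{(i)}(2)$ is exhaustive and disjoint is a welcome clarification of something the paper leaves implicit.
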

\begin{rmk}
By Corollary~\ref{cor:DT=Eu} and Theorem~\ref{thm:formDT2}, 
we have 
\begin{align*}
\sum_{n \ge 0}\Eu(2, n)q^n=
-\frac{1}{4}M(q)^{2\chi(X)} +\frac{\chi(X)}{2}\{M(q)^{\chi(X)}
\cdot M(q)^{\chi(X)}\cdot N(q)\}_{\Delta},
\end{align*}
for $\Delta=\{(m_1, m_2, m_3)\in \mathbb{Z}_{\ge 0}^3 : 
-m_3 \le m_1-m_2 < m_3\}$. 
\end{rmk}

\section{Integrality property}\label{sec:int}
In this section, we study the invariant 
$\Omega(2, n)\in \mathbb{Q}$, 
defined as follows. 
\begin{defi}\emph{
We define $\Omega(2, n)\in \mathbb{Q}$ to be 
\begin{align*}
\Omega(2, n)=\left\{ \begin{array}{cl}\DT(2, n), & n \mbox{ is odd,} \\
\DT(2, n)-\frac{1}{4}\DT(1, \frac{n}{2}), & n\mbox{ is even.}
\end{array}\right.\end{align*}
}
\end{defi}
By Corollary~\ref{cor:DT=Eu}, 
the invariant $\Omega(2, n)$ is also 
written as 
\begin{align}\label{OmegaE}
\Omega(2, n)=\left\{ \begin{array}{cl}-\Eu(2, n), & n \mbox{ is odd,} \\
-\Eu(2, n)-\frac{(-1)^{\frac{n}{2}}}{4}\Eu(1, \frac{n}{2}), & n\mbox{ is even.}
\end{array}\right.\end{align}
In this section, 
we show the following result, which 
is an evidence of the integrality 
conjecture by Kontsevich-Soibelman~\cite[Conjecture~6]{K-S}. 
\begin{thm}\label{thm:intconj}
We have $\Omega(2, n)\in \mathbb{Z}$. 
\end{thm}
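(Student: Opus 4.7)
The plan is first to derive a closed form for $\DT(2,n)$ from Theorem~\ref{thm:formDT2}, and then to reduce integrality of $\Omega(2,n)$ to a mod-$4$ congruence on the MacMahon coefficients. Writing $\alpha_m=[q^m]M(q)^{\chi(X)}$ and exploiting the identity $N(q)M(q)^{\chi}=\frac{1}{\chi}q\partial_q M(q)^{\chi}$ together with the involution $(m_1,m_2,m_3)\mapsto (m_2,m_1,m_3)$ on the summand $\alpha_{m_1}\alpha_{m_2}\beta_{m_3}$ of $\{M^\chi\cdot M^\chi\cdot N\}_{\Delta}$, I would split $\Delta$ against its image under the swap and evaluate the complementary region $\{|m_1-m_2|>m_3\}$ via $[q^k]M^\chi N=(k/\chi)\alpha_k$. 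The resulting closed form is
\[
\DT(2,n)=\sum_{0\le j<n/2}\frac{2j-n+1}{2}\alpha_j\alpha_{n-j}\;+\;\frac{\alpha_{n/2}^{2}}{4}\quad (n\mbox{ even}),
\]
with the last term absent when $n$ is odd.

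For odd $n$, every coefficient $2j-n+1$ is even, so $\DT(2,n)$ is an explicit integer combination of the integers $\alpha_j$; hence $\Omega(2,n)=\DT(2,n)\in\mathbb{Z}$. For even $n$, I would substitute $\DT(1,n/2)=(-1)^{n/2}\alpha_{n/2}$ (from $\DT(1)=M(-q)^{\chi(X)}$ and $M(-q)^{\chi}=\sum_m(-1)^m\alpha_m q^m$), and reduce $4\Omega(2,n)$ modulo $4$. Using $2\cdot(\mbox{odd})\equiv 2\pmod 4$ together with $\alpha_n^{(2\chi)}=\alpha_{n/2}^{2}+2\sum_{j<n/2}\alpha_j\alpha_{n-j}$, the statement collapses to the key congruence
\[
\alpha_n^{(2\chi)}\equiv (-1)^{n/2}\alpha_{n/2}^{(\chi)}\pmod 4,\qquad n\mbox{ even},\ \chi\in\mathbb{Z}.
\]

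I propose to deduce this from the single $\chi$-independent \emph{bootstrap identity}
\[
M(q)^{2}\equiv M(-q^{2})(1+2q)\pmod 4.
\]
Granting it, write $M(q)^{2}=M(-q^{2})(1+2q)+4H(q)$ and raise to the $\chi$-th power, extending to $\chi<0$ by inversion in the unit group $1+q\mathbb{Z}[[q]]$. Since $(2q)^{k}\equiv 0\pmod 4$ for $k\ge 2$, one has $(1+2q)^{\chi}\equiv 1+2\chi q\pmod 4$, hence $M(q)^{2\chi}\equiv M(-q^{2})^{\chi}(1+2\chi q)\pmod 4$. The correction term $M(-q^{2})^{\chi}\cdot 2\chi q$ is odd in $q$, so extracting the coefficient of $q^n$ for even $n$ gives $\alpha_n^{(2\chi)}\equiv [q^n]M(-q^{2})^{\chi}=(-1)^{n/2}\alpha_{n/2}\pmod 4$, as required.

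The main obstacle will be proving the bootstrap identity itself, which splits into the two congruences $[q^{2k}]M(q)^{2}\equiv (-1)^{k}\alpha_k\pmod 4$ and $[q^{2k+1}]M(q)^{2}\equiv 2(-1)^{k}\alpha_k\pmod 4$---nontrivial statements about plane-partition convolutions. One plausible route is to exploit the product factorization $M(q)M(-q)=\prod_{j\mbox{ odd}}(1-q^{2j})^{-5j}\prod_{j\mbox{ even}}(1-q^{2j})^{-4j}$ (a series purely in $q^{2}$) together with $M(q)^{2}+M(-q)^{2}=2M(q)M(-q)+(M(q)-M(-q))^{2}$, and compare with $M(-q^{2})$ via a careful $2$-adic analysis; a backup is a direct induction on $n$ using the Euler-like recursion $n\alpha_n=\chi\sum_{k=1}^{n}(\sum_{r\mid k}r^{2})\alpha_{n-k}$ arising from $N=q\partial_q\log M$.
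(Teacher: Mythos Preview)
Your reduction is correct and clean: the closed form
$\DT(2,n)=\sum_{0\le j<n/2}\tfrac{2j-n+1}{2}\alpha_j\alpha_{n-j}+[n\text{ even}]\tfrac14\alpha_{n/2}^2$
does follow from Theorem~\ref{thm:formDT2} by the swap-and-complement manipulation you sketch (using $[q^k]M^\chi N=(k/\chi)\alpha_k$), the odd-$n$ case is then immediate, and for even $n$ the problem does reduce exactly to the congruence $\alpha_n^{(2\chi)}\equiv(-1)^{n/2}\alpha_{n/2}\pmod 4$, which in turn would follow from your bootstrap identity $M(q)^2\equiv M(-q^2)(1+2q)\pmod 4$. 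This is a genuinely different route from the paper: the author explicitly remarks that integrality ``seems not obvious from the explicit formula~(\ref{DT2})'' and instead gives a \emph{geometric} proof, stratifying the moduli stack $\mM^{(2,2m)}(Z_+)$ according to the Jordan--H\"older type of the underlying semistable object and computing $\chi(\epsilon_i,1)$ on each stratum via the Hall-algebra relations (Lemmas~\ref{lem:delta0}--\ref{lem:e4}); the half-integers that appear are then matched against the $\tfrac14\DT(1,n/2)$ correction using a parity argument on $\dim\Ext^1(E_p,E_p)$ borrowed from Behrend--Fantechi.

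The gap in your proposal is precisely the bootstrap identity. You are honest that this is the ``main obstacle,'' but neither of your suggested attacks is close to a proof: the $M(q)M(-q)$ factorization you write down is correct but does not by itself control $M(q)^2-M(-q^2)$ modulo $4$ (a factor-by-factor comparison already fails, since $(1-q)^{-2}\not\equiv(1+q^2)^{-1}(1+2q)\pmod 4$), and the Euler-type recursion $n\alpha_n=\chi\sum_k(\sum_{r\mid k}r^2)\alpha_{n-k}$ gives no obvious inductive handle on the $2$-adic valuation of $\alpha_n^{(2\chi)}-(-1)^{n/2}\alpha_{n/2}$. So while your combinatorial reformulation is elegant and would yield a proof independent of moduli-theoretic input, as it stands the hard step is only conjectured; the paper's geometric argument avoids this congruence entirely by working directly with the stack decomposition.
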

It seems that Theorem~\ref{thm:intconj} is not 
obvious from the explicit formula (\ref{DT2}). 
Instead of using (\ref{DT2}), 
we give a geometric proof of Theorem~\ref{thm:intconj}
using the definition of $\DT(2, n)$. 

Let $Q^{(2, n)}\subset \Quot^{(n)}(\oO_X^{\oplus 2})$
be a $\GL(2, \mathbb{C})$-invariant 
Zariski open subset constructed in 
Lemma~\ref{Qrn}.
By Lemma~\ref{Qrn}, there is a smooth morphism
\begin{align*}
f\colon Q^{(2, n)} \to \oO bj^{(2, n)}(\aA_X).
\end{align*}
For $p\in Q^{(2, n)}$, we denote by 
$E_p \in \aA_X$ the object corresponding to $f(p)\in 
\oO bj^{(2, n)}(\aA_X)$. 

By the definition of $\DT(2, n)$, 
it is obvious that $\Omega(2, n)\in \mathbb{Z}$
when $n$ is odd. Therefore 
in what follows we set $n=2m$ for 
$m\in \mathbb{Z}$. 
We take a $\GL(2, \mathbb{C})$-invariant 
stratification of $Q^{(2, 2m)}$, 
\begin{align*}
Q^{(2, 2m)}=Q^{(2, 2m)}_{0} \coprod Q^{(2, 2m)}_{1} \coprod 
Q^{(2, 2m)}_{2} \coprod Q^{(2, 2m)}_{3} \coprod 
Q^{(2, 2m)}_{4}, 
\end{align*}
as follows. 
\begin{itemize}
\item $Q^{(2, 2m)}_{0}$ corresponds to 
$p\in Q^{(2, 2m)}$ 
such that $E_p \in \aA_X$ is $Z_{+}$-stable. 
\item $Q^{(2, 2m)}_{1}$ 
corresponds to 
$p\in Q^{(2, 2m)}$ 
such that $E_p \in \aA_X$
 fits into a non-split exact sequence 
\begin{align}\label{E_12}
0 \lr E_1 \lr E_p \lr E_2 \lr 0, 
\end{align}
for $Z_{+}$-stable $E_i \in \aA_X$ 
with $\cl(E_i)=(1, m)$ and $E_1$ is not isomorphic 
to $E_2$. 
\item $Q^{(2, 2m)}_{2}$ corresponds to 
$p\in Q^{(2, 2m)}$ 
such that $E_p \in \aA_X$
is isomorphic to $E_1 \oplus E_2$ 
for $Z_{+}$-stable $E_i \in \aA_X$ 
with $\cl(E_i)=(1, m)$ and $E_1$ is not isomorphic to $E_2$. 
\item $Q^{(2, 2m)}_{3}$ 
corresponds to 
$p\in Q^{(2, 2m)}$ 
such that $E_p \in \aA_X$
fits into a non-split exact sequence 
(\ref{E_12}) such that $E_1 \cong E_2$. 
\item $Q^{(2, 2m)}_{4}$ 
corresponds to 
$p\in Q^{(2, 2m)}$ 
such that $E_p \in \aA_X$
is isomorphic to $E_1^{\oplus 2}$ 
for a $Z_{+}$-stable $E_1 \in \aA_X$. 
\end{itemize}
Then we can write $\delta^{(2, 2m)}(Z_{+}) \in \hH(\aA_X)$ 
as
\begin{align*}
\delta^{(2, 2m)}(Z_{+})=\sum_{i=0}^{4}
\delta_{i}, 
\end{align*}
where $\delta_{i}$ is 
\begin{align}\notag
\delta_i&=\left[\left[\frac{Q^{(2, 2m)}_{i}}{\GL(2, \mathbb{C})}\right]
\to \oO bj(\aA_X)\right] \\
\label{relation}
&=\frac{1}{2}\left[\left[\frac{Q^{(2, 2m)}_{i}}{\mathbb{G}_m^{2}}\right]
\to \oO bj(\aA_X)\right]
-\frac{3}{4}\left[\left[\frac{Q^{(2, 2m)}_{i}}{\mathbb{G}_m}\right]
\to \oO bj(\aA_X)\right].
\end{align}
Here we have used the relation (\ref{relHall}) 
and Example~\ref{exam:F}. 
\begin{lem}
The element $\delta^{(1, m)}(Z_{+})\ast \delta^{(1, m)}(Z_{+})\in 
\hH(\aA_X)$
is written as 
\begin{align}\label{decomp:delta}
\delta^{(1, m)}(Z_{+})\ast \delta^{(1, m)}(Z_{+})
=\sum_{i=1}^{4}\widetilde{\delta}_{i}
\end{align}
where each $\widetilde{\delta}_i$
is as follows. 
\begin{align}\label{delta1}
\widetilde{\delta}_{1}&=
\int_{(p_1, p_2) \in Q^{(1, m)}\times Q^{(1, m)}\setminus D}
\left[ \left[\frac{\mathbb{P}(\Ext^1(E_{p_2}, E_{p_1}))}{\mathbb{G}_m}
\right]
\to \oO bj(\aA_X) \right] d\mu, \\
\label{delta2}
\widetilde{\delta}_{2}&=
\left[\left[\frac{(Q^{(1, m)}\times Q^{(1, m)})\setminus D}
{\mathbb{G}_m^{2}} \right] \to \oO 
bj (\aA_X)\right], \\
\label{delta3}
\widetilde{\delta}_{3}&=
\int_{p\in Q^{(1, m)}}
\left[ \left[\frac{\mathbb{P}(\Ext^1(E_p, E_p))}{\mathbb{A}^1 \times 
\mathbb{G}_m}\right]
\to \oO bj(\aA_X) \right] d\mu, \\
\label{delta4}
\widetilde{\delta}_{4}&=
\left[\left[\frac{Q^{(1, m)}}
{\mathbb{A}^1 \rtimes \mathbb{G}_m^{2}} \right] \to \oO 
bj (\aA_X)\right].
\end{align}
Here $D \subset Q^{(1, m)} \times Q^{(1, m)}$ is 
a diagonal, the algebraic groups in the denominators 
act on the varieties in the numerators trivially. 
The measure 
$\mu$ for the integrations (\ref{delta1}), (\ref{delta3}) 
sends constructible sets on 
$Q^{(1, m)}\times Q^{(1, m)}$ or $Q^{(1, m)}$
to the 
associated elements of the Grothendieck group of 
varieties. 
\end{lem}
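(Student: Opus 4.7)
The plan is to unwind the Hall algebra $\ast$-product and then stratify the resulting stack by the isomorphism type of the extension data. By the construction of the product, $\delta^{(1,m)}(Z_+) \ast \delta^{(1,m)}(Z_+)$ is the class of the stack $\zZ \to \oO bj(\aA_X)$ whose points are short exact sequences $0 \to E_1 \to E \to E_2 \to 0$ with each $E_i \in \mM^{(1,m)}(Z_+)$, so the task reduces to partitioning $\zZ$ along the four cases listed in the lemma (split versus non-split, and $E_1 \cong E_2$ versus $E_1 \not\cong E_2$) and identifying each stratum with the stated quotient stack.

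Two preliminary observations drive the whole computation. First, any $Z_+$-semistable object of class $(1,m)$ is in fact stable with $\End = \mathbb{C}$, because by Proposition~\ref{prop:char} it is of the form $(\oO_X \twoheadrightarrow F)$ whose only proper subobjects are $F'[-1]$ for $F' \subset F$, and $\arg Z_+(F'[-1]) = \arg Z_+(0,1) < \arg Z_+(1,m)$. In particular $Q^{(1,m)}$ is identified with $\Hilb^m(X)$, the action of $\GL(1,\mathbb{C}) = \mathbb{G}_m$ on $Q^{(1,m)}$ is trivial, and the diagonal $D$ appearing in (\ref{delta1})--(\ref{delta2}) is literally the set-theoretic diagonal. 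Second, for such stable $E_1, E_2$ the group $\Aut(E_1) \times \Aut(E_2) = \mathbb{G}_m \times \mathbb{G}_m$ acts on $\Ext^1(E_2, E_1)$ by the character $(a, c) \mapsto a c^{-1}$; the diagonal acts trivially, while the quotient $\mathbb{G}_m$ acts by scaling on the $\Ext^1$ fibre.

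For each stratum I then read off the quotient stack matching (\ref{delta1})--(\ref{delta4}) by computing the stabilizer of a point of $\zZ$ in that stratum. On $\widetilde{\zZ}_1$, the scaling $\mathbb{G}_m$ turns the punctured $\Ext^1$-bundle over $Q^{(1,m)} \times Q^{(1,m)} \setminus D$ into the projectivization $\mathbb{P}(\Ext^1(E_{p_2}, E_{p_1}))$, while the diagonal $\mathbb{G}_m$ contributes the $B\mathbb{G}_m$ inside the brackets of (\ref{delta1}). On $\widetilde{\zZ}_2$ the split sequence is canonical and the stabilizer is the full $\mathbb{G}_m^2$, giving (\ref{delta2}). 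On $\widetilde{\zZ}_3$ one computes $\End(E) \cong \mathbb{C}[\epsilon]/(\epsilon^2)$ for any non-split self-extension, whose unit group is the abelian group $\mathbb{A}^1 \times \mathbb{G}_m$ (acting trivially on the line in $\mathbb{P}(\Ext^1(E_p, E_p))$), yielding (\ref{delta3}). Finally, on $\widetilde{\zZ}_4$ the split self-extension satisfies $\End(E) \cong M_2(\mathbb{C})$, whose subgroup of filtration-preserving automorphisms is the upper-triangular Borel $\mathbb{A}^1 \rtimes \mathbb{G}_m^2 \subset \GL(2, \mathbb{C})$, yielding (\ref{delta4}).

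The step that requires the most care is the bookkeeping of the two $\mathbb{G}_m$-actions in the non-split strata $\widetilde{\zZ}_1$ and $\widetilde{\zZ}_3$: one must verify that after passing to $\mathbb{P}(\Ext^1)$, the residual automorphisms that continue to act trivially on the whole exact sequence account for precisely the $\mathbb{G}_m$ and the $\mathbb{A}^1 \times \mathbb{G}_m$ appearing in the denominators of (\ref{delta1}) and (\ref{delta3}). Once these identifications are made, the four strata exhaust $\zZ$ disjointly, and summing the resulting Hall algebra classes gives (\ref{decomp:delta}).
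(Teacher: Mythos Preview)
Your proof is correct and follows essentially the same strategy as the paper: both arguments compute the Hall product fibrewise over the base $\mM^{(1,m)}(Z_+) \times \mM^{(1,m)}(Z_+)$, split each fibre according to whether the extension class is zero or not, and then separate the diagonal from the off-diagonal in the base. The paper phrases this as computing the pointwise product $f_1 \ast f_2 = [\Ext^1(E_2,E_1)/(\Hom(E_2,E_1)\rtimes\mathbb{G}_m^2)]$ and then integrating over $(Q^{(1,m)}\times Q^{(1,m)})\setminus D$ and over $D$, whereas you stratify the total stack $\zZ$ directly and identify the automorphism group of a point in each stratum; for instance, your computation $\End(E)\cong\mathbb{C}[\epsilon]/(\epsilon^2)$ on $\widetilde{\zZ}_3$ recovers exactly the paper's $\Hom(E_p,E_p)\times\mathbb{G}_m$ stabilizer.
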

\begin{proof}
Recall that $\delta^{(1, m)}(Z_{+})\ast \delta^{(1, m)}(Z_{+})$
is defined by taking the fiber product of the following diagram, 
\begin{align}\label{fiber}
\xymatrix{
& \eE x(\aA_{X}) \ar[d]^{(p_1, p_3)} \\
[Q^{(1, m)}/\mathbb{G}_m] \times [Q^{(1, m)}/\mathbb{G}_m]
\ar[r] & \oO bj(\aA_X) \times \oO bj(\aA_X).}
\end{align}
Here $\mathbb{G}_m$ acts on $Q^{(1, m)}$ trivially. 
Take $\mathbb{C}$-valued points 
$\rho_i \colon \Spec \mathbb{C} \to Q^{(1, m)}$ for $i=1, 2$,
which corresponds to $E_i \in \aA_X$.  
We have the associated elements in the Hall-algebra, 
\begin{align*}
f_i=\left[[\Spec \mathbb{C}/\mathbb{G}_m] \stackrel{\rho_i}{\to}
[Q^{(1, m)}/\mathbb{G}_m] \to \oO bj(\aA_X) \right].
\end{align*}
Then $f_1 \ast f_2$ is as follows, 
\begin{align}\label{f1astf2}
f_{1}\ast f_2 =\left[ \left[ \frac{\Ext^1(E_2, E_1)}{\Hom(E_2, E_1)
\rtimes \mathbb{G}_m^2} \right] 
\to \oO bj(\aA_X)\right].
\end{align}
Here $(t_1, t_2)\in \mathbb{G}_m^2$ acts on $\Ext^1(E_2, E_1)$
and $\Hom(E_2, E_1)$ via multiplying $t_1 t_2^{-1}$, and 
$\Hom(E_2, E_1)$ acts on $\Ext^1(E_2, E_1)$ trivially. 
For $u\in \Ext^2(E_2, E_1)$, 
 the stabilizer group of the $\mathbb{G}_m^2$-action 
 on $\Ext^1(E_2, E_1)$ at $u$ is $\mathbb{G}_m^2$ 
 if $u=0$ and the diagonal subgroup $\mathbb{G}_m \subset \mathbb{G}_m^2$
 if $u\neq 0$. Therefore we have 
 \begin{align}\notag
f_{1}\ast f_2 =&\left[ 
\left[ \frac{\mathbb{P}(\Ext^1(E_2, E_1))}{\Hom(E_2, E_1)
\times \mathbb{G}_m} \right] 
\to \oO bj(\aA_X)\right]+ \\ \label{fiber:decom}
&\qquad \qquad \qquad 
\left[ \left[\frac{\Spec \mathbb{C}}{\Hom(E_2, E_1)
\rtimes \mathbb{G}_m^2} \right] 
\to \oO bj(\aA_X)\right]
\end{align}
Here the algebraic groups in the denominators act trivially on 
the varieties in the numerators. 
Since $E_i \in \aA_X$ are $Z_{+}$-stable, we have 
\begin{align*}
\Hom(E_1, E_2)=\left\{\begin{array}{cc}
\mathbb{A}^1, & \mbox{ if }\rho_1=\rho_2, \\
\Spec \mathbb{C}, & \mbox{ if } \rho_1 \neq \rho_2. 
\end{array} \right. 
\end{align*}
Taking the integration of (\ref{fiber:decom}) 
over points on $(Q^{(1, m)}\times Q^{(1, m)})\setminus D$
and $D\cong Q^{(1, m)}$, 
we obtain the decomposition (\ref{decomp:delta}). 
\end{proof}
\begin{lem}\label{lem:delta0}
The element $\delta_{0} \in \hH(\aA_X)$ is written as (\ref{Ui2}) 
such that $\chi(\delta_{0}, 1)\in \mathbb{Z}$. 
\end{lem}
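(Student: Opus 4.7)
The plan is to apply relation (\ref{relation}) to rewrite $\delta_0$ explicitly in the form (\ref{Ui2}), and then evaluate $\chi(\delta_0,1)$ via multiplicativity of the Euler characteristic on smooth fibrations. Since every $E_p$ with $p\in Q^{(2,2m)}_0$ is $Z_{+}$-stable, we have $\End(E_p)=\mathbb{C}$, so the stabilizer of $p$ under the $\GL(2,\mathbb{C})$-action is exactly the central scalar subgroup $\mathbb{G}_m^{\mathrm{sc}}\subset \GL(2,\mathbb{C})$ of (\ref{diag}). Hence $\PGL(2,\mathbb{C})$ acts freely on $Q^{(2,2m)}_0$, and I will form the geometric quotient $M_0\cneq Q^{(2,2m)}_0/\PGL(2,\mathbb{C})$ as an algebraic space of finite type by standard descent for free actions of reductive groups on quasi-projective varieties.

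Next I will split $T^G\cong \mathbb{G}_m^{\mathrm{sc}}\times S$ for a subtorus $S\cong \mathbb{G}_m$ (say $S=\{(t,1)\}$), so that $S$ acts freely on $Q^{(2,2m)}_0$ with quotient $N_0\cneq Q^{(2,2m)}_0/S$. Since $\mathbb{G}_m^{\mathrm{sc}}$ acts trivially on $Q^{(2,2m)}_0$, there are isomorphisms of stacks
\[
[Q^{(2,2m)}_0/\mathbb{G}_m^{\mathrm{sc}}]\cong Q^{(2,2m)}_0\times [\Spec\mathbb{C}/\mathbb{G}_m],\quad [Q^{(2,2m)}_0/T^G]\cong N_0\times [\Spec\mathbb{C}/\mathbb{G}_m].
\]
Inserting these into (\ref{relation}) puts $\delta_0$ in the form (\ref{Ui2}) and yields $\chi(\delta_0,1)=\tfrac{1}{2}\chi(N_0)-\tfrac{3}{4}\chi(Q^{(2,2m)}_0)$.

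To see that this is an integer, I will use that $Q^{(2,2m)}_0\to M_0$ is a smooth principal $\PGL(2,\mathbb{C})$-bundle and $N_0\to M_0$ is the associated smooth fibration with fiber $\PGL(2,\mathbb{C})/T_{\mathrm{PGL}}$, where $T_{\mathrm{PGL}}=T^G/\mathbb{G}_m^{\mathrm{sc}}$ is a maximal torus of $\PGL(2,\mathbb{C})$. By multiplicativity of $\chi$ on smooth fibrations,
\[
\chi(Q^{(2,2m)}_0)=\chi(M_0)\chi(\PGL(2,\mathbb{C})),\qquad \chi(N_0)=\chi(M_0)\chi(\PGL(2,\mathbb{C})/T_{\mathrm{PGL}}).
\]
A Bruhat decomposition gives $\chi(\PGL(2,\mathbb{C}))=0$, and identifying $\PGL(2,\mathbb{C})/T_{\mathrm{PGL}}$ with the complement of the diagonal in $\mathbb{P}^1\times\mathbb{P}^1$ gives $\chi(\PGL(2,\mathbb{C})/T_{\mathrm{PGL}})=2$. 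Combining, $\chi(\delta_0,1)=\chi(M_0)\in \mathbb{Z}$.

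The main technical obstacle will be the careful verification that the quotients $M_0$ and $N_0$ exist as algebraic spaces with the expected universal properties and that Euler characteristic multiplicativity applies to the relevant smooth fibrations; both follow from the freeness of the group actions together with standard descent and GIT arguments, and the situation is essentially rigid because the stabilizer is uniformly the central $\mathbb{G}_m$.
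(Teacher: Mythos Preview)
Your argument is correct and reaches the same conclusion $\chi(\delta_0,1)=\chi(M_0)\in\mathbb{Z}$, but it takes an unnecessarily indirect route compared with the paper. The paper never invokes relation~(\ref{relation}) for $\delta_0$ at all: since the central $\mathbb{G}_m\subset\GL(2,\mathbb{C})$ acts trivially and $\PGL(2,\mathbb{C})$ acts freely on $Q^{(2,2m)}_0$, one has directly an isomorphism of stacks
\[
\bigl[Q^{(2,2m)}_0/\GL(2,\mathbb{C})\bigr]\ \cong\ \bigl[M_0/\mathbb{G}_m\bigr]\ \cong\ M_0\times[\Spec\mathbb{C}/\mathbb{G}_m],
\]
with $M_0=Q^{(2,2m)}_0/\PGL(2,\mathbb{C})$; stratifying the algebraic space $M_0$ into quasi-projective pieces then exhibits $\delta_0$ in the form~(\ref{Ui2}) with all $c_i\in\mathbb{Z}$, so $\chi(\delta_0,1)=\chi(M_0)$ is immediate. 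By contrast, you first expand $\delta_0$ via~(\ref{relation}), introduce the auxiliary quotient $N_0$, and then must compute $\chi(\PGL(2,\mathbb{C}))=0$ and $\chi(\PGL(2,\mathbb{C})/T_{\PGL})=2$ together with multiplicativity of $\chi$ along the fibrations $Q^{(2,2m)}_0\to M_0$ and $N_0\to M_0$ to recover the same answer. Your method is essentially the machinery the paper reserves for the genuinely harder strata $\epsilon_2,\epsilon_3$ (cf.\ Lemmas~\ref{lem:delta2} and~\ref{lem:e3}), where the stabilizers are larger than $\mathbb{G}_m$ and no such one-line identification is available; for $\delta_0$ it is overkill, though not wrong.
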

\begin{proof}
For a point $p \in Q_{0}^{(2, 2m)}$, the 
object $E_p \in \aA_X$
satisfies $\Aut(E_p)=\mathbb{G}_m$ since $E_p$ is $Z_{+}$-stable. 
Hence the diagonal subgroup $\mathbb{G}_m \subset \GL(2, \mathbb{C})$
acts on $Q_{0}^{(2, 2m)}$ trivially, 
and the quotient group $\GL(2, \mathbb{C})/\mathbb{G}_m 
=\PGL(2, \mathbb{C})$
acts freely on $Q_{0}^{(2, 2m)}$. 
Hence $\delta_0$ is written as $[[M/\mathbb{G}_m] \to \oO bj(\aA_X)]$
for an algebraic space $M=Q_{0}^{(2, 2m)}/\PGL(2, \mathbb{C})$, 
and $\mathbb{G}_m$ acts on $M$ trivially.  
Since any algebraic space is written as a disjoint union of 
quasi-projective varieties, $\delta_{0}$ is written as 
(\ref{Ui2}) with each $c_i \in \mathbb{Z}$. Therefore 
$\chi(\delta_{0}, 1)\in \mathbb{Z}$ follows. 
\end{proof}
For $1\le i\le 4$, we set $\epsilon_{i}\in \hH(\aA_X)$
as follows, 
\begin{align*}
\epsilon_i=\delta_i-\frac{1}{2}\widetilde{\delta}_i.
\end{align*}
\begin{lem}\label{lem:e1}
The element $\epsilon_{1}\in \hH(\aA_X)$ is written as 
(\ref{Ui2}) such that $\chi(\epsilon_{1}, 1)\in \mathbb{Z}$. 
\end{lem}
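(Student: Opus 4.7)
My plan is to prove the identification $\widetilde{\delta}_1=\delta_1$ in $\hH(\aA_X)$, so that $\epsilon_1=\tfrac{1}{2}\delta_1$, and then extract the divisibility $\chi(\delta_1,1)\in 2\mathbb{Z}$ from a free swap involution on $(Q^{(1,m)})^{\times 2}\setminus D$ combined with Serre duality on the Calabi--Yau 3-fold $X$.

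For the first step, $\widetilde{\delta}_1$ parametrizes triples $(p_1,p_2,[\xi])$ with $p_1\neq p_2$ in $Q^{(1,m)}$ and $[\xi]\in\mathbb{P}(\Ext^1(E_{p_2},E_{p_1}))$, carrying a trivial $B\mathbb{G}_m$-gerbe, and its map to $\oO bj(\aA_X)$ sends such a triple to the middle term $E_\xi$ of $0\to E_{p_1}\to E_\xi\to E_{p_2}\to 0$. The key observation is that for any $E$ in the stratum $Q^{(2,2m)}_1$ this data is canonically recovered: applying $\Hom(E_{p_2},-)$ to the extension, the coboundary sends $\mathrm{id}\in\Hom(E_{p_2},E_{p_2})$ to $[\xi]\neq 0$, whence $\Hom(E_{p_2},E)=0$ and $E$ admits $E_{p_1}$ as its unique stable $(1,m)$-subobject. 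Stabilizers also match ($\Aut(E)=\mathbb{G}_m$ on both sides), yielding an isomorphism of stacks $\widetilde{\delta}_1\cong\delta_1$. Stratifying the algebraic space $M_1:=Q^{(2,2m)}_1/\PGL(2,\mathbb{C})$ by quasi-projective varieties places both $\delta_1$ and $\widetilde{\delta}_1$ in the form~(\ref{Ui2}) with integer coefficients, so $\epsilon_1=\tfrac{1}{2}\delta_1$ is also of that form and $\chi(\epsilon_1,1)=\tfrac{1}{2}\chi(M_1)$.

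It remains to show $\chi(M_1)\in 2\mathbb{Z}$. Under the bijection above, $M_1$ is the total space of the projective bundle $\mathbb{P}(\eE xt^1(\eE_2,\eE_1))$ over $U:=Q^{(1,m)}\times Q^{(1,m)}\setminus D$, on which the free swap involution $\tau$ satisfies $\chi(U)=2\chi(U/\tau)$. For $Z_{+}$-stable $E_{p_1}\not\cong E_{p_2}$ of class $(1,m)$ on the CY3 $X$, stability and Serre duality give $\Hom(E_{p_i},E_{p_j})=\Ext^3(E_{p_i},E_{p_j})=0$, and the vanishing Euler pairing $\chi((1,m),(1,m))=0$ forces $\dim\Ext^1=\dim\Ext^2$; combined with Serre duality $\Ext^1(E_{p_2},E_{p_1})\cong\Ext^2(E_{p_1},E_{p_2})^{*}$, this yields the $\tau$-invariance of the fibre dimension $f(p_1,p_2):=\dim\Ext^1(E_{p_2},E_{p_1})$. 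Since each level set of $f$ on $U$ is $\tau$-stable and $\tau$ acts freely, $\chi(\{f=k\})\in 2\mathbb{Z}$ for every $k$, whence $\chi(M_1)=\sum_k k\,\chi(\{f=k\})\in 2\mathbb{Z}$. The main obstacle I anticipate is verifying the first-step identification $\widetilde{\delta}_1=\delta_1$ in families, i.e.\ matching the $\PGL(2,\mathbb{C})$-torsor structure on $Q^{(2,2m)}_1$ with the projectivized Ext-bundle over $U$; once this is in place, the Serre-duality symmetry delivers the integrality essentially for free.
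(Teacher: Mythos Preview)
Your approach is essentially the paper's: both identify the fibres of the map $\gamma\colon Q^{(2,2m)}_1/\PGL(2,\mathbb{C})\to U:=(Q^{(1,m)})^{\times 2}\setminus D$ with $\mathbb{P}(\Ext^1(E_{p_2},E_{p_1}))$ via your uniqueness-of-subobject argument, and both deduce integrality from the swap-invariance of $\dim\Ext^1(E_{p_2},E_{p_1})$, which the paper extracts directly from~(\ref{RRSe}) together with $\Hom(E_{p_1},E_{p_2})=\Hom(E_{p_2},E_{p_1})=0$. The one difference is that you assert the Hall-algebra identity $\widetilde\delta_1=\delta_1$, whereas the paper only claims (and only needs) the numerical equality $\chi(\delta_1,1)=\chi(\widetilde\delta_1,1)$; this weaker statement already follows from the pointwise fibre identification after stratifying $U$ by $\dim\Ext^1$, so the family-level verification you flag as the main obstacle can be bypassed entirely.
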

\begin{proof}
For $p\in Q_{1}^{(2, 2m)}$, 
it is easy to see that
the 
object $E_p\in \aA_X$
satisfies $\Aut(E_p)=\mathbb{G}_m$ 
by using the exact sequence (\ref{E_12}). 
Hence $\PGL(2, \mathbb{C})$ 
acts freely on $Q_{1}^{(2, 2m)}$ as in the proof of 
Lemma~\ref{lem:delta0}, and  
the quotient space $Q_{1}^{(2, 2m)}/\PGL(2, \mathbb{C})$ is an algebraic 
space over $\mathbb{C}$. 
Also it is easy to see that the objects $E_i \in \aA_X$
which appear in (\ref{E_12}) are uniquely determined up to 
isomorphisms for a given $p\in Q_{1}^{(2, 2m)}$. 
Hence there is a map of algebraic spaces, 
\begin{align*}
\gamma 
 \colon Q^{(2, 2m)}_{1}/\PGL(2, \mathbb{C}) \to (Q^{(1, m)}\times Q^{(1, m)})
\setminus D,
\end{align*}
such that if $\gamma(p)=(p_1, p_2)$, there is an 
exact sequence in $\aA_X$, 
\begin{align}\label{Ep12}
0 \lr E_{p_1} \lr E_{p} \lr E_{p_2} \lr 0.
\end{align}
By the construction, 
closed points of the fiber of $\gamma$ 
at $(p_1, p_2)$ bijectively correspond to isomorphism classes of objects 
$E_{p}\in \aA_X$ which fit into an exact sequence (\ref{Ep12}), 
which also bijectively correspond to closed points in 
$\mathbb{P}(\Ext^1(E_{p_{2}}, E_{p_{1}}))$. 
Therefore we have 
\begin{align}\notag
\chi(\epsilon_{1}, 1)&=\int_{(p_1, p_2)\in (Q^{(1, m)}\times Q^{(1, m)})
\setminus D}\chi(\mathbb{P}(\Ext^1(E_{p_{2}}, E_{p_{1}}))) d\chi \\ \notag
& \qquad \qquad \quad 
-\frac{1}{2}\int_{(p_1, p_2)\in (Q^{(1, m)}\times Q^{(1, m)})
\setminus D}\chi(\mathbb{P}(\Ext^1(E_{p_{2}}, E_{p_{1}}))) d\chi, \\
\notag
&=\frac{1}{2}\int_{(p_1, p_2)\in (Q^{(1, m)}\times Q^{(1, m)})
\setminus D}\dim \Ext^1(E_{p_{2}}, E_{p_{1}}) d\chi \\
\label{intEx}
&=\int_{(p_1, p_2)\in \Sym^2(Q^{(1, m)})
\setminus D}\dim \Ext^1(E_{p_{2}}, E_{p_{1}}) d\chi \in \mathbb{Z}.
\end{align}
In (\ref{intEx}), we have used the fact that 
$$\dim \Ext^1(E_{p_{2}}, E_{p_{1}})=\dim 
\Ext^1(E_{p_{1}}, E_{p_{2}}), $$
for $(p_1, p_2)\in (Q^{(1, m)}\times Q^{(1, m)})
\setminus D$, which follows from the formula (\ref{RRSe}) 
and 
$$\Hom(E_{p_1}, E_{p_2})=\Hom(E_{p_2}, E_{p_1})=0.$$ 
\end{proof}
\begin{lem}
\label{lem:delta2}
The element $\epsilon_{2} \in \hH(\aA_X)$ is written as (\ref{Ui2}) 
such that $\chi(\epsilon_{2}, 1)=0$.  
\end{lem}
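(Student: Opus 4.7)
The plan is to expand $\delta_2$ using the Hall-algebra relation (\ref{relHall}) for the very special group $\GL(2,\mathbb{C})$, then identify the geometric locus in $Q_2^{(2,2m)}$ on which the standard torus $\mathbb{G}_m^2\subset\GL(2,\mathbb{C})$ acts with its maximal possible stabilizer, and finally show that this locus accounts exactly for $\widetilde{\delta}_2$, leaving $\epsilon_2$ in a form amenable to a direct Euler characteristic computation.

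First, using the coefficients $F(G,T^G,T^G)=\tfrac{1}{2}$ and $F(G,T^G,\mathbb{G}_m)=-\tfrac{3}{4}$ from Example~\ref{exam:F}, the relation (\ref{relHall}) yields
\begin{align*}
\delta_2=\frac{1}{2}\left[[Q_2^{(2,2m)}/\mathbb{G}_m^2]\to\oO bj(\aA_X)\right]-\frac{3}{4}\left[[Q_2^{(2,2m)}/\mathbb{G}_m]\to\oO bj(\aA_X)\right],
\end{align*}
where $\mathbb{G}_m^2$ is the standard diagonal torus and $\mathbb{G}_m$ the centre. Every $p\in Q_2^{(2,2m)}$ has $\Aut(E_p)\cong\mathbb{G}_m^2$ containing the centre of $\GL(2,\mathbb{C})$, so the central $\mathbb{G}_m$ acts trivially on $Q_2^{(2,2m)}$ and the second term is already in the form (\ref{Ui2}). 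For the first term I would stratify $Q_2^{(2,2m)}$ by the stabilizer of the standard $\mathbb{G}_m^2$-action. Since $\Aut(E_p)$ sits in $\GL(2,\mathbb{C})$ as a maximal torus varying with $p$, and two distinct maximal tori of $\GL(2,\mathbb{C})$ intersect only in the centre, the $\mathbb{G}_m^2$-fixed locus coincides with the image of the natural embedding
\begin{align*}
\Phi\colon W\cneq(Q^{(1,m)}\times Q^{(1,m)})\setminus D\hookrightarrow Q_2^{(2,2m)},\quad (p_1,p_2)\mapsto[\oO_X\oplus\oO_X\twoheadrightarrow F_1\oplus F_2],
\end{align*}
while on the complement $Q_2^{(2,2m)}\setminus\Phi(W)$ the induced $\mathbb{G}_m$-action (via $\mathbb{G}_m^2$ modulo the centre) is free, producing an algebraic space $\yY\cneq(Q_2^{(2,2m)}\setminus\Phi(W))/\mathbb{G}_m$.

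This stratification then produces
\begin{align*}
[Q_2^{(2,2m)}/\mathbb{G}_m^2\to\oO bj(\aA_X)]=[W\times B\mathbb{G}_m^2\to\oO bj(\aA_X)]+[\yY\times B\mathbb{G}_m\to\oO bj(\aA_X)],
\end{align*}
and the first term on the right is exactly $\widetilde{\delta}_2=[W/\mathbb{G}_m^2\to\oO bj(\aA_X)]$ (the $\mathbb{G}_m^2$-action on $W$ is trivial). Subtracting $\frac{1}{2}\widetilde{\delta}_2$ therefore gives
\begin{align*}
\epsilon_2=\frac{1}{2}[\yY\times B\mathbb{G}_m\to\oO bj(\aA_X)]-\frac{3}{4}[Q_2^{(2,2m)}\times B\mathbb{G}_m\to\oO bj(\aA_X)],
\end{align*}
which is manifestly in the form (\ref{Ui2}).

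To finish, I would compute $\chi(\epsilon_2,1)$ using the $\GL(2,\mathbb{C})$-quotient map $Q_2^{(2,2m)}\to\Sym^2(Q^{(1,m)})\setminus D$, whose fiber is the homogeneous space $\GL(2,\mathbb{C})/\mathbb{G}_m^2\cong(\mathbb{P}^1)^2\setminus\Delta$ of Euler characteristic $2$; this yields $\chi(Q_2^{(2,2m)})=2\chi(\Sym^2(Q^{(1,m)})\setminus D)$. The fiber of $\yY$ over the same base is the quotient of $(\mathbb{P}^1)^2\setminus\Delta$ minus the two $\mathbb{G}_m^2$-fixed cosets by the induced free $\mathbb{G}_m$-action, and a short stratification via $\mathbb{P}^1=\mathbb{G}_m\cup\{0,\infty\}$ (invariant $t_1/t_2$ on the open stratum plus four quotient points coming from the coordinate axes) evaluates its Euler characteristic as $-1+4=3$, so $\chi(\yY)=3\chi(\Sym^2(Q^{(1,m)})\setminus D)$. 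Substituting gives $\chi(\epsilon_2,1)=\tfrac{3}{2}\chi(\Sym^2(Q^{(1,m)})\setminus D)-\tfrac{3}{2}\chi(\Sym^2(Q^{(1,m)})\setminus D)=0$. The main delicacy will be verifying that the $\mathbb{G}_m^2$-fixed locus coincides with $\Phi(W)$ and that $\yY$ is a genuine algebraic space; both rest on the observation that $\Aut(E_p)$ embeds in $\GL(2,\mathbb{C})$ as the specific maximal torus preserving the kernel of the surjection representing $p$.
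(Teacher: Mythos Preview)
Your proposal is correct and follows essentially the same route as the paper: expand $\delta_2$ via the relation~(\ref{relHall}), identify the $T^G$-fixed locus in $Q_2^{(2,2m)}$ with the image of the diagonal embedding of $(Q^{(1,m)}\times Q^{(1,m)})\setminus D$, pass to the free $\mathbb{G}_m$-quotient on the complement, and compute Euler characteristics fibrewise over $\Sym^2(Q^{(1,m)})\setminus D$. The only cosmetic difference is that the paper computes the fibre Euler characteristic $3$ via virtual Poincar\'e polynomials (obtaining $P_t=t^2+2$), whereas you identify the fibre explicitly with $(\mathbb{P}^1)^2\setminus\Delta$ minus two points modulo a free $\mathbb{G}_m$-action and stratify directly; both yield the same numbers and the same cancellation $\tfrac{1}{2}\cdot 3-\tfrac{3}{4}\cdot 2=0$.
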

\begin{proof}
Let $T^{G}=\mathbb{G}_m^2 \subset \GL(2, \mathbb{C})$ 
be the subgroup of diagonal matrices, and consider 
the associated $\mathbb{G}_m^2$-action on $Q_{2}^{(2, 2m)}$. 
Since the subgroup $\mathbb{G}_m \subset T^G$
given by (\ref{diag})
acts on $Q_2^{(2, 2m)}$ trivially, 
the quotient group $T^{G}/\mathbb{G}_m \cong \mathbb{G}_m$
acts on $Q_2^{(2, 2m)}$. The set of $T^G/\mathbb{G}_m$-fixed points 
is the image of the map 
\begin{align*}
\iota \colon (Q^{(1, m)} \times Q^{(1, m)})\setminus D 
\to Q_{2}^{(2, 2m)}, 
\end{align*}
defined by 
\begin{align*}
\left((\oO_X \stackrel{s_1}{\twoheadrightarrow}F_1), 
(\oO_X \stackrel{s_2}{\twoheadrightarrow}F_2)\right)
\mapsto (\oO_X^{\oplus 2} \stackrel{(s_1, s_2)}{\twoheadrightarrow}
F_1 \oplus F_2).
\end{align*}
It is easy to see that $\iota$ is an injection, 
and $T^G/\mathbb{G}_m$ acts on 
$Q_{2}^{(2, 2m)}\setminus \Imm \iota$ freely. 
We set $\widetilde{Q}_{2}^{(2, 2m)}$
to be the quotient algebraic space, 
$$\widetilde{Q}_{2}^{(2, 2m)}=
(Q_{2}^{(2, 2m)}\setminus \Imm \iota)/(T^G/\mathbb{G}_m).$$
Noting (\ref{relation}), we obtain that 
\begin{align}\notag
\epsilon_{2}&=\frac{1}{2}\left[\left[\frac{\Imm \iota}{\mathbb{G}_m^2} \right] \to \oO bj (\aA_X)\right]
+\frac{1}{2}\left[\left[\frac{\widetilde{Q}_{2}^{(2, 2m)}}{\mathbb{G}_m}
 \right] \to \oO bj (\aA_X)\right] \\ \notag
&
-\frac{3}{4}\left[\left[\frac{Q_{2}^{(2, 2m)}}{\mathbb{G}_m} \right] \to \oO bj (\aA_X)\right]
-\frac{1}{2}
\left[\left[\frac{(Q^{(1, m)}\times Q^{(1, m)})\setminus D}{\mathbb{G}_m^{2}} 
\right] \to \oO bj(\aA_X)\right] \\
\label{e2written}
&=\frac{1}{2}\left[\left[\frac{\widetilde{Q}_{2}^{(2, 2m)}}{\mathbb{G}_m} \right] \to \oO bj (\aA_X)\right]
-\frac{3}{4}\left[\left[\frac{Q_{2}^{(2, 2m)}}{\mathbb{G}_m} \right] \to \oO bj (\aA_X)\right]. 
\end{align}
Hence $\epsilon_2$ is written as (\ref{Ui2}).
Let us compute the Euler characteristic of $\widetilde{Q}_{2}^{(2, 2m)}$.
For a point $p\in Q^{(2, 2m)}_{2}$ and
the object $E_p\in \aA_X$, take
$(p_1, p_2)\in (Q^{(1, m)}\times Q^{(1, m)})\setminus D$
such that $E_{p}\cong E_{p_1}\oplus E_{p_2}$. 
It is easy to see that 
the pair $(E_1, E_2)$ is uniquely determined 
up to isomorphisms and a permutation. Hence 
$p\mapsto (p_1, p_2)$ 
 defines a well-defined map, 
\begin{align*}
\gamma \colon Q_{2}^{(2, 2m)} \to \Sym^2(Q^{(1, m)})\setminus D. 
\end{align*}
For $(p_1, p_2)\in \Sym^2(Q^{(1, m)})\setminus D$, 
the $\GL(2, \mathbb{C})$-action 
on $Q_{2}^{(2, 2m)}$ induces a map,
$$\GL(2, \mathbb{C}) \twoheadrightarrow \gamma^{-1}(p_1, p_2),$$
which is a $\mathbb{G}_m^2$-bundle over $\gamma^{-1}(p_1, p_2)$. 
Restricting to
 $\gamma^{-1}(p_1, p_2)\setminus \Imm \iota$, 
 we obtain the $\mathbb{G}_m^2$-bundle over
 $\gamma^{-1}(p_1, p_2)\setminus \Imm \iota$, 
\begin{align*}
\GL(2, \mathbb{C})\setminus (T^{G}\cup i(T^{G}))
\twoheadrightarrow \gamma^{-1}(p_1, p_2)\setminus \Imm \iota. 
\end{align*}
Here $i=\left(\begin{array}{cc}0 & 1 \\ 1 & 0 \end{array}\right)
\in \GL(2, \mathbb{C})$. 
Since $\mathbb{G}_m^2$ is a special algebraic group, the
above map is Zariski locally trivial. Hence 
the virtual Poincar\'e polynomial of 
$\gamma^{-1}(p_1, p_2)\setminus \Imm \iota$ is 
\begin{align}\notag
P_t(\gamma^{-1}(p_1, p_2)\setminus \Imm \iota)&=
\frac{P_t(\GL(2, \mathbb{C})\setminus (T^{G}\cup i(T^{G})))}
{P_t(\mathbb{G}_m^2)} \\
\label{virPo}
&=t^4 +t^2 -1.
\end{align}
The free $T^G/\mathbb{G}_m\cong\mathbb{G}_m$-action
 on $Q^{(2, 2m)}\setminus \Imm \iota$
restricts to the free $\mathbb{G}_m$-action
on $\gamma^{-1}(p_1, p_2)\setminus \Imm \iota$.
By (\ref{virPo}), we have 
\begin{align*}
P_t((\gamma^{-1}(p_1, p_2)\setminus \Imm \iota)/\mathbb{G}_m)
&=\frac{t^4+t^2-1}{t^2 -1} \\
&=t^2 +2. 
\end{align*}
By inverting $t=1$, we obtain 
\begin{align}\label{invert}
\chi((\gamma^{-1}(p_1, p_2)\setminus \Imm \iota)/\mathbb{G}_m)=3.
\end{align}
Now the map $\gamma$ descends to a map 
$$\gamma' \colon 
\widetilde{Q}^{(2, 2m)}_{2} \to \Sym^2(Q^{(1, m)})\setminus D,$$
such that the Euler characteristic of each fiber
of $\gamma'$ is $3$
by (\ref{invert}). 
Therefore we obtain 
\begin{align}\notag
\chi(\widetilde{Q}^{(2, 2m)}_{2})
&=3\cdot \chi(\Sym^2(Q^{(1, m)})\setminus D) \\
\label{EuQ}
&=\frac{3}{2}\left(\chi(Q^{(1, m)})^2 -\chi(Q^{(1, m)})\right). 
\end{align}
On the other hand, since the $T^G/\mathbb{G}_m$-fixed 
points in $Q^{(2, 2m)}_2$ coincides with $\Imm \iota$, 
the localization implies 
\begin{align}\label{locali}
\chi(Q^{(2, 2m)}_2)=\chi(Q^{(1, m)})^2 -\chi(Q^{(1, m)}).
\end{align}
By (\ref{e2written}), (\ref{EuQ}) and (\ref{locali}), 
we obtain $\chi(\epsilon_2, 1)=0$. 
\end{proof}
\begin{lem}\label{lem:e3}
The element $\epsilon_{3} \in \hH(\aA_X)$
is written as (\ref{Ui2}) such that 
\begin{align}\label{chie3}
\chi(\epsilon_3, 1)\equiv \frac{m}{2}\chi(Q^{(1, m)}),
 \quad (\mbox{ \rm{mod} }\mathbb{Z} \ ). 
\end{align}
\end{lem}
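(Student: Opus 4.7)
The plan is to identify the geometric structure of $Q^{(2,2m)}_3$ and reduce $\chi(\epsilon_3, 1)$ to an explicit integral over $Q^{(1,m)}$. For $p \in Q^{(2,2m)}_3$, the object $E_p$ fits in a non-split sequence $0 \to E \to E_p \to E \to 0$ with $E$ a simple $Z_+$-stable object of class $(1,m)$. Simplicity of $E$ forces $\End(E_p) \cong \mathbb{C}[t]/(t^2)$, so $\Aut(E_p) \cong \mathbb{G}_m \times \mathbb{G}_a$ embeds into $\GL(2, \mathbb{C})$ as the upper triangular subgroup with equal diagonal entries, after choosing a frame adapted to the filtration. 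Consequently $T^G \cap \Aut(E_p)$ equals the scalar subgroup $\mathbb{G}_m \subset T^G$ of (\ref{diag}); so the scalars act trivially on $Q^{(2,2m)}_3$ while $T^G/\mathbb{G}_m$ acts freely. Setting $\widetilde{Q}^{(2,2m)}_3 := Q^{(2,2m)}_3/(T^G/\mathbb{G}_m)$, the free $\mathbb{G}_m$-action yields $\chi(Q^{(2,2m)}_3) = 0$.

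Applying the relation (\ref{relation}) gives $\chi(\delta_3, 1) = \tfrac{1}{2}\chi(\widetilde{Q}^{(2,2m)}_3)$. A direct computation with Joyce's formula shows $F(\mathbb{A}^1 \times \mathbb{G}_m, \mathbb{G}_m, \mathbb{G}_m) = 1$, so that $\chi(\widetilde{\delta}_3, 1) = \int_{E \in Q^{(1,m)}} \dim \Ext^1(E,E)\, d\chi$. To compute $\chi(\widetilde{Q}^{(2,2m)}_3)$, I construct the natural map $\gamma : \widetilde{Q}^{(2,2m)}_3 \to M$ onto the moduli space $M$ of isomorphism classes of non-split self-extensions; $M$ is constructibly a $\mathbb{P}(\Ext^1(E,E))$-bundle over $Q^{(1,m)}$, so $\chi(M) = \int \dim \Ext^1(E,E)\, d\chi$. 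The fiber of $\gamma$ over $[E_p]$ is the free double coset $T^G \backslash \GL(2,\mathbb{C}) / \Aut(E_p)$, which upon quotienting by the scalars becomes $\mathbb{G}_m \backslash \PGL(2,\mathbb{C}) / \mathbb{G}_a$ with virtual Poincar\'e polynomial $(t^4 - 1)/(t^2 - 1) = t^2 + 1$, hence Euler characteristic $2$. Combining, $\chi(\widetilde{Q}^{(2,2m)}_3) = 2 \int \dim \Ext^1(E,E)\, d\chi$, and
\[
\chi(\epsilon_3, 1) = \tfrac{1}{2}\chi(\widetilde{Q}^{(2,2m)}_3) - \tfrac{1}{2}\chi(\widetilde{\delta}_3, 1) = \tfrac{1}{2}\int_{E \in Q^{(1,m)}} \dim \Ext^1(E, E)\, d\chi.
\]

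The required congruence thus reduces to $\int \dim \Ext^1(E, E)\, d\chi \equiv m\, \chi(Q^{(1,m)}) \pmod{2}$, which in turn follows from the pointwise parity $\dim \Ext^1(\mathcal{I}_Z, \mathcal{I}_Z) \equiv m \pmod{2}$ for every length-$m$ subscheme $Z \subset X$. On a Calabi-Yau 3-fold the long exact sequence associated to $0 \to \mathcal{I}_Z \to \oO_X \to \oO_Z \to 0$, together with $H^1(\oO_X)=0$, identifies $\dim \Ext^1(\mathcal{I}_Z, \mathcal{I}_Z) = \dim \Hom(\mathcal{I}_Z, \oO_Z)$, and the latter decomposes into local contributions at each point of $\Supp Z$. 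The parity is immediate for reduced $Z$ (where $\dim \Ext^1 = 3m$); the main technical obstacle will be establishing it in full generality at the non-reduced strata via a local analysis of the conormal structure of Artinian quotients of the regular local ring $\oO_{X, x}$.
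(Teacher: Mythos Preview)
Your approach is essentially identical to the paper's through the computation
\[
\chi(\epsilon_3,1)=\tfrac{1}{2}\int_{p\in Q^{(1,m)}}\dim\Ext^1(E_p,E_p)\,d\chi,
\]
including the description of $\Aut(E_p)\cong\mathbb{A}^1\rtimes\mathbb{G}_m$, the free $T^G/\mathbb{G}_m$-action, the use of the relation (\ref{relation}), and the fiberwise virtual Poincar\'e computation giving the factor $2$. Your identification $\dim\Ext^1(\mathcal{I}_Z,\mathcal{I}_Z)=\dim\Hom(\mathcal{I}_Z,\oO_Z)$ is correct as well.

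The one genuine gap is the last step. You reduce to the pointwise parity $\dim\Hom(\mathcal{I}_Z,\oO_Z)\equiv m\pmod 2$ and then explicitly leave the non-reduced case as ``the main technical obstacle''. This is not a minor leftover: the reduced locus does not carry the whole Euler characteristic, and the non-lci strata (already at $m=4$, e.g.\ $Z=V(\mathfrak{m}^2)$) are exactly where the computation is nontrivial. The paper does not attempt a direct local analysis here; instead it invokes the argument of Behrend--Fantechi \cite[Theorem~4.11]{BBr}, which yields the integral identity
\[
\int_{p\in Q^{(1,m)}}(-1)^{\dim\Ext^1(E_p,E_p)}\,d\chi=(-1)^m\chi(Q^{(1,m)}),
\]
and hence the congruence immediately. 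That argument proceeds by torus localization on the local model $\Hilb^m(\mathbb{C}^3)$ and a parity check at monomial ideals, so it handles all strata uniformly. You should close your gap the same way rather than via a case-by-case conormal analysis, which would amount to reproving the Behrend--Fantechi result from scratch.
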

\begin{proof}
For a point $p\in 
Q_3^{(2, 2m)}$, the object $E_p \in \aA_X$
satisfies 
\begin{align*}
\Aut(E)= \Stab_{p}(\GL(2, \mathbb{C}))\cong 
\mathbb{A}^1 \rtimes \mathbb{G}_m,
\end{align*}
since $E_p$ fits into the exact sequence
(\ref{E_12}) with $E_1 \cong E_2$.
Then for the diagonal matrices
 $T^G \subset \GL(2, \mathbb{C})$,
we have $\Stab_{p}(\GL(2, \mathbb{C}))\cap T^G$
is the subgroup $\mathbb{G}_m \subset T^G$
given by (\ref{diag}). 
Therefore the action of $T^G$ on 
$Q_3^{(2, 2m)}$ descends to the free action 
of $T^G/\mathbb{G}_m \cong \mathbb{G}_m$. 
We set $\widetilde{Q}_3^{(2, 2m)}$
to be the quotient algebraic space, 
\begin{align*}
\widetilde{Q}_3^{(2, 2m)}
=Q_3^{(2, 2m)}/(T^G/\mathbb{G}_m).
\end{align*}
Using (\ref{relation}) and the relation (\ref{relHall}), we have 
\begin{align}\label{form:e3}
\epsilon_{3}=\frac{1}{2}\left[\left[\frac{\widetilde{Q}_3^{(2, 2m)}}
{\mathbb{G}_m}\right]
\to \oO bj(\aA_X) \right]
-\frac{3}{4}\left[\left[\frac{Q_3^{(2, 2m)}}
{\mathbb{G}_m}\right]
\to \oO bj(\aA_X) \right] \\
-\frac{1}{2}\int_{p\in Q^{(1, m)}}\left[ \left[
\frac{\mathbb{P}(\Ext^1(E_p, E_p))}{\mathbb{G}_m} \right] \to \oO bj(\aA_X)\right]. \end{align}
Here the algebraic groups in the denominators 
act on the varieties in the numerators trivially. 
Therefore $\epsilon_3$ is written as (\ref{Ui2}).
Let us calculate the Euler 
characteristic of $\widetilde{Q}_3^{(2, 2m)}$. 
For $p\in Q_3^{(2, 2m)}$, 
let $\gamma(p)\in Q^{(1, m)}$ be the point 
such that $E_p$ fits into the exact sequence (\ref{E_12})
with $E_1\cong E_{\gamma(p)}$. 
It is easy to see that $p \mapsto \gamma(p)$
is a well-defined morphism of 
varieties,  
\begin{align*}
\gamma \colon Q_3^{(2, 2m)} \to Q^{(1, m)}.
\end{align*} 
For $p'\in Q^{(1, m)}$, the fiber of $\gamma$ at $p'$
carries a surjection, 
\begin{align*}
\gamma' \colon \gamma^{-1}(p') 
\twoheadrightarrow \Ext^1(E_{p'}, E_{p'})\setminus \{0\}, 
\end{align*}
which sends a point $p\in \gamma^{-1}(p')$ to the 
extension class of (\ref{E_12}). 
For $u\in \Ext^1(E_{p'}, E_{p'})\setminus \{0\}$, 
we have the surjective morphism, 
\begin{align*}
\gamma''\colon \GL(2, \mathbb{C}) \twoheadrightarrow \gamma^{'-1}(u), 
\end{align*}
induced by the $\GL(2, \mathbb{C})$-action
on $Q^{(2, 2m)}$. 
Each fiber of $\gamma''$ is isomorphic to 
the special algebraic group $\mathbb{A}^1 \rtimes \mathbb{G}_m$, 
hence $\gamma''$ is Zariski locally trivial. 
The free $T^G/\mathbb{G}_m$-action on $Q_{3}^{(2, 2m)}$ restricts
to the free $T^G/\mathbb{G}_m \cong \mathbb{G}_m$-action on 
$\gamma^{'-1}(u)$, and the virtual Poincar\'e polynomial 
of the quotient space is 
\begin{align}\notag
P_t(\gamma^{'-1}(u)/\mathbb{G}_m)&=
\frac{P_t(\GL(2, \mathbb{C}))}{P_t(\mathbb{A}^1 \rtimes \mathbb{G}_m)
P_t(T^G/\mathbb{G}_m)} \\
\label{Ptgamm}
&=t^2 +1.
\end{align}
Now $\gamma'$ descends to a morphism 
$$\gamma^{-1}(E)/\mathbb{G}_m \to \mathbb{P}(\Ext^1(E, E)),$$
such that the Euler characteristic of each fiber is equal to 
$P_t(\gamma^{'-1}(u)/\mathbb{G}_m)|_{t=1}=2$ by (\ref{Ptgamm}). 
Therefore $\chi(\widetilde{Q}_3^{(2, 2m)})$
is 
\begin{align}\label{chiQ}
\chi(\widetilde{Q}_3^{(2, 2m)})=2\int_{p\in Q^{(1, m)}}
\dim \Ext^1(E_p, E_p) \ d\chi. 
\end{align}
Since $\mathbb{G}_m$ acts on $Q_3^{(2, 2m)}$ freely, 
we have $\chi(Q_3^{(2, 2m)})=0$. 
By (\ref{form:e3}) and (\ref{chiQ}), we have 
\begin{align}\label{int:ext}
\chi(\epsilon_3, 1)=\frac{1}{2}\int_{p\in Q^{(1, m)}}
\dim \Ext^1(E_p, E_p) \ d\chi. 
\end{align}
On the other hand, 
the same argument of~\cite[Theorem~4.11]{BBr} shows that 
\begin{align}\label{int:ext2}
\int_{p\in Q^{(1, m)}}
(-1)^{\dim \Ext^1(E_p, E_p)} \ d\chi =(-1)^m \chi(Q^{1, m}). 
\end{align}
By (\ref{int:ext}) and (\ref{int:ext2}), we obtain (\ref{chie3}). 
\end{proof}
\begin{lem}\label{lem:e4}
The element $\epsilon_4 \in \hH(\aA_X)$ 
is written as (\ref{Ui2}) and we have 
\begin{align*}
\chi(\epsilon_4, 1)=-\frac{1}{4}\chi(Q^{(1, m)}).
\end{align*}
\end{lem}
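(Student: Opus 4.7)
The strategy parallels Lemmas~\ref{lem:e1}, \ref{lem:delta2}, and \ref{lem:e3}: describe the stratum $Q_4^{(2,2m)}$ geometrically, rewrite $\delta_4$ and $\widetilde{\delta}_4$ in the canonical form (\ref{Ui2}) by applying the Hall-algebra relations, and combine.

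First I would observe that for any $p\in Q_4^{(2,2m)}$, the object $E_p\cong E_1^{\oplus 2}$ satisfies $\Aut(E_p)=\GL(2,\mathbb{C})$, since $E_1$ is $Z_+$-stable and so $\Aut(E_1)=\mathbb{G}_m$. Thus the entire group $\GL(2,\mathbb{C})$ stabilizes every point of $Q_4^{(2,2m)}$, and the natural morphism $\iota\colon Q^{(1,m)}\to Q_4^{(2,2m)}$ sending $s_1\colon\oO_X\twoheadrightarrow F_1$ to the diagonal quotient $\mathrm{diag}(s_1,s_1)\colon\oO_X^{\oplus 2}\twoheadrightarrow F_1^{\oplus 2}$ is a bijection onto the set of $\GL(2,\mathbb{C})$-orbits. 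Consequently $\chi(Q_4^{(2,2m)})=\chi(Q^{(1,m)})$, and by (\ref{relation}) together with Example~\ref{exam:F},
\begin{align*}
\delta_4=\frac{1}{2}\left[\left[\frac{Q^{(1,m)}}{\mathbb{G}_m^2}\right]\to\oO bj(\aA_X)\right]-\frac{3}{4}\left[\left[\frac{Q^{(1,m)}}{\mathbb{G}_m}\right]\to\oO bj(\aA_X)\right],
\end{align*}
with trivial actions on $Q^{(1,m)}$.

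The main step will be to put $\widetilde{\delta}_4=[[Q^{(1,m)}/(\mathbb{A}^1\rtimes\mathbb{G}_m^2)]\to\oO bj(\aA_X)]$ into the same canonical form using the relation (\ref{relHall}). Note that $B\cneq\mathbb{A}^1\rtimes\mathbb{G}_m^2$ is precisely the Borel subgroup of upper-triangular matrices in $\GL(2,\mathbb{C})$, with maximal torus $T^B=\mathbb{G}_m^2$. A direct computation shows $\qQ(B,T^B)=\{T^B,\mathbb{G}_m^{\mathrm{diag}}\}$, so $B$ is very special (both subgroups are tori, hence special). However, because the nontrivial Weyl representative of $\GL(2,\mathbb{C})$ does not lie in $B$, one has $N_B(T^B)=T^B$ (in contrast with the $\GL(2,\mathbb{C})$-case of Example~\ref{exam:F}); evaluating the limit $t\to 1$ in the defining sum for $F(B,T^B,\ast)$, using $P_t(B)=t^2(t^2-1)^2$ and $P_t(\mathbb{G}_m^{\mathrm{diag}})=t^2-1$, yields
\begin{align*}
F(B,T^B,T^B)=1,\qquad F(B,T^B,\mathbb{G}_m^{\mathrm{diag}})=-1,
\end{align*}
and consequently
\begin{align*}
\widetilde{\delta}_4=\left[\left[\frac{Q^{(1,m)}}{\mathbb{G}_m^2}\right]\to\oO bj(\aA_X)\right]-\left[\left[\frac{Q^{(1,m)}}{\mathbb{G}_m}\right]\to\oO bj(\aA_X)\right].
\end{align*}

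Finally, assembling $\epsilon_4=\delta_4-\tfrac12\widetilde{\delta}_4$, the $\mathbb{G}_m^2$-coefficients cancel exactly ($\tfrac12-\tfrac12=0$), while the $\mathbb{G}_m$-coefficient becomes $-\tfrac34+\tfrac12=-\tfrac14$, so
\begin{align*}
\epsilon_4=-\frac{1}{4}\left[\left[\frac{Q^{(1,m)}}{\mathbb{G}_m}\right]\to\oO bj(\aA_X)\right],
\end{align*}
which is already of the form~(\ref{Ui2}), giving $\chi(\epsilon_4,1)=-\tfrac14\chi(Q^{(1,m)})$. I expect the main obstacle to be the $F$-coefficient computation for the non-reductive group $B$, a case not treated by Example~\ref{exam:F}, and justifying the crucial cancellation of the $\mathbb{G}_m^2$-terms, which occurs precisely because the normalizer factor $|N_G(T^G)/C_G(S')\cap N_G(T^G)|$ drops out for the Borel.
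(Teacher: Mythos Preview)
Your proposal is correct and follows essentially the same route as the paper: the paper likewise identifies $Q_4^{(2,2m)}$ with $Q^{(1,m)}$, applies (\ref{relation}) to rewrite $\delta_4$, computes $F(G,T^G,T^G)=1$ and $F(G,T^G,\mathbb{G}_m)=-1$ for $G=\mathbb{A}^1\rtimes\mathbb{G}_m^2$ to rewrite $\widetilde{\delta}_4$, and obtains the same cancellation $\epsilon_4=-\tfrac14[[Q^{(1,m)}/\mathbb{G}_m]\to\oO bj(\aA_X)]$. Your write-up simply supplies more detail on the stabilizer analysis and the $F$-coefficient computation for the Borel, which the paper states without justification.
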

\begin{proof}
By (\ref{relation})
and noting that $F(G, T^G, T^G)=1$, $F(G, T^G, \mathbb{G}_m)=-1$
for $G=\mathbb{A}^1 \rtimes \mathbb{G}_m^2$, 
$T^G=\{0\}\times \mathbb{G}_m^2$ and
$\mathbb{G}_m \subset T^G$ given by (\ref{diag}), 
we have 
\begin{align*}
\epsilon_4 &=
\frac{1}{2}\left[\left[ \frac{Q^{(1, m)}}{\mathbb{G}_m^2}
 \right] \to \oO bj(\aA_X) \right] 
 -\frac{3}{4}\left[\left[ \frac{Q^{(1, m)}}{\mathbb{G}_m}
 \right] \to \oO bj(\aA_X) \right] \\
 &-\frac{1}{2}\left[\left[ \frac{Q^{(1, m)}}{\mathbb{G}_m^2}
 \right] \to \oO bj(\aA_X) \right]
 +\frac{1}{2}\left[\left[ \frac{Q^{(1, m)}}{\mathbb{G}_m}
 \right] \to \oO bj(\aA_X) \right]  \\
 &=-\frac{1}{4}\left[\left[ \frac{Q^{(1, m)}}{\mathbb{G}_m}
 \right] \to \oO bj(\aA_X) \right]. 
\end{align*}
Here the algebraic groups in the denominators act on 
the varieties in the numerators trivially. The above 
formula immediately imply the result. 
\end{proof}
{\bf Proof of Theorem~\ref{thm:intconj}:}
\begin{proof}
By (\ref{OmegaE}), Lemma~\ref{lem:delta0}, Lemma~\ref{lem:e1},
Lemma~\ref{lem:delta2},
Lemma~\ref{lem:e3}
and Lemma~\ref{lem:e4}, 
we obtain 
\begin{align*}
\Omega(2, 2m)&\equiv -\frac{\chi(Q^{(1, m)})}{4}
\{2m-1+(-1)^{m}\} \quad (\mbox{ mod }\mathbb{Z} \ ) \\
&\equiv 0 \quad (\mbox{ mod }\mathbb{Z} \ ). 
\end{align*}
\end{proof}

Institute for the Physics and 
Mathematics of the Universe, University of Tokyo

\textit{E-mail address}:toda-914@pj9.so-net.ne.jp, 
yukinobu.toda@ipmu.jp

\end{document}